%%%  26.09.2016
\documentclass[reqno]{amsart}
\usepackage{amsmath,amssymb,cite}
\usepackage[mathscr]{euscript}
%\usepackage[notcite,notref]{showkeys}
%\renewcommand{\baselinestretch}{1.67} % Double spacing

%%% NEWTHEOREMS
\theoremstyle{plain}
\newtheorem{theorem}{Theorem}[section]
\newtheorem{proposition}[theorem]{Proposition}
\newtheorem{lemma}[theorem]{Lemma}

\theoremstyle{definition}

\newtheorem{assumption}[theorem]{Assumption}

\theoremstyle{remark}

%%%%%RESTRICTION

%%% NUMBERING
\numberwithin{equation}{section}
\numberwithin{theorem}{section}

%%% OTHER COMMANDS
\def\be{\begin{equation}}
\def\ee{\end{equation}}
\def\bp{\begin{pmatrix}}
\def\ep{\end{pmatrix}}
\def\bea{\begin{eqnarray}}
\def\eea{\end{eqnarray}}

\def\\{\par\medskip}

\let\0=\noindent

%%%%%TYPEFACE
\newcommand{\mc}[1]{{\mathcal #1}}

\newcommand{\bs}[1]{{\boldsymbol #1}}
\newcommand{\bb}[1]{{\mathbb #1}}

\newcommand{\rmi}{\mathrm{i}}
\newcommand{\rmd}{\mathrm{d}}

\let\eps=\varepsilon

%%%%%OVERLINE-UNDERLINE IN MATHMODE

%%%%%ID
\newcommand{\id}{{1 \mskip -5mu {\rm I}}}
%%%%%epsilon
\renewcommand{\eps}{\varepsilon}
%%%%migliorie?
\renewcommand{\hat}{\widehat}

%%vari

% sanita\ mentale

\title{Stochastic Allen-Cahn equation with mobility}

\author[L.\ Bertini]{Lorenzo Bertini}
\address{Lorenzo Bertini \hfill\break \indent
   Dipartimento di Matematica, 
   Universit\`a di Roma `La Sapienza' 
   \hfill\break \indent
   P.le Aldo Moro 5, 00185 Roma, Italy}
 \email{bertini@mat.uniroma1.it}
\author[P.\ Butt\`a]{Paolo Butt\`a}
\address{Paolo Butt\`a\hfill\break \indent
   Dipartimento di Matematica, 
   Universit\`a di Roma `La Sapienza' 
   \hfill\break \indent
   P.le Aldo Moro 5, 00185 Roma, Italy}
 \email{butta@mat.uniroma1.it}
\author[A.\ Pisante]{Adriano Pisante}
\address{Adriano Pisante \hfill\break \indent
   Dipartimento di Matematica, 
   Universit\`a di Roma `La Sapienza' 
   \hfill\break \indent
   P.le Aldo Moro 5, 00185 Roma, Italy}
\email{pisante@mat.uniroma1.it}

\begin{document}

\begin{abstract}
  We introduce a class of stochastic Allen-Cahn equations with a
  mobility coefficient and colored noise. For initial data with finite
  free energy, we analyze the corresponding Cauchy problem on the
  $d$-dimensional torus in the time interval $[0,T]$. Assuming that
  $d\le 3$ and that the potential has quartic growth, we prove
  existence and uniqueness of the solution as a process $u$ in $L^2$
  with continuous paths, satisfying almost surely the regularity
  properties $u\in C([0,T]; H^1)$ and $u\in L^2([0,T];H^2)$.
\end{abstract}

\keywords{Stochastic PDEs, Allen-Cahn equation, Well-posedness}
\thanks{The present work was financially supported  by  PRIN 20155PAWZB ``Large Scale Random Structures''. }

\maketitle
\thispagestyle{empty}
 
\section{Introduction}
\label{sec:1}

The analysis of stochastic perturbations of the Allen-Cahn equation,
due to their relevance both from a theoretical and applied viewpoint,
has been a main topic in the development of the theory of stochastic
partial differential equations. We consider the case in which the
space variable belongs to the $d$-dimensional torus $\bb T^d :=\bb
R^d/ \bb Z^d$. The typical setting is the following. Fix a smooth
double well potential $W\colon \bb R\to \bb R$ and a filtered
probability space equipped with a cylindrical Wiener process
$\alpha$. A class of stochastic perturbations of the Allen-Cahn
equation is then given by
\begin{equation}
\label{i.1}
\rmd u_t = \big( \Delta u_t - W' (u_t) \big) \rmd t + \sqrt{2} j*\rmd\alpha_t \;.  
\end{equation}
Here, the unknown $u=u_t(x)$, $(t,x)\in[0,T]\times \bb T^d$, $T>0$, is
real-valued and it represents the local order parameter, $\Delta$ is
the Laplacian, $j=j(x) \colon \bb T^d \to \bb R$, and $*$ denotes
convolution in the space variable. The case of perturbation by
space-time white noise is formally recovered when $j$ is the Dirac's
delta function.

The so-called semigroup approach \cite{DaZ} to the analysis of the
stochastic Allen-Cahn consists in writing the Cauchy problem with
initial datum $\bar u_0$ associated to \eqref{i.1} in the mild form,
i.e.,
\begin{equation}
  \label{i:mf}
  u_t = e^{t\Delta} \bar u_0 - \int_0^t\! e^{(t-s)\Delta} W'(u_s) \, \rmd s 
  + \sqrt{2}\int_0^t\! e^{(t-s)\Delta} j* \rmd\alpha_s\;,
\end{equation}
where $e^{t\Delta}$ denotes the heat semigroup. In the one-dimensional
case with $j=\delta$ or in $d>1$ with $j$ smooth enough, the last term
on the right-hand side of \eqref{i:mf} is, with probability one, a
process in $C(\bb T^d)$ with continuos paths. By a fixed point
argument in $C([0,T];C(\bb T^d))$, it is then possible to prove
existence and uniqueness to \eqref{i:mf}, for almost all realizations
of the noise, see, e.g., \cite{Cerrai}, where a more general setting is
considered. When $W'$ is Lipschitz, this approach applies also to the
case in which the state space is $L^2(\bb T^d)$ instead of $C(\bb
T^d)$, and the same holds even when $W'$ has polynomial growth, relying on the one-side Liptschitz property of $W'$ \cite{DaZ,CD}.

Considering still the case with $W'$ Lipschitz and with the same
restrictions on $j$, the stochastic Allen-Cahn equation \eqref{i.1}
can be also analyzed using the so-called variational approach
\cite{KR,PrRo}. This approach relies on the embeddings $H^1(\bb T^d)
\subset L^2(\bb T^d) \subset H^{-1}(\bb T^d)$, the Cauchy problem
associated to \eqref{i.1} is then understood as the following equality
in $H^{-1}(\bb T^d)$,
 \begin{equation}
 \label{i:vf}
 u_t= \bar{u}_0 + \int_0^t\! \big[\Delta u_s - W'(u_s)\big] \, \rmd s +  \sqrt{2} j * \alpha_t \;.
 \end{equation}
The main step for existence is an It\^o's formula for the map $u\mapsto \|u\|_{L^2(\bb T^d)}^2$, which yields the a priori bounds needed to construct the solution $u$, by compactness arguments, as a process in $L^2(\bb T^d)$ with continuous paths and $u\in L^2([0,T];H^1(\bb T^d))$ with probability one. More recently, in \cite{LR} the variational approach has been extended to the case of $W'$ with some polynomial growth again in view of the one-side Liptschitz property.   

Approximation of the Allen-Cahn equation \eqref{i.1} by time discretization has been considered in \cite{KLL}, in terms of the backward Euler scheme; indeed, as the time step goes to zero, this method recovers the unique solution discussed above. Similar time and space discretization of \eqref{i.1} were previously investigated first in, e.g., \cite{H} under Lipschitz assumption on the nonlinear term and extended in \cite{J} when $W'$ has polynomial growth.   

In the case of perturbation by space-time white noise, $j=\delta$, and $d>1$, the last term in the right-hand side of \eqref{i:mf} is, with probability one, only a distribution and the well-posedness of the stochastic perturbation of the Allen-Cahn equation becomes a major issue.  In particular, to make sense of the equation a proper renormalization of the non linear term $W'$ is needed.  In dimension $d=2$, when $W$ is a polynomial, this renormalization amounts to the Wick ordering \cite{AR,DD,JM}. In dimension $d=3$, the renormalization of the non linearity is more involved; for a quartic potential $W$, a local existence and uniqueness result is proven in \cite{Hairer}, and it has been extended in \cite{MW3} to arbitrary time intervals.

Regarding the choice of the random forcing term in \eqref{i.1}, we would like to make the following model remark. The choice of the space-time white noise has the doubtless appeal of simplicity and universality, and it is really mandatory when \eqref{i.1} is used in the framework of stochastic quantization or to model dynamical critical fluctuations \cite{HH}. In the latter case, the potential $W$ is not arbitrary but the quartic potential. Indeed, as shown in \cite{BPRS,FR} for $d=1$ and in \cite{MW2} for $d=2$, with these choices \eqref{i.1} describes the asymptotic of the fluctuations at the critical point for a Glauber dynamics with local mean field interaction. On the other hand, if we regard \eqref{i.1} as a phenomenological model for phase segregation and interface dynamics, the choice of a noise with nonzero spatial correlation length, i.e., a smooth $j$, is not unsound since we are going to look at the order parameter on larger space scales. Analogously, any reasonable double well potential $W$ will yield essentially the same limiting behavior.
 
The deterministic Allen-Cahn equation, i.e., \eqref{i.1} with $j=0$, can be viewed as the $L^2$-gradient flow of the van der Waals free energy functional,
\begin{equation}
\label{IntF}
\mc F (u) := \int\!\Big[ \frac12 |\nabla u|^2 + W(u) \Big]\,\rmd x\;.
\end{equation}
Correspondingly, in the case when $j$ is the Dirac's delta function, the process $u$ is (informally) reversible with respect to the (informal) probability measure $P(\mc D u)\propto \exp\big\{ -\mc F(u)\big\}\mc D u$.

With respect to the setting described above, in this paper we analyze a stochastic Allen-Cahn equation in which we introduce a mobility coefficient, that is,
\begin{equation}
\label{i.2}
\rmd u_t = \sigma(u_t) \big(  \Delta u_t - W' (u_t) \big) \rmd t  +
\sqrt{2 \sigma(u_t)}\, j*\rmd \alpha_t\;, 
\end{equation}
where the \emph{mobility} $\sigma\colon \bb R\to \bb R_+$ is smooth, bounded, and uniformly strictly positive. Moreover, $W$ is convex at infinity with at most quartic growth. In terms of gradient flows, \eqref{i.2} with $j=0$ is the gradient flow of $\mc F$ in $L^2(\sigma(u)^{-1} \rmd x)$. Finally, the choice of the random forcing term in \eqref{i.2} is suggested by the case of constant mobility. Indeed, when $\sigma$ is constant and $j$ is the Dirac's delta function, the process $u$ is still (informally) reversible with respect to the (informal) probability $P(\mc D u)\propto \exp\big\{ -\mc F(u)\big\}\mc D u$ regardless of the specific value of $\sigma$. In the physical literature, see e.g., \cite[Sect.\ IV.A.1]{HH} or \cite[Sect.\ II.7.3]{Spohn}, this choice is usually referred to as the Onsager's prescription.

A motivation for the introduction of the mobility in the Allen-Cahn
equation relies in the analysis of the corresponding sharp interface
limits. For instance, as well known, for suitably prepared initial
data, in this singular limit the deterministic Allen-Cahn equation
(with constant mobility) converges to the motion by mean curvature,
see e.g. \cite{ESS, Ilmanen}. As discussed in \cite[\S~4]{S}, this
approximation to motion by mean curvature has the peculiar feature of
exhibiting a trivial transport coefficient in the limiting
evolution. On the other hand, when a non-constant mobility coefficient
is introduced as in \eqref{i.2}, we expect that the limiting interface
evolution is described by motion by mean curvature with a non-trivial
transport coefficient satisfying the corresponding Einstein's relation
\cite[\S~3]{S} (see \cite{DOPT, KS} for the case of a non-local
equation). As far as the stochastic Allen-Cahn equation is considered,
a relevant issue is the large deviation asymptotics in such sharp
interface limit. In the case of constant mobility, this analysis is
carried out in \cite{BBP2}, see also the related discussion in \cite{KORV}. 

To our knowledge, the stochastic Allen-Cahn equation with mobility has
not been discussed in the literature. 
In this paper, we consider the Cauchy problem associated to \eqref{i.2} with
initial datum $\bar u_0\in H^1(\bb T^d)$ when $d\le 3$, the potential 
$W$ is convex at infinity with at most quartic growth, and $j$
belongs to the Sobolev space $H^1(\bb T^d)$.
We prove the existence and uniqueness of the solution as a process $u$
in $L^2(\bb T^d)$ with continuous paths satisfying $u\in C([0,T];
H^1(\bb T^d))\cap L^2([0,T];H^2(\bb T^d))$ almost surely and such that the corresponding norms are random variables whose moments are all finite.

The semigroup approach does not seem to be applicable to equation \eqref{i.2}, first because it cannot be recasted in a mild form in terms of a linear semigroup (the diffusion term is now nonlinear), but also because the reaction term $-\sigma W'$ no longer satisfies the one-side Lipschitz property. On the other hand, our result seems difficult to obtain by the variational approach discussed above even in the case of constant mobility,  
%with both the Gelfand triples $H^1(\bb T^d)  \subset L^2(\bb T^d)\subset H^{-1}(\bb T^d)$ and $H^2(\bb T^d) \subset H^1(\bb T^d) \subset L^2(\bb T^d)$,
 see the discussion at the end of Section \ref{sec:2}. 

The restriction $d\le 3$ is connected to the quartic growth of the
potential, allowing to control some non-linear terms via Sobolev
embeddings. The choice of periodic boundary conditions does simplify
computations, but the arguments here presented are robust enough to be
adapted to the case of a bounded domain with either Dirichlet or
Neumann boundary conditions.

%From a technical viewpoint, existence of solutions to \eqref{i.2} will be proven by a compactness argument on suitable approximate solutions in the same spirit of the variational approach. More precisely, as in \cite{Mariani}, such approximate solutions are constructed by freezing the mobility coefficient and truncating the nonlinear term. The necessary a-priori bounds are obtained, taking full advantage of the variational structure of the equation, by deriving an It\^o's formula for the map $u\mapsto \mc F(u)$ defined in \eqref{IntF}. Uniqueness will be achieved by an $H^{-1}$ estimate inspired by the one in \cite{AL} for similar deterministic evolution equations, together with a Yamada-Watanabe type argument. 
From a technical viewpoint, existence of solutions to \eqref{i.2} will be proven by a compactness argument on suitable approximate solutions in the same spirit of the variational approach. More precisely, the approximate solutions are constructed by time discretization of the mobility coefficient and regularizing the nonlinear term. The necessary a-priori bounds are obtained, taking full advantage of the variational structure of the equation, by deriving an It\^o's formula for suitable approximations of the map $u\mapsto \mc F(u)$ defined in \eqref{IntF}. Uniqueness will be achieved by an $H^{-1}$ estimate inspired by the one in \cite{AL} for similar deterministic evolution equations, together with a Yamada-Watanabe type argument. 
 
\section{Notation and results}
\label{sec:2}

Throughout this paper we shall shorthand $L^p=L^p(\bb T^d)$, $p\in [1,+\infty]$, and let $H^s=H^s(\bb T^d)$, $s\in \bb R$, be the fractional Sobolev space. Moreover, given $T>0$ we also shorthand $C(L^p)=C([0,T];L^p)$, $C(H^s)= C([0,T];H^s)$, and $L^p(H^s)= L^p([0,T];H^s)$. 

We consider the following stochastic partial differential equation, 
\begin{equation}
\label{1}
\rmd u = \sigma(u) \left[\Delta u -  W'(u) \right]\rmd t  + \rmd M\;, 
\end{equation}
where, for $\varphi\in L^2 $, $M^\varphi_t:=\langle M_t,\varphi\rangle_{L^2}$, $t\ge 0$, is a continuous square integrable martingale with quadratic variation,
\begin{equation}
\label{2}
\big[M^\varphi\big]_t = 2 t \int\! \big[j *(\sqrt{\sigma(u)}\,\varphi)\big]^2 \, \rmd x \;. 
\end{equation}
Here $j \in H^1$ is a fixed function, $*$ denotes the convolution on $\bb T^d$, and the following conditions on the potential $W$ and the mobility $\sigma$ are assumed to hold.

\begin{assumption}[Assumptions on $W$ and $\sigma$]$~$
\label{t:ws}
\begin{enumerate} 
\item 
  $W\in C^2\big(\bb R ;[0,+\infty)\big)$ and $W$ is uniformly convex at infinity, i.e.,
  there exists a constant $C\in (0,+\infty)$ and a compact $K\subset
  \bb R$ such that $W''(u) \ge \frac{1}{C}$ for any $u\not\in K$.
\item $W$ has at most growth $4$, i.e., there
  exists a constant $C\in (0,+\infty)$ such that $|W(u)|\le C
  (|u|^4+1)$ for any $u\in \bb R$. 
\item $W'$ has at most growth $3$, i.e., there
  exists a constant $C\in (0,+\infty)$ such that $|W'(u)|\le C
  (|u|^{3}+1)$ for any $u\in \bb R$. 
\item There
  exists a constant $C\in (0,+\infty)$ such that 
  $|W''(u)|\le C (\sqrt{W(u)}+1)$ for any $u\in \bb R$. 
\item $\sigma\in C^2(\bb R)$, $\sigma$ is bounded and
  uniformly strictly positive, i.e., there exists a constant $C\in
  (0,+\infty)$ such that $\frac 1C \le \sigma(u)\le C$ for any $u\in
  \bb R$.
\item $\sigma'$, $\sigma''$ are bounded.% and $\displaystyle \lim_{|u|\to +\infty} \frac{ \sigma'(u) W'(u)}{W''(u)} =0$.
\end{enumerate}
\end{assumption}

We prove the existence and uniqueness of the Cauchy problem associated to \eqref{1} with a deterministic initial datum $\bar u_0 \in H^1$ in space dimensions $d\le 3$. To formulate the precise result we introduce two different notions of solution.

Given $T>0$, we consider $C(L^2) \equiv C([0,T];L^2)$, endowed with the norm topology, the associated Borel $\sigma$-algebra $\mc B$, and the canonical filtration $\mc B_t$, $t\in [0,T]$. The canonical coordinate on $C(L^2)$ is denote by $u=(u_t)_{t\in[0,T]}$.

Given $\bar u_0\in H^1$, a probability $\bb P$ on $C(L^2)$ solves the \emph{martingale problem} associated to \eqref{1} with initial datum $\bar u_0$ iff $\bb P (u_0=\bar u_0)=1$, $\bb P (u\in L^\infty(H^1) \cap L^2(H^2))=1$, and for each $\psi\in C^\infty([0,T]\times \bb T^d)$ the process,
\begin{equation}
\label{a0}
M^\psi_t := \int\! u_t \psi_t  \, \rmd x -\int\! u_0 \psi_0 \, \rmd x - \int_0^t\! \int\! \Big[u_s \partial_s\psi_s + \sigma(u_s)\Big(\Delta u_s - W'(u_s)\Big) \psi_s \Big] \, \rmd x  \, \rmd s
\end{equation}
is a continuous, square integrable $\bb P$-martingale with quadratic variation,
\begin{equation}
\label{a0.5}
\big[M^\psi\big]_t = 2  \int_0^t \int\! \big[ j *\big( \sqrt{\sigma(u_s)} \psi_s\big)\big]^2 \, \rmd x  \, \rmd s\;. \end{equation}

We shall refer to such probability $\bb P$ as a \emph{martingale solution} to \eqref{1} with initial datum $\bar u_0$. \emph{Uniqueness in law} (or uniqueness of martingale solutions) holds whenever there exists at most one probability on $C(L^2)$ meeting the above requirements.

To introduce the notion of strong solution it is first necessary to construct the martingale in terms of cylindrical Wiener process, whose definition we next recall. A $L^2$-cylindrical Wiener process on the probability space $(\Omega,\mc G,\mc P)$ is a measurable map $\alpha\colon \Omega\to C(H^{-\bar{s}})$, $\bar{s}>d/2$, such that $\alpha_t$, $t\in [0,T]$, is a mean zero Gaussian process with covariance,
\begin{equation*}
\mc E \big( \alpha_t(\phi) \alpha_{t'}(\phi') \big) = t\wedge t'\, \langle\phi , \phi'\rangle_{L^2} = t\wedge t'\,\langle\phi, (\textrm{Id}-\Delta)^{-\bar s} \phi'\rangle_{H^{\bar{s}}}\;, \qquad \phi,\phi'\in H^{\bar{s}}\;,
\end{equation*}
where $\mc E$ denotes the expectation with respect to $\mc P$. A $L^2$-cylindrical Wiener process can be constructed as $\alpha_t = \sum_k \beta^k_t e_k$, where $\{e_k\}$ is an orthonormal basis in $L^2$ and $\{\beta^k\}$ are independent standard Brownian processes on $\bb R$. Note that, since the embedding $L^2\hookrightarrow H^{-s}$ is Hilbert-Schmidt for $s>d/2$, $(\textrm{Id}-\Delta)^{-\bar s}$ is trace-class on $H^{-\bar{s}}$. We refer to \cite{DaZ} for a general overview on infinite dimensional stochastic calculus. We denote by $\{\mc G^\alpha_t\}$ the filtration generated by $\alpha$ completed with respect to $\mc P$. 

Given $v\in L^2$, we let $B(v) \colon L^2 \to L^2$ be the linear operator defined by $B(v)\psi = \sqrt{2 \sigma(v)}\, j *  \psi$. Since $j\in H^1$, $B(v)$ is Hilbert-Schmidt, i.e., $\mathrm{Tr}_{L^2}(B(v)B(v)^*) < \infty$.   
 
Given $\bar u_0\in H^1$, a measurable map $u\colon \Omega \to C(L^2)$ is a \emph{strong solution} to \eqref{1} with initial datum $\bar u_0$ iff $u$ is a $\mc G^\alpha_t$-adapted process, $\mc P(u_0=\bar u_0)=1$, $\mc P(u \in L^\infty(H^1) \cap L^2(H^2)) =1$, and, for each $\psi\in C^\infty\big([0,T]\times \bb T^d\big)$ and $t\in [0,T]$, the following equality holds $\mc P$-a.s.,
\be
\label{form}
\begin{split}
\langle u_t, \psi_t \rangle_{L^2} & = \langle u_0, \psi_0\rangle_{L^2}  + \int_0^t\! \langle u_s, \partial_s\psi_s\rangle_{L^2} \,\rmd s \\ & \quad + \int_0^t\! \langle \sigma(u_s) (\Delta u_s - W'(u_s)), \psi_s\rangle_{L^2} \, \rmd s + \int_0^t\! \langle \psi_s, B(u_s)\,\rmd\alpha_s \rangle_{L^2}\;,
\end{split}
\ee
where the last term is understood as an It\^o stochastic integral, see \cite{DaZ}. Note that \eqref{form} corresponds to \eqref{i.2} tested with the function $\psi$. \emph{Uniqueness of strong solutions} holds if any two such solutions $u,u'$ satisfy $\mc P(u_t=u'_t \; \;\forall\, t\in [0,T])=1$. 

It is worthwhile to observe that the requirement that strong solutions are adapted to the filtration generated by $\alpha$ means that they can be obtained as non-anticipati\-ve functions of $\alpha$.

In the analysis of \eqref{1} two specific functionals play an essential role, the aforementioned van der Waals' free energy functional $\mc F \colon L^2 \to [0,+\infty]$, defined by 
\be
\label{F}
\mc F(u) := \begin{cases} {\displaystyle \int\!\Big[ \frac12 |\nabla u|^2 + W(u) \Big]\,\rmd x} & \text{ if $u\in H^1$}, \\ +\infty & \text{ otherwise}, \end{cases}
\ee
and the Wilmore functional $\mc W \colon L^2 \to [0,+\infty]$, defined by 
\be
\label{W}
\mc W(u) := \begin{cases} {\displaystyle \int\!\sigma(u) \Big[\Delta u - W'(u) \Big]^2\,\rmd x} & \text{ if $u\in H^2$}. \\ +\infty & \text{ otherwise}. \end{cases}
\ee
Observe that since $W$ has at most quartic growth and $d\le 3$, by Sobolev embedding, $u\in H^1$ implies $W(u)\in L^1$, and, more precisely, $\mc F(u) \leq C(1+\|u\|_{H^1}^4) $. Similarly, since $W'$ has at most cubic growth, again by Sobolev embedding, if $u \in H^2$ then $W'(u)\in L^2$ and $\mc W(u) \leq C(1+ \| u\|_{H^2}^2+\| u\|_{H^1}^6)$. 

\begin{theorem}
\label{t:eu}
Given $\bar u_0\in H^1$, there exists a unique martingale solution $\bb P$ to \eqref{1}. Moreover, $\bb P (u \in C(H^1))=1$ and for  $p\in [1,\infty)$ there exists $C=C(\bar{u}_0,T,p)>0$ such that 
\begin{equation}
\label{steu}
\bb E \Big( \sup_{t\in [0,T]} \mc F (u_t) + \int_0^T\! \mc W (u_t)\, \rmd t \Big)^p \le C \;.
\end{equation}

In addition, given a probability space $(\Omega,\mc G,\mc P)$ equipped with a $L^2$-cylindrical Wiener process $\alpha$, there exists a unique strong solution $u$ to \eqref{1} with initial condition $\bar u_0$. The law of $u$ is the martingale solution $\bb P$.
\end{theorem}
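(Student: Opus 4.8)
The plan is to prove existence via compactness on approximate solutions, and uniqueness separately, combining the two through a Yamada--Watanabe argument to upgrade to strong well-posedness.

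\emph{Step 1: construction of approximate solutions.} First I would regularize the equation in two ways: (i) replace $W'$ by a globally Lipschitz approximation $W'_\e$ (e.g.\ truncating the growth while keeping the convexity-at-infinity and conditions (3)--(4) uniformly), so that the reaction term is Lipschitz; and (ii) freeze the mobility coefficient along a time partition $\{t_k = k T/n\}$, i.e.\ replace $\sigma(u_t)$ by $\sigma(u_{t_k})$ on $[t_k,t_{k+1})$ in both the drift and the noise coefficient. On each subinterval, with the mobility frozen, \eqref{1} becomes a semilinear SPDE with additive-type noise and Lipschitz drift, for which existence and uniqueness of a strong solution in $C(L^2)\cap L^2(H^1)$ follows from standard variational or semigroup theory; patching across the partition yields a process $u^{n,\e}$ defined on $[0,T]$, adapted to $\{\mc G^\alpha_t\}$.

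\emph{Step 2: a priori bounds via an It\^o formula for $\mc F$.} The heart of the argument is to derive, uniformly in $n$ and $\e$, the bound \eqref{steu} with $\mc F,\mc W$ replaced by their $\e$-regularized analogues. For this I would apply It\^o's formula to $\mc F_\e(u^{n,\e}_t)$ — more precisely to a Galerkin or Yosida-regularized version of $\mc F_\e$ so that the computation is rigorous — using that $\mc F'(u) = -\Delta u + W'(u)$ and hence the drift $\sigma(u)(\Delta u - W'(u)) = -\sigma(u)\mc F'(u)$ is (minus) the $\sigma$-weighted gradient. The drift contribution is then $-\int \sigma(u)(\Delta u - W'(u))^2\,\rmd x = -\mc W_\e(u)$, which is exactly the dissipation we want on the left-hand side; the It\^o correction term is $\mathrm{Tr}$ of $\mc F''$ composed with the noise covariance, which because $j\in H^1$ and $\sigma,\sigma',\sigma''$ are bounded is controlled by $C(1+\|u\|_{H^1}^2)$ using the growth bound (4) on $W''$ and Sobolev embedding in $d\le 3$; the martingale term has quadratic variation bounded by $C\int_0^t(1+\mc F_\e(u_s))\,\rmd s$. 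A Gronwall argument (after taking suprema and using Burkholder--Davis--Gundy for the $p$-th moment) then gives the uniform bound. This step, and in particular making the It\^o formula for $\mc F$ rigorous while keeping all constants uniform in the approximation parameters, is the \textbf{main obstacle}: one must carefully justify that the Yosida/Galerkin approximations of $\mc F$ converge in the right sense and that the nonlinear terms $W'(u)\in L^2$, $\Delta u\in L^2$ are genuinely square-integrable along the approximate solutions, which is where the $H^2$-regularity and the quartic growth restriction $d\le3$ enter.

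\emph{Step 3: tightness and passage to the limit.} From the uniform bounds, the laws of $u^{n,\e}$ on $C(L^2)$ are tight: boundedness in $L^\infty(H^1)\cap L^2(H^2)$ gives spatial compactness, and an equicontinuity-in-time estimate in a negative Sobolev norm (from the equation, the drift is bounded in $L^2(L^2)$ and the stochastic term is H\"older in time in $H^{-\bar s}$) gives the Aubin--Lions--type compactness, via e.g.\ the Skorokhod representation theorem. Along a subsequence $u^{n,\e}\to u$ a.s.\ in $C(L^2)$; the frozen mobility $\sigma(u^{n,\e}_{t_k})\to\sigma(u_s)$ because the time mesh vanishes and $u$ has continuous $L^2$ paths, and $W'_\e(u^{n,\e})\to W'(u)$ using the a.s.\ convergence plus uniform integrability from the moment bounds. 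One then identifies the limit as a martingale solution by passing to the limit in the martingale problem \eqref{a0}--\eqref{a0.5} (checking that $M^\psi_t$ and $(M^\psi_t)^2 - [M^\psi]_t$ remain martingales under the limit law, a standard argument). Lower semicontinuity of $\mc F$ and $\mc W$ transfers \eqref{steu} to the limit, and the improved regularity $u\in C(H^1)$ follows from the $L^\infty(H^1)\cap L^2(H^2)$ bound together with the equation (the time derivative lies in $L^2(L^2)$, so $u\in C(H^1)$ by interpolation).

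\emph{Step 4: uniqueness and strong solutions.} For uniqueness in law I would take two martingale solutions, realize both on a common probability space (again via Skorokhod), and estimate $\|u_t - u'_t\|_{H^{-1}}^2$ by It\^o's formula. The key algebraic point, following \cite{AL}, is that
\[
\lan \sigma(u)(\Delta u - W'(u)) - \sigma(u')(\Delta u' - W'(u')),\, u-u'\ran_{H^{-1}}
\]
can be rearranged, after writing $\sigma(u)(\Delta u - W'(u)) = \Delta\Phi(u) - (\text{lower order})$ for a primitive $\Phi$ of $\sigma$ and using the monotonicity of $\Phi$ together with the convexity-at-infinity of $W$, into a term controlled by $C(1+\dots)\|u-u'\|_{H^{-1}}^2$ minus a good term; the noise difference $B(u)-B(u')$ contributes $\|j*(\sqrt{\sigma(u)}-\sqrt{\sigma(u')})\cdot\|^2_{HS}\le C\|u-u'\|_{L^2}^2$, which is borderline and must be absorbed using the good term coming from the $\Delta\Phi$ part (here the $H^{-1}$ choice is essential — an $L^2$ estimate would not close, as noted in the discussion at the end of Section \ref{sec:2}). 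Gronwall then gives pathwise uniqueness. Finally, existence of a martingale solution plus pathwise uniqueness yields, by the Yamada--Watanabe theorem, existence of a strong solution adapted to $\{\mc G^\alpha_t\}$ whose law is $\bb P$, and uniqueness of strong solutions; this completes the proof.
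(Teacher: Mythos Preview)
Your overall architecture---approximate by freezing the mobility on a time mesh and truncating $W'$, derive uniform energy bounds via an It\^o formula for (a regularization of) $\mc F$, pass to the limit by tightness in $C(L^2)$, and conclude by an $H^{-1}$ pathwise-uniqueness estimate plus Yamada--Watanabe---is exactly the paper's strategy, and Steps~1--3 match it closely (the paper freezes $\sigma$ at a \emph{time average} of $u$ over the previous subinterval rather than at $u_{t_k}$, so that the frozen coefficient lies in $H^2$ as required by the auxiliary semigroup result; and it adds a Yosida regularization $R_\eta$ to the reaction term---both refinements you would discover when carrying out the details). Two points deserve correction, one minor and one substantive.

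\smallskip
\emph{Minor.} The quadratic variation of the $\mc F$-martingale in Step~2 is not bounded by $\int_0^t(1+\mc F(u_s))\,\rmd s$: it is $\int_0^t \|B(u_s)^*(-\Delta u_s+W'(u_s))\|_{L^2}^2\,\rmd s$, hence controlled by the Wilmore dissipation $\int_0^t \mc W(u_s)\,\rmd s$, and is absorbed into the left-hand side via BDG and Young. Also, the $C(H^1)$ regularity does \emph{not} follow from the deterministic interpolation ``$u\in L^2(H^2)$, $\partial_t u\in L^2(L^2)$ $\Rightarrow u\in C(H^1)$'', because the stochastic integral is not an $L^2(L^2)$ time derivative; the paper instead proves continuity of $t\mapsto \mc F(u_t)$ by another It\^o formula (Lemma~\ref{lem:5.2}).

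\smallskip
\emph{Substantive: the uniqueness estimate.} Your Step~4 proposes to control $\|u-u'\|_{H^{-1}}^2$ after rewriting $\sigma(u)\Delta u=\Delta\Phi(u)-\sigma'(u)|\nabla u|^2$ with $\Phi'=\sigma$. The term you call ``lower order'' is not: the difference
\[
\sigma'(u)|\nabla u|^2-\sigma'(u')|\nabla u'|^2
=\big(\sigma'(u)-\sigma'(u')\big)|\nabla u|^2+\sigma'(u')\,\nabla(u+u')\cdot\nabla(u-u')
\]
contains $\nabla(u-u')$, and pairing with $R_1(u-u')$ produces a contribution of size $\|u-u'\|_{H^{-1}}\,\|\nabla(u-u')\|_{L^2}$ (times quantities bounded after stopping). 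The only dissipation you generate from $\Delta\Phi$ is of order $\|u-u'\|_{L^2}^2$, which cannot absorb an $H^1$-difference, so the Gronwall does not close. The paper avoids this by a different choice of Lyapunov quantity: it takes
\[
\Psi_t=\tfrac12\,\big\|h(u_t)-h(u'_t)\big\|_{H^{-1}}^2,\qquad h(r)=\int_0^r\!\frac{\rmd s}{\sigma(s)}\,,
\]
so that the first variation of $\Psi$ in the $u$-slot is $h'(u)\,R_1(h(u)-h(u'))$, and when paired with the drift $\sigma(u)(\Delta u-W'(u))$ the factors $h'(u)\sigma(u)=1$ cancel exactly. One is left with $\langle R_1(h(u)-h(u')),\,\Delta(u-u')\rangle_{L^2}$, which (using $R_1\Delta=R_1-\mathrm{Id}$) yields the good dissipation $-\langle h(u)-h(u'),\,u-u'\rangle_{L^2}$ with \emph{no gradient-squared remainder}. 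The subsequent control of the $W'$ term, the It\^o correction, and the noise difference then goes through with stopping on $\|u\|_{H^1}+\|u'\|_{H^1}$, interpolation, and the product estimate $\|fg\|_{H^{-1}}\le C\|f\|_{H^1}\|g\|_{H^{-1/2}}$. In short: the key algebraic idea from \cite{AL} is to put the inverse mobility into the test quantity via $h'=1/\sigma$, not to force the equation into divergence form via $\Phi'=\sigma$; your version introduces a term that cannot be closed.
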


In view of the bounds on $\mc F$ and $\mc W$ discussed above, we notice that if $\bb P$ is a martingale solution then the integrand in \eqref{steu} is $\bb P$-a.s.\ finite; the estimate \eqref{steu} states that its moments are finite. We remark that this bound relies on the assumption that the function $j$ in \eqref{2} belongs to $H^1$. In particular, in one dimension, the case of space-time white noise is not covered by the previous theorem. In the case of constant mobility the corresponding solution $u$ exists, e.g., in $C([0,T];C( \bb T))$, but the $H^1$ norm of $u_t$ is infinite almost surely for each $t \in [0,T]$. 

The proof of Theorem \ref{t:eu} is structured through the following steps, in the same spirit of \cite{Mariani}. The existence of a martingale solution is obtained in Section \ref{sec:4} by means of compactness estimates on the laws of a sequence of adapted processes in $C(L^2)$. In order to handle the mobility, these processes are constructed by introducing a time discretization and solving in each time interval  a suitable semilinear approximated versions of \eqref{1} obtained by freezing the mobility and regularizing the reaction term. The actual construction of these approximated processes  requires an existence result for semilinear equations in $C(H^1)$, which is the content of Section \ref{sec:3}. To prove compactness, the key ingredient is the apriori estimate in Lemma \ref{lem:4.1} which basically states that the bound \eqref{steu} holds uniformly in the approximation parameters. This estimate relies on the variational structure of \eqref{1} and its proof is achieved by applying It\^o's formula to a suitable approximation of $\mc F$. To prove uniqueness of martingale solutions, in Section \ref{sec:5} we introduce the notion of weak solution to \eqref{1}; by a martingale representation lemma we show that martingale solutions produce weak solutions. Then, after proving the regularity properties of solutions, we show pathwise uniqueness of weak solutions via $H^{-1}$ estimates. Finally, by adapting the argument in the Yamada-Watanabe theorem, we obtain the existence and uniqueness result as stated in Theorem \ref{t:eu}. Some generation results on a class of $C_0$-semigroups, needed to the theory developed in Section \ref{sec:3}, are stated and proved in Appendix \ref{app:a} by applying the Lumer-Phillips theorem.

As stated before, the Allen-Cahn equation with mobility does not appear to have been considered in the mathematical literature and it does not seem directly analyzable by the variational method. For instance, the one-side Lipschitz condition (see, e.g., \cite[Condition (H3)]{L}) fails for the Gelfand triple $H^1(\bb T^d)  \subset L^2(\bb T^d)\subset H^{-1}(\bb T^d)$ even in the case of $W$ with quadratic growth. On the other hand, in the case of constant mobility the   
method applies, as shown in \cite{LR} and in the subsequent paper \cite{L}, with a dimensional dependent growth condition on the reaction term. In the one dimensional case the cubic growth is covered, however, Theorem \ref{t:eu} provides better regularity properties of the solution $u$, in particular, $u \in L^2([0,T];H^2)$ almost surely. In principle, the latter regularity property could be deduced by working with the Gelfand triple $H^2(\bb T^d) \subset H^1(\bb T^d) \subset L^2(\bb T^d)$, but this would require strong restrictions on the nonlinearity.

 Still in the case of constant mobility, an abstract existence result for stochastic partial differential equation of gradient type is proven in \cite{G}, using an approximation argument relying on apriori bounds analogous to the ones in the present paper.  When applied to the Allen-Cahn equation, see \cite[Rem.~4.9]{G}, the regularity properties are slightly weaker than the ones in Theorem \ref{t:eu}. It would be interesting to generalize the approach of \cite{G} to cover the case of nonconstant mobility.

\section{An auxiliary semilinear equation}
\label{sec:3}

In this section, we provide an existence result for a semilinear equation that will be used to construct an approximation of the stochastic Allen-Cahn equation. The arguments below follow the semigroup approach in \cite{DaZ}, it is however possible to obtain the same result by the variational approach in \cite{PrRo} choosing the Gelfand triple $H^2 \subset H^1\subset L^2$. 

Recall that $(\Omega,\mc G,\mc G_t,\mc P)$ is a standard filtered probability space equipped with a cylindrical Wiener process $\alpha\colon \Omega\to C(H^{-\bar{s}})$, $\bar{s}>d/2$, such that $\mc G_t = \mc G^\alpha_t$ is the filtration generated by $\alpha$ completed with respect to $\mc P$. Let $f\colon \bb R \to \bb R$ be globally Lipschitz. Fix a subinterval $[t_0,t_1] \subset [0, T]$, a $\mc G_{t_0}$-measurable random variable $w\colon \Omega \to H^1$ and a $\mc G_{t_0}$-measurable random variable $v \colon \Omega \to H^2$. Let $\eta>0$ and $R_\eta=(\mathrm{Id}-\eta \Delta)^{-1}$. Consider the following Cauchy problem on the time interval $[t_0,t_1]$,
\begin{equation}
\label{daprato1}
  \begin{cases}
    \rmd u_t = \sigma(v) \big[ \Delta u_t +R_\eta f(R_\eta u_t) \big] \rmd t + \sqrt{ 2 \sigma(v)} j* \rmd\alpha_t \;,\\
    u_{t_0} = w\;.
  \end{cases}
\end{equation}
We say that $u$ is a \emph{strong} solution to \eqref{daprato1} if $u \colon \Omega \to C([t_0, t_1] ; H^1)$ is $\mc G_t$-adapted, $\mc P(u_{t_0}=w)=1$, $\mc P(u \in L^2([t_0,t_1];H^2))=1$, and, for each $\psi\in C^\infty\big([t_0,t_1]\times \bb T^d\big)$ and $t\in [0,T]$, the following equality holds $\mc P$-a.s., 
\begin{equation}
\label{daprato2}
\begin{split}
\langle u_t, \psi_t \rangle_{L^2} & = \langle w, \psi_{t_0} \rangle_{L^2}  + \int_{t_0}^t\! \langle u_s, \partial_s\psi_s\rangle_{L^2} \,\rmd s \\ & \quad + \int_{t_0}^t\! \langle \sigma(v) (\Delta u_s + R_\eta f( R_\eta u_s)), \psi_s\rangle_{L^2} \, \rmd s + \int_{t_0}^t\! \langle \psi_s, B(v)\,\rmd\alpha_s \rangle_{L^2}\;,
\end{split}
\end{equation}
where we recall $B(v) \colon L^2 \to L^2$ is defined by $B(v)\psi = \sqrt{2 \sigma(v)}\, j *  \psi$.
\begin{proposition}
\label{daprato}
Assume the initial datum $w$ in \eqref{daprato1} satisfies $\mc E (\| w\|^2_{H^1})<\infty$. Then, the Cauchy problem \eqref{daprato1} has a strong solution $u$. Moreover, there exists $C>0$ depending only on $\eta$, $\mathrm{Lip}(f)$, and $\mc E (\| w\|^2_{H^1})$ such that
\begin{equation}
\label{daprato3}
\mc E(\| u\|^2_{C([t_0,t_1];H^1)} +\| u\|^2_{L^2([t_0,t_1];H^2)}) \le C \;.
\end{equation} 
Furthermore, if $\mc E (\| w\|^{2p}_{H^1})<\infty$ for some $p>1$ then there exists $C>0$ depending only on $\eta$, $\mathrm{Lip}(f)$, $\mc E (\| w\|^{2p}_{H^1})$, and $p$ such that
\begin{equation}
\label{daprato3p}
\mc E(\| u\|^{2p}_{C([t_0,t_1];H^1)} ) \le C \;.
\end{equation} 
\end{proposition}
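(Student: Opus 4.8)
The plan is to treat \eqref{daprato1} as a semilinear stochastic evolution equation in the Hilbert space $H^1$ and apply the standard semigroup theory of \cite{DaZ}, after checking that the relevant structural hypotheses hold. First I would observe that, since $v$ is a fixed $\mc G_{t_0}$-measurable random variable with values in $H^2$ and $\sigma$ is smooth and bounded below by $1/C$, the multiplication operator $\varphi\mapsto \sigma(v)\Delta\varphi$ generates, for $\mc P$-a.e.\ $\omega$, an analytic $C_0$-semigroup on $L^2$ with domain $H^2$; moreover this semigroup is analytic on $H^1$ with domain $\{\varphi\in H^1: \sigma(v)\Delta\varphi\in H^1\}$, which contains $H^3$ and is dense in $H^1$. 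The generation statement with the needed uniform bounds (in $\omega$, via the $L^\infty$ bounds on $\sigma,\sigma',\sigma''$) is exactly what is collected in Appendix \ref{app:a} by Lumer--Phillips, so I would cite it. The drift perturbation $\varphi\mapsto \sigma(v)R_\eta f(R_\eta\varphi)$ is globally Lipschitz from $H^1$ to $H^1$: indeed $R_\eta=(\mathrm{Id}-\eta\Delta)^{-1}$ maps $H^1$ continuously into $H^3\subset L^\infty$ with norm depending on $\eta$, $f$ is globally Lipschitz on $\bb R$ hence $f(R_\eta\cdot)$ is Lipschitz from $H^1$ into $L^\infty\cap L^2$, and applying $R_\eta$ again gains two derivatives and then multiplication by $\sigma(v)\in H^2\subset C(\bb T^d)$ (for $d\le 3$) keeps us in $H^1$; all constants depend only on $\eta$, $\mathrm{Lip}(f)$, and $\|v\|_{H^2}$, but since the latter does not enter because $\sigma$ and its first two derivatives are bounded uniformly, the Lipschitz constant is in fact deterministic. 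The diffusion coefficient is the constant-in-$u$ operator $B(v)$, which by $j\in H^1$ is Hilbert--Schmidt on $L^2$ and, again using $\sigma\in C^2$ with bounded derivatives and $j\in H^1$, is Hilbert--Schmidt from $L^2$ into $H^1$.

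With these three ingredients in place, the existence and uniqueness of an $H^1$-valued mild solution $u\in L^2(\Omega;C([t_0,t_1];H^1))$, adapted to $\mc G_t$, follows from the classical contraction-mapping argument for semilinear SPDEs with Lipschitz coefficients, \cite[Thm.~7.4]{DaZ}; the moment bound \eqref{daprato3} for the $C(H^1)$-norm comes out of the same fixed-point estimate together with Gronwall, the constant depending only on $\eta$, $\mathrm{Lip}(f)$, and $\mc E(\|w\|_{H^1}^2)$. For the higher moment bound \eqref{daprato3p} when $\mc E(\|w\|_{H^1}^{2p})<\infty$, I would apply the factorization method / the Burkholder--Davis--Gundy inequality in the stochastic convolution estimate \cite[Thm.~7.3, Prop.~7.3]{DaZ}, or simply run the fixed point directly in $L^{2p}(\Omega;C(H^1))$, again closing the estimate by Gronwall.

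It remains to upgrade the mild solution to the claimed spatial regularity $u\in L^2([t_0,t_1];H^2)$ with the bound in \eqref{daprato3}, and to check that the mild solution coincides with the weak (tested) formulation \eqref{daprato2}. For the latter, the equivalence of mild and weak solutions for semilinear equations is standard, \cite[Thm.~5.4]{DaZ}, once one notes that $C^\infty$ functions are in the domain of the adjoint generator. For the $H^2$ regularity, the natural route is an It\^o-type energy estimate: apply It\^o's formula to $\|u_t\|_{H^1}^2$ (equivalently, test the equation against $(\mathrm{Id}-\Delta)u_t$ after mollification) to obtain, for $\mc P$-a.e.\ $\omega$,
\begin{equation*}
\|u_t\|_{H^1}^2 + \frac{2}{C}\int_{t_0}^t\!\|\Delta u_s\|_{L^2}^2\,\rmd s \le \|w\|_{H^1}^2 + \int_{t_0}^t\!\big[\text{lower order} + \text{martingale}\big]\,\rmd s\;,
\end{equation*}
where the coercivity constant comes from $\sigma(v)\ge 1/C$ and an integration by parts moving the gradient onto $\Delta u_s$; the lower-order terms involve $\langle\nabla(\sigma(v))\cdot\nabla u_s,\Delta u_s\rangle$, $\langle\sigma(v)R_\eta f(R_\eta u_s),\Delta^2\text{-type}\rangle$ absorbed using the $\eta$-smoothing of $R_\eta$ and Young's inequality, and the It\^o correction term $\mathrm{Tr}_{H^1}(B(v)B(v)^*)$ which is finite by $j\in H^1$. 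Taking expectations kills the martingale, and Gronwall together with \eqref{daprato3} already established for the $C(H^1)$-norm closes the bound on $\mc E(\|u\|_{L^2(H^2)}^2)$.

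The main obstacle I anticipate is the $H^2$ a priori estimate: one must be careful that the manipulations (integration by parts against $\Delta u_s$, application of It\^o's formula in $H^1$) are rigorous, which typically forces one to first work with a Galerkin or Yosida approximation $u^n$ where $u^n\in H^2$ genuinely, derive the estimate uniformly in $n$, and pass to the limit; the delicate point there is controlling the cross term $\langle \nabla\sigma(v)\cdot\nabla u_s,\Delta u_s\rangle_{L^2}$, for which one uses $\nabla\sigma(v)=\sigma'(v)\nabla v\in L^2$ together with the Gagliardo--Nirenberg/Sobolev inequality in $d\le 3$ to interpolate $\nabla u_s$ between $L^2$ and $H^1$ and absorb a small multiple of $\|\Delta u_s\|_{L^2}^2$, the remainder being controlled by $\|u_s\|_{H^1}^2$ times a random but integrable coefficient depending on $\|v\|_{H^2}$. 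Everything else is a routine application of the semigroup framework, and since all the structural constants are either deterministic or depend only on the stated data, the final bounds have the advertised dependence.
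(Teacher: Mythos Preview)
Your proposal is correct and follows essentially the same route as the paper: verify the Lipschitz and Hilbert--Schmidt hypotheses so that \cite[Thm.~7.4]{DaZ} yields a mild $H^1$-solution together with the moment bounds \eqref{daprato3p}, then obtain the $L^2(H^2)$ estimate by an It\^o computation on the Dirichlet energy, made rigorous through an approximation, and finally identify mild and weak formulations.

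The only noteworthy difference lies in the approximation used for the energy estimate. The paper does not appeal to a generic Galerkin or Yosida scheme but instead regularizes both the functional and the generator simultaneously via $R_\delta=(\mathrm{Id}-\delta\Delta)^{-1}$: it replaces $A$ by the bounded operator $A_\delta=\sigma(R_\delta v)R_\delta\Delta R_\delta$ (whose semigroup $S_\delta$ is controlled in Appendix~\ref{app:a}), solves the resulting linear equation \eqref{daprato5} with the nonlinearity frozen at the true solution $u$, and applies It\^o's formula to $\Phi^\delta(u)=\tfrac12\int|\nabla R_\delta u|^2\,\rmd x$, which is genuinely $C^2$ on $L^2$. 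This choice dovetails with the semigroup convergence $S_\delta\to S$ already established in the appendix and makes the passage $\delta\to 0$ transparent. Your Galerkin/Yosida alternative would work equally well but would require a separate verification of the approximation properties. One small correction: the cross term $\langle\nabla\sigma(v)\cdot\nabla u_s,\Delta u_s\rangle_{L^2}$ you flag as delicate does not in fact appear. The drift contribution to the It\^o formula for $\tfrac12\|\nabla u\|_{L^2}^2$ is $\langle -\Delta u_s,\sigma(v)\Delta u_s\rangle_{L^2}=-\int\sigma(v)|\Delta u_s|^2\,\rmd x$, which is directly coercive without any integration by parts; the paper's computation \eqref{daprato8} reflects exactly this.
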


\begin{proof}
By Lemma \ref{semigroups}, the operator $A=\sigma(v) \Delta$ generates a $C_0$-semigroup $S(t)$ on the Hilbert space $H^1$. As in Lemma \ref{semigroups}, $\sigma(v) \in H^2$ and it is a bounded multiplier on $H^1$. Moreover, the function $f$ induces a Lipschitz map $F_\eta\colon H^1 \to H^1$ given by $F_\eta(u):=\sigma(v) R_\eta f(R_\eta u)$, in view of the simple estimate,
\[
\begin{split}
\| F_\eta (u_1) -F_\eta (u_2) \|_{H^1} & \le C \| \sigma(v)\|_{H^2} \| R_\eta\|_{_{L^2 \to H^1}} \| f (R_\eta u_1) - f(R_\eta u_2) \| _{L^2} \\ & \le C \| \sigma(v)\|_{H^2} \| R_\eta\|_{_{L^2 \to H^1}} \mathrm{Lip}(f) \| u_1-u_2\|_{L^2}\;.
\end{split}
\]
Finally, by a direct computation, the operator $\tilde{B}\colon L^2 \to H^1$ defined by $\tilde{B} \psi =\sqrt{2 \sigma(v)} j *  \psi$, $\psi \in  L^2$, is an Hilbert-Schmidt operator with norm bound $\|\tilde{B}\|^2_{HS} \le C \| \sqrt{\sigma(v)} \|^2_{H^2} \| j\|_{H^1}^2$.

In view of the previous statements, we can apply \cite[Thm.\ 7.4]{DaZ} and deduce the existence of a unique $\mc G_t$-progressively measurable map $u \colon \Omega \to C([t_0,t_1]; H^1)$ satisfying the mild formulation of \eqref{daprato1}, i.e.,
\begin{equation}
\label{daprato4}
u_t=S(t-t_0)w+\int_{t_0}^t\! S(t-s) F_\eta(u_s) \, \rmd s + \int_{t_0}^t S(t-s) \tilde{B}\, \rmd\alpha_s \, ,
\end{equation}
and the estimate $\sup_{t \in [t_0,t_1]} \mc E (\| u_t\|_{H^1}^2) \le C (1+\mc E (\| w\|_{H^1}^2) )$.

In order to obtain the bound \eqref{daprato3} we would like to apply It\^o's formula to $\Phi(u) := \frac 12 \int\!|\nabla u|^2\,\rmd x$. However, since $\Phi$ is not $C^2$ on $L^2$, to accomplish this step we first introduce a suitable approximation scheme. For $\delta>0$ let $A_\delta$ be as in Lemma \ref{semigroups}, and consider the following linear Cauchy problems,
\begin{equation}
\label{daprato5}
\begin{cases}
\rmd z_t =  \big[A_\delta z_t +F_\eta (u_t) \big]\rmd t + \tilde{B} \rmd\alpha_t\;, \\ z_{t_0} = w\;,
\end{cases}
\end{equation}
where $u$ is the unique solution to \eqref{daprato4}. By Lemma \ref{semigroups}, $A_\delta$ generates a $C_0$-semigroup $S_\delta$ and there exists constants $m_0$ and $C$ such that,
\[
\int_{t_0}^{t_1}\! \| S_\delta (r-t_0) \tilde{B}\|^2_{HS} \, \rmd r \le C (t_1-t_0) e^{m_0(t_1-t_0)} \| \sqrt{\sigma(v)} \|^2_{H^2} \| j\|_{H^1}^2\;.
\]
Therefore, the process $u^\delta_t$  defined by
\begin{equation}
\label{daprato6}
u^\delta_t := S_\delta(t-t_0)w + \int_{t_0}^t\! S_\delta(t-s) F_\eta(u_s) \, \rmd s + \int_{t_0}^t\! S_\delta(t-s) \tilde{B}\, \rmd\alpha_s
\end{equation}
is a $\mc P$-a.s.\ well defined $H^1$-valued with continuous trajectories and $\mc G_t$-progressively measurable. By Fubini's Theorem (see \cite[Thm.\ 4.18]{DaZ} for the stochastic case) a direct computation shows that $u^\delta$ solves \eqref{daprato5} in the sense that, $\mc P$-a.s.,
\begin{equation}
\label{daprato7}
u^\delta_t=w+\int_{t_0}^t\! \big[ A_\delta u^\delta_s+  F_\eta(u_s) \big] \, \rmd s +  \tilde{B} \alpha_t\;, \qquad t\in [t_0,t_1] \;.
\end{equation}
Observe that, for each $t \in [t_0,t_1]$,
\[
\mc E \Big(  \left\|  \int_{t_0}^t\! \big[S_\delta (t-s) -S(t-s) \big] \tilde{B} \, \rmd\alpha_s \right\|^2_{H^1} \Big)= \int_{t_0}^t\!\left\| \big[S_\delta (t-s) -S(t-s) \big] \tilde{B}\right\|^2_{HS} \rmd s \;.
\] 
Combining the previous identity with Lemma \ref{semigroups}, items (2) and (3), equation \eqref{daprato6} and dominated convergence easily imply that for each $t \in [t_0,t_1]$ $u^\delta_t \to u_t$ in $L^2(\Omega; H^1)$ and, in addition, $u^\delta \to u$ in $L^2(\Omega; L^2([t_0,t_1]\times \bb T^d))$ as $\delta \to 0$.
 
Let $\Phi^\delta \colon L^2 \to \bb R$ be defined by 
\[ 
\Phi^\delta(u) := \int\! \frac12| \nabla R_\delta u|^2 \, \rmd x = - \frac12 \langle u, R_\delta \Delta R_\delta u \rangle_{L^2} \, \;,
\]
so that for $u \in H^1$ we have $\Phi^\delta(u) \to \int\! \frac12| \nabla  u|^2 \, \rmd x =\Phi(u)$ as $\delta \to 0$. 

Since $\Phi^\delta$ is $C^2$ with locally bounded and uniformly continuous first and second derivatives, we can apply It\^o's  formula, see, e.g., \cite[Thm.\ 4.17]{DaZ}. Then, in view of \eqref{daprato5}, we get,
\begin{equation}
\label{daprato8}
\begin{split}
&\Phi^\delta(u^\delta_t)+ \int_{t_0}^t  \int\! \sigma(R_\delta v) \left| R_\delta \Delta R_\delta u^\delta_s \right|^2 \, \rmd x \, \rmd s \\ & \quad = \Phi^\delta(w) +\int_{t_0}^t\! \int R_\delta (-\Delta) R_\delta u^\delta_s F_\eta(u_s) \, \rmd x \, \rmd s \\ & \qquad + \frac{t-t_0}2 \, \mathrm{Tr}_{_{L^2}} \left( R_\delta (-\Delta) R_\delta B B^* \right) +\int_{t_0}^t\! \langle  R_\delta (-\Delta) R_\delta u^\delta_s,  B\,  \rmd\alpha_s  \rangle_{L^2}\;,
\end{split}
\end{equation}
where $B:=B(v)=\mathrm{Id}_{_{H^1 \to L^2}}\tilde{B}$. 

We next estimate separately the terms on the right-hand side of \eqref{daprato8}. By Young's inequality, for each $\zeta>0$ there exists $C_\zeta>0$ such that
\[
\begin{split}
& \int_{t_0}^t \int\! R_\delta (-\Delta) R_\delta u^\delta_s F_\eta(u_s) \, \rmd x \, \rmd s \le \zeta \int_{t_0}^t \int\! \left| R_\delta \Delta R_\delta u^\delta_s \right|^2 \, \rmd x \, \rmd s \\ & \qquad\qquad + C_\zeta \| \sigma\|_\infty \left(|f(0)|^2+\mathrm{Lip}(f)^2 \int_{t_0}^t\! \| u_s \|_{L^2}^2 \, \rmd s  \right)\; .
\end{split}
\] 
Clearly, if $\{ e_{\ell}\} \subset L^2$ is an orthonormal basis we have,
\[
\mathrm{Tr}_{_{L^2}} \left( R_\delta (-\Delta) R_\delta B B^* \right)=\sum_{\ell} \| R_\delta \nabla B e_{\ell}  \|_{L^2}^2\le C  \| \sqrt{\sigma( v)}\|_{H^2}^2 \| j \|_{H^1}^2 \;.
\]
Let $N_t$, $t \in [t_0,t_1]$, be the continuous martingale $ N_t=\int_{t_0}^t \langle  R_\delta (-\Delta) R_\delta u^\delta_s,  B\, \rmd\alpha_s  \rangle_{L^2} $. Since $B^* \psi= j* (\sqrt{2\sigma(v)} \psi)$, the quadratic variation of $N$ can be estimated as follows,
\[
[N]_t= \int_{t_0}^t\! \left\| B^* R_\delta (-\Delta) R_\delta u_s^\delta\right\|_{L^2}^2 \, \rmd s \le C  \| \sigma \|_\infty \| j\|^2_{L^1} \int_{t_0}^t\!  \int \left| R_\delta \Delta R_\delta u^\delta_s \right|^2 \, \rmd x \,\rmd s\;.
\]
By taking the supremum for $t \in [t_0, t_1]$ in \eqref{daprato8}, using again Young's inequality, taking the expectation, and gathering the above bounds together with the $L^2$-Doob's inequality,
\[
\mc E \Big(\sup_{t \in [t_0,t_1]} N_t \Big)^2 \le 4\, \mc E ([N]_{t_1}) \;,
\]
we conclude that there exists $C>0$ such that
\[
\begin{split}
& \mc E \left( \sup_{t \in [t_0,t_1]} \Phi^\delta(u^\delta_t) + \int_{t_0}^{t_1}\!  \int\! \left| R_\delta \Delta R_\delta u^\delta_s \right|^2 \, \rmd x \, \rmd s \right) \le C \bigg(  \mc E (\| w\|_{H^1}^2)   +  \| \sigma\|_\infty \|j\|_{L^1}^2 \\& + \| \sigma\|_\infty \Big( |f(0)|^2  +\mathrm{Lip}(f)^2 \int_{t_0}^{t_1}\!  \mc E (\| u_s \|_{L^2}^2) \, \rmd s \Big) +(t_1-t_0) \| \sqrt{\sigma( v)}\|_{H^2}^2 \| j \|_{H^1}^2 \bigg) \, .
\end{split}
\]
Since $\sup_{t \in [t_0,t_1]} \mc E (\| u_t\|_{H^1}^2) \le C (1+\mc E (\| w\|_{H^1}^2) )$ we finally get,
\begin{equation}
\label{daprato9}
\mc E \left( \sup_{t \in [t_0,t_1]} \Phi^\delta(u^\delta_t) + \int_{t_0}^{t_1}\! \int\!  \left| R_\delta \Delta R_\delta u^\delta_s \right|^2 \, \rmd x \, \rmd s \right) \le C \left( 1+ \mc E (\|w \|_{H^1}^2) \right) \; ,
\end{equation}
for some $C=C(t_1-t_0,\sigma, \|v\|_{H^2}, \mathrm{Lip}(f))>0$.

Since $u \in C([t_0,t_1];H^1)$, $\mc P$-a.s.\ we can take a countable dense set $\mc S \subset [t_0,t_1]$ and a subsequence still denoted by $\delta\to 0$ such that, $\mc P$-a.s., $u^\delta_s \to u_s$, $s\in \mc S$, in $H^1$ as $\delta \to 0$. Thus, as $\Phi^\delta \to \Phi$ pointwise in $H^1$, we get, $\mc P$-a.s., $\Phi(u_s) \le \varliminf_{\delta \to 0} \Phi^\delta(u^\delta_s)$ for all $s \in \mc S$. Then, the continuity $t \mapsto u_t$ implies that,  $\mc P$-a.s., $\sup_{t\in [t_0,t_1]} \Phi(u_t) \le \varliminf_{\delta \to 0} \sup_{t \in [t_0,t_1]} \, \Phi^\delta(u^\delta_t)$. By Fatou's' Lemma and \eqref{daprato9} we conclude $\mc E (\| u\|^2_{C([t_0,t_1];H^1)}) \le C\left( 1+ \mc E (\|w \|_{H^1}^2) \right)$. 

Again by Fatou's Lemma and \eqref{daprato9} we have, 
$$
\mc E \left(   \varliminf_{\delta \to 0} \| \Delta R_\delta R_\delta u^\delta \|^2_{L^2([t_0,t_1] \times \mathbb{T}^d)} \right)
\le C\left( 1+ \mc E (\|w \|_{H^1}^2) \right) \, .
$$
In particular, $\mc P$-a.s. we have $\varliminf_{\delta \to 0} \| \Delta R_\delta R_\delta u^\delta \|^2_{L^2([t_0,t_1] \times \mathbb{T}^d)}<\infty$. As $R_\delta R_\delta u^\delta \to u$ in $L^2(\Omega \times [t_0,t_1]\times \mathbb{T}^d)$, by elliptic regularity and lower semicontinuity we get $u \in L^2(\Omega \times [t_0,t_1] ;H^2)$ and $\mc E (\| u\|_{L^2(H^2)}^2) \le  C\left( 1+ \mc E (\|w \|_{H^1}^2) \right)$.  

Next, we show that $u$ satisfies \eqref{daprato2}. Fix $\psi \in C^\infty([t_0,t_1] \times \mathbb{T}^d)$ and consider the function $ \Psi\colon [t_0,t_1] \times L^2 \to \mathbb{R}$ given by $\Psi(t,u):=\langle \psi_t, u_t \rangle_{L^2}$. Clearly $\Psi$ is $C^2$ with locally uniformly continuous first and second derivatives, hence It\^o's  formula and \eqref{daprato5} give, for any $t\in [t_0,t_1]$,
\begin{equation}
\label{daprato10}
\begin{split}
\langle u^\delta_t, \psi_t\rangle_{L^2} & = \langle w, \psi_{t_0}\rangle_{L^2} + \int_{t_0}^t\! \Big[\langle u^\delta_s,\partial_s\psi_s\rangle_{L^2} + \langle A_\delta u^\delta_s +F_\eta(u_s), \psi_s \rangle_{L^2} \Big] \, \rmd s \\ & \quad + \int_{t_0}^t \langle \psi_s, B(v) \rmd \alpha_s\rangle_{L^2} \qquad \mbox{$\mc P$-a.s.}
\end{split}
\end{equation}
Recalling that $u^\delta \to u$ in $L^2(\Omega \times[t_0,t_1] \times \mathbb{T}^d)$ and $u^\delta_t \to u_t$ in $L^2(\Omega;L^2)$, up to subsequences we have $u^\delta \to u$ in $L^2([t_0,t_1] \times \mathbb{T}^d)$ and $u^\delta_t \to u_t$ in $L^2$ $\mc P$-a.s.. In order to take the limit as $\delta \to 0$ in \eqref{daprato10}, it remains to show that $\mc P$-a.s.\ we have $A_\delta u^\delta \rightharpoonup \sigma(v) \Delta u$ in $L^2([t_0,t_1] \times \mathbb{T}^d)$. To this end, notice that for $\mc P$-a.s.\ $\omega \in \Omega$ there exists a subsequence depending on $\omega$ such that $\Delta R_\delta R_\delta u^\delta \rightharpoonup \Delta u$ in $L^2([t_0,t_1] \times \mathbb{T}^d)$. Since $R_\delta v \to v$ in $H^2$, by Sobolev embedding we have $\sigma(R_\delta v) \to \sigma(v)$ uniformly, hence the desired statement follows.  

Finally, the bound \eqref{daprato3p} is the content of \cite[Thm.\ 7.4, item (iii)]{DaZ}. 
\end{proof}

\section{Existence of martingale solutions}
\label{sec:4}

In this section we prove the following existence result.

\begin{theorem}
\label{th:4.1}
Given $\bar u_0\in H^1$, there exists a martingale solution $\bb P$ to \eqref{1} with initial condition $\bar u_0$. Furthermore, for any $p\in [1,\infty)$ there exists $C>0$ such that
\begin{equation}
\label{a4terdue}
\bb E  \left(\sup_{t\in [0,T]} \| u_t\|^{2p}_{H^1} + \| u\|_{L^2(H^2)}^{2p} \right) \le C \;.
\end{equation}
\end{theorem}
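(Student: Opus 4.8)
The plan is to construct the martingale solution $\bb P$ as a limit of laws of approximate processes built by time discretization of the mobility together with the regularization of the reaction term provided by Proposition~\ref{daprato}. First I would fix a partition $0=t_0^n<t_1^n<\cdots<t_n^n=T$ with mesh $T/n$ and, on each subinterval $[t_k^n,t_{k+1}^n]$, solve the semilinear Cauchy problem \eqref{daprato1} with $v=u^n_{t_k^n}$ frozen and $f = f_m := -W'$ truncated (or mollified) so as to be globally Lipschitz with constant of order $m$; the parameters $\eta$ and $m$ (and possibly $n$) would be coupled, say $\eta=\eta_n\to 0$ and $m=m_n\to\infty$. Proposition~\ref{daprato} furnishes, on each step, a strong solution in $C(H^1)\cap L^2(H^2)$ with the stated moment bounds, and concatenating these produces a global $\mc G^\alpha_t$-adapted process $u^n\in C(L^2)$ with $u^n\in L^\infty(H^1)\cap L^2(H^2)$ a.s. Each $u^n$ satisfies a testing identity of the form \eqref{form} but with $\sigma(u_s)$ replaced by the piecewise-constant-in-time $\sigma(u^n_{t^n_{\kappa(s)}})$ and with $W'$ replaced by the regularized $R_{\eta_n} W'_{m_n}(R_{\eta_n}\cdot)$.

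The heart of the matter is the a priori bound in Lemma~\ref{lem:4.1}, which asserts precisely that \eqref{a4terdue} (equivalently \eqref{steu}) holds uniformly in the approximation parameters $n$; I would invoke it here. Its proof, which I expect to be the main obstacle, rests on applying It\^o's formula to a regularized version of the free energy $\mc F$ — say $\mc F_\delta(u) := \tfrac12\|\nabla R_\delta u\|_{L^2}^2 + \int W_\delta(R_\delta u)\,\rmd x$ with $W_\delta$ a smooth approximation — along the approximate dynamics. The formal computation gives $\rmd\mc F(u^n_t) = -\mc W(u^n_t)\,\rmd t + (\text{It\^o correction}) + (\text{martingale})$, where the crucial sign is that $\langle \mc F'(u), \sigma(u)(\Delta u - W'(u))\rangle_{L^2} = -\mc W(u)$, so that the variational (dissipative) structure produces the good term $-\int_0^T\mc W(u^n_t)\,\rmd t$ on the left-hand side. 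The It\^o trace correction is controlled because $j\in H^1$ (this is exactly where the $H^1$ hypothesis on $j$ enters) using Assumption~\ref{t:ws}(4) to bound $W''$ by $\sqrt{W}+1$, and the martingale term is handled by the Burkholder--Davis--Gundy inequality to extract the $p$-th moment in \eqref{a4terdue}; the error terms coming from time-discretization of $\sigma$ and from the $R_\eta$-regularization must be shown to be lower order, using the Lipschitz bounds on $\sigma,\sigma',\sigma''$ and the growth conditions on $W,W'$ together with the Sobolev embeddings valid for $d\le 3$. A Gronwall argument then closes the estimate.

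With the uniform bound \eqref{a4terdue} in hand, the second step is a compactness/tightness argument. The bounds give that the laws $\mc Q^n := \mc P\circ (u^n)^{-1}$ are tight on a suitable path space: uniform integrability in $L^\infty(H^1)\cap L^2(H^2)$ plus an equicontinuity estimate in a weaker norm (obtained from the equation \eqref{form} — the drift is bounded in, say, $L^2(L^2)$ via $\|\sigma(u)(\Delta u - W'(u))\|_{L^2}$, and the stochastic term gives H\"older continuity in $L^2$ by BDG) yields tightness in $C(L^2)$ by an Aubin--Lions--Simon type argument, and simultaneously in $L^2(H^1)$ and in the weak topologies of $L^\infty(H^1)$ and $L^2(H^2)$. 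By Prokhorov I extract a weakly convergent subsequence, and by Skorokhod's representation theorem I realize the convergence almost surely on a new probability space, obtaining $\tilde u^n \to \tilde u$ in $C(L^2)$, a.s., with the weak-topology bounds passing to the limit by lower semicontinuity so that $\tilde u\in L^\infty(H^1)\cap L^2(H^2)$ a.s.\ and \eqref{a4terdue} holds for $\tilde u$ (hence for $\bb P := \text{law}(\tilde u)$) by Fatou.

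The final step is identification of the limit: I must show $\bb P$ is a martingale solution, i.e.\ that for every $\psi\in C^\infty([0,T]\times\bb T^d)$ the process $M^\psi$ of \eqref{a0} is a continuous square-integrable $\bb P$-martingale with quadratic variation \eqref{a0.5}. This is done by passing to the limit in the approximate identities: the linear terms $\langle u^n_t,\psi_t\rangle$, $\int\langle u^n_s,\partial_s\psi_s\rangle$, and $\int\langle\sigma(\cdot)\Delta u^n_s,\psi_s\rangle$ converge using the strong $C(L^2)$ convergence together with the weak $L^2(H^2)$ convergence and the uniform (Sobolev) convergence $\sigma(\text{frozen }u^n)\to\sigma(\tilde u)$ (here the time-discretization error $u^n_{t^n_{\kappa(s)}}\to u^n_s$ must be controlled using continuity of paths and the uniform bounds); the nonlinear reaction term $\int\langle\sigma R_{\eta_n}W'_{m_n}(R_{\eta_n}u^n_s),\psi_s\rangle \to \int\langle\sigma(\tilde u_s)W'(\tilde u_s),\psi_s\rangle$ requires a.s.\ (or $L^1$) convergence of $W'(u^n_s)\to W'(\tilde u_s)$, which follows from the a.s.\ convergence in $L^2$ combined with the cubic growth of $W'$ and uniform higher-moment control in $H^1\hookrightarrow L^6$ to get equi-integrability. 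The martingale property and the form of the quadratic variation \eqref{a0.5} then pass to the limit by the standard argument: $M^\psi_t$ and $(M^\psi_t)^2 - 2\int_0^t\int[j*(\sqrt{\sigma(u_s)}\psi_s)]^2$ are shown to be limits of martingales bounded in $L^2$, hence martingales, using the uniform moment bounds for uniform integrability. I expect the two genuinely delicate points to be (i) the a priori estimate of Lemma~\ref{lem:4.1} — getting the dissipative $-\mc W$ term out cleanly through the $\delta$- and $\eta$-regularizations while keeping the It\^o correction subordinate — and (ii) the passage to the limit in the nonlinear terms, where the interplay of the Sobolev exponents at $d=3$ and quartic growth is tight and the joint limit $\eta_n\to0$, $m_n\to\infty$ must be arranged consistently.
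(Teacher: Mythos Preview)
Your overall strategy---time-discretize the mobility, regularize the reaction term, invoke Proposition~\ref{daprato} on each subinterval, derive uniform bounds via It\^o's formula on a regularized free energy (Lemma~\ref{lem:4.1}), then pass to the limit by tightness---is exactly the paper's. There is, however, one concrete gap in your construction: you freeze $v=u^n_{t_k^n}$ on $[t_k^n,t_{k+1}^n]$, but Proposition~\ref{daprato} (via Lemma~\ref{semigroups}) requires $v\in H^2$ so that $\sigma(v)\Delta$ generates a $C_0$-semigroup on $H^1$, whereas the endpoint value $u^n_{t_k^n}$ is only known to lie in $H^1$ (from $u^n\in C(H^1)$). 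The paper fixes this by taking $v^n$ on $[t^n_i,t^n_{i+1})$ to be the \emph{time average} of $u^n$ over the \emph{previous} interval $[t^n_{i-1},t^n_i)$ (see \eqref{a1.5}), and $\imath_n*\bar u_0$ on the first step; since $u^n\in L^2(H^2)$ inductively by \eqref{daprato3}, this average lies in $H^2$. This choice also meshes cleanly with the a priori estimate: the It\^o trace correction produces a term $\|\nabla v^n_s\|_{L^2}^2$ (not $\|\nabla u^n_s\|_{L^2}^2$), and the time-average definition gives $\|\nabla v^n_s\|_{L^2}\le\sup_{s'\le s}\|\nabla u^n_{s'}\|_{L^2}$ by Jensen, which is precisely what is needed to close the Gronwall loop in Lemma~\ref{lem:4.1}. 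There is no ``lower-order error from time-discretization of $\sigma$'' to control in the energy estimate: the dissipative term $\int\sigma(v^n_s)(\Delta u^n_s-R_\eta W'_\ell(R_\eta u^n_s))^2$ is signed correctly regardless of which frozen $\sigma$ appears.

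Your identification of the limit via Skorokhod representation is a legitimate alternative; the paper instead stays at the level of laws, regularizing the martingale-problem functional by $G^\delta(u):=G(R_\delta u)$ (which is continuous on $C(L^2)$) and proving uniform convergence $G_{n,\ell,\eta}\to G^\delta\to G$ on compact sets $\mc D_a\subset C(L^2)$ built from the uniform bounds (Lemma~\ref{lem:4.3}). Both routes work. The paper also keeps the parameters $n,\ell,\eta$ independent rather than coupled, which slightly simplifies the bookkeeping but is not essential.
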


The martingale solution will be obtained as a weak limit point of an approximating  sequence of probabilities on $C(L^2)$, that are the laws of a sequence of processes recursively defined according to the following scheme.

Let $(\Omega,\mc G,\mc G_t,\mc P)$ be a standard filtered probability space equipped with a $L^2$-cylindrical Wiener process $\alpha\colon \Omega\to C(H^{-\bar{s}})$, $\bar{s}>d/2$, such that $\mc G_t = \mc G^\alpha_t$ is the filtration generated by $\alpha$ completed with respect to $\mc P$. Given $\ell>0$ let also $W_\ell\colon\bb R \to\bb R$ be the $C^2$ function defined by
\be
\label{wl}
W_\ell(u):= \begin{cases} W(u) & \text{ if } |u|\le \ell\;, \\ W(\ell) + W'(\ell)(|u|-\ell) +\frac 12 W''(\ell)(|u|-\ell)^2 & \text { if } |u| >\ell\;. \end{cases}
\ee
Observe that, for any $\ell$ large enough, the function $W_\ell$ has quadratic growth at infinity both from above and below. Moreover, $W'_\ell$ is globally Lipschitz.

Fix $\eta>0$ and a smooth approximation $\imath_n$ of the Dirac $\delta$-function with $\|\imath_n\|_{L^1}=1$. Given $n\in \bb N$, consider the partition $0=t^n_0< t^n_1< \ldots < t^n_n=T$ with $t^n_{i+1}-t^n_i=T/n $ for $i=0, \ldots, n-1$. In each time step of this partition, we recursively construct a sequence of $\mathcal{G}_t$-adapted continuous processes $u^n$ and a sequence of $\mathcal{G}_t$-adapted processes $v^n$ which is constant on each time interval $[t^n_i, t^n_{i+1})$ as follows. Define, 
\begin{equation}
\label{a1.25}
v^n_t := \imath_n*\bar u_0 \in H^2(\mathbb{T}^d) \quad \hbox{for} \quad t\in[t^n_0,t^n_1)
\end{equation}
and set $\sigma^n_0=\sigma (v^n_{t^n_0})$. 
According to Proposition \ref{daprato} we define $u^n_t$,
$t\in [t^n_0,t^n_1)$, as a solution to 
\begin{equation}
\label{un0}
\begin{cases} 
\rmd u^n_t = \sigma^n_0 \big( \Delta u^n_t - R_\eta W_\ell'(R_\eta u^n_t)\big) \rmd t + \sqrt{2\sigma^n_0}\, j* \rmd \alpha_t\;, \\ u^n_0 = \bar{u}_0\;. 
\end{cases}
\end{equation}

By induction, for $1\le i \le n-1$ we define,
\begin{equation}
\label{a1.5}
v^n_{t} := \frac 1{t^n_{i}-t^n_{i-1}} \int_{t^n_{i-1}}^{t^n_{i}}\! u^n_s \, \rmd s, \qquad t\in [t^n_i,t^n_{i+1})
\end{equation}
and $\sigma^n_i:=\sigma(v^n_{t^n_i})$. Again by Proposition \ref{daprato}, we let $u^n_t$, $t\in [t^n_i,t^n_{i+1})$, be the solution to 
\begin{equation}
\label{uni}
\begin{cases} 
\rmd u^n_t = \sigma^n_i \big( \Delta u^n_t - R_\eta W_\ell'(R_\eta u^n_t)\big) \rmd t + \sqrt{2\sigma^n_i} \, j* \rmd \alpha_t\;, \\ u^n_{t^n_i}=\lim_{s \uparrow t^n_i} u^n_s \;.
\end{cases}
\end{equation}
We finally set $u^n_{t^n_n}=\lim_{s \uparrow t^n_n} u^n_s$. Notice that, by recursively using Proposition \ref{daprato} and \eqref{a1.5}, we get $\mc P$-a.s.\ $v^n \in L^\infty(H^2)$ and $u^n \in C(H^1) \cap L^2(H^2)$. Note that, although not indicated in the notation, the process $u^n$ also depends on $\eta$ and $\ell$.  

\smallskip
The proof of Theorem \ref{th:4.1} is split into three lemmata. 

\begin{lemma}[A priori bounds]
\label{lem:4.1}
Let $u^n$ be the process constructed by solving \eqref{un0}-\eqref{uni} and $\mathcal{F}_{\ell,\eta} \colon H^1 \to \mathbb{R}$ be the functional defined by
\begin{equation} 
\label{fle}
\mathcal{F}_{\ell,\eta}(u)= \int\! \frac12 |\nabla u|^2 + W_{\ell}(R_\eta u) \, \rmd x\;.
\end{equation} 
Then, for any $p\in[1,+\infty)$,
\begin{equation}
\label{a4bis}
\mc E  \left( \sup_{t\in [0,T]}\mc F_{\ell,\eta}(u^n_t) \right)^p + \frac12 \mc E  \left( \int_{0}^{T} \!\int\! \sigma(v^n_t) \big( \Delta u^n_t -R_\eta W'_\ell (R_\eta u^n_t)\big)^2\, \rmd x \, \rmd t \right)^p  \le C \, , 
\end{equation}
where $C>0$ depends only on $\|\bar{u}_0\|_{H^1}$, $T$, and $p$ but is independent of $n$, $\ell$, and $\eta$. 
\end{lemma}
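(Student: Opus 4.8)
\emph{Approach.} Inequality \eqref{a4bis} should come from It\^o's formula applied to the free energy $\mc F_{\ell,\eta}$, exploiting that on each subinterval $[t^n_i,t^n_{i+1})$ equation \eqref{uni} is (up to the regularization $R_\eta$) the $L^2$-gradient flow of $\mc F_{\ell,\eta}$ with \emph{frozen} mobility $\sigma^n_i=\sigma(v^n_{t^n_i})$, perturbed by additive noise with covariance operator $B(v^n)B(v^n)^*$. Since the $L^2$-gradient of $\mc F_{\ell,\eta}$ is $D\mc F_{\ell,\eta}(u)=-\Delta u+R_\eta W'_\ell(R_\eta u)$ and the drift of \eqref{uni} is $-\sigma^n_i\,D\mc F_{\ell,\eta}(u^n)$, the first-order It\^o term equals $-\int\!\sigma(v^n)\big(\Delta u^n-R_\eta W'_\ell(R_\eta u^n)\big)^2\,\rmd x\,\rmd t\le 0$, while the second-order term equals $\tfrac12\mathrm{Tr}\big(D^2\mc F_{\ell,\eta}(u^n)B(v^n)B(v^n)^*\big)\,\rmd t$ with $D^2\mc F_{\ell,\eta}(u)=-\Delta+R_\eta W''_\ell(R_\eta u)R_\eta$. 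Summing over $i$ — the endpoint terms telescope because $t\mapsto\mc F_{\ell,\eta}(u^n_t)$ is continuous by Proposition \ref{daprato} — one arrives at
\[
\mc F_{\ell,\eta}(u^n_t)+\mc D^n_t\le\mc F_{\ell,\eta}(\bar u_0)+\tfrac12\int_0^t\!\mathrm{Tr}\big(D^2\mc F_{\ell,\eta}(u^n_s)B(v^n_s)B(v^n_s)^*\big)\,\rmd s+M^n_t\,,
\]
where $\mc D^n_t:=\int_0^t\!\int\!\sigma(v^n_s)\big(\Delta u^n_s-R_\eta W'_\ell(R_\eta u^n_s)\big)^2\,\rmd x\,\rmd s$ and $M^n_t:=\int_0^t\!\langle -\Delta u^n_s+R_\eta W'_\ell(R_\eta u^n_s),B(v^n_s)\,\rmd\alpha_s\rangle_{L^2}$ is a continuous martingale. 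Estimate \eqref{a4bis} follows by a Gr\"onwall/Burkholder--Davis--Gundy argument once the trace term and $M^n$ are bounded uniformly in $n,\ell,\eta$.

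\emph{Rigor of the It\^o step.} As $\mc F_{\ell,\eta}$ is not $C^2$ on $L^2$, I would follow the approximation scheme of Proposition \ref{daprato}: on each subinterval replace the generator by $A_\delta$, solve \eqref{daprato5} for $u^{n,\delta}$ (which by that proof converges to $u^n$ in $L^2(\Omega;C([t^n_i,t^n_{i+1}];H^1))$ pointwise in time and in $L^2(\Omega\times[t^n_i,t^n_{i+1}]\times\bb T^d)$, with $R_\delta\Delta R_\delta u^{n,\delta}\rightharpoonup\Delta u^n$ weakly in the latter), and apply It\^o's formula to the genuinely $C^2$ functional $\mc F^\delta_{\ell,\eta}(u):=\tfrac12\|\nabla R_\delta u\|_{L^2}^2+\int\! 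W_\ell(R_\eta u)\,\rmd x$ (the second term is $C^2$ on $L^2$ since $W''_\ell$ is bounded and $R_\eta$ is a contraction). Letting $\delta\to 0$: $\mc F^\delta_{\ell,\eta}(u^{n,\delta}_t)\to\mc F_{\ell,\eta}(u^n_t)$; the dissipation term only improves by lower semicontinuity; the four drift contributions converge by weak--strong pairing and recombine, via $-a^2+2ab-b^2=-(a-b)^2$, into the negative square above; and $M^{n,\delta}_t\to M^n_t$ because stochastic integration against the fixed noise $\alpha$ is bounded — hence weak-to-weak continuous — on the adapted subspace of $L^2(\Omega\times[0,T];L^2)$, while $D\mc F^\delta_{\ell,\eta}(u^{n,\delta})=-R_\delta\Delta R_\delta u^{n,\delta}+R_\eta W'_\ell(R_\eta u^{n,\delta})\rightharpoonup D\mc F_{\ell,\eta}(u^n)$ there. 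This produces the displayed inequality.

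\emph{The two uniform bounds.} For the Laplacian part of the trace, $\mathrm{Tr}\big((-\Delta)B(v)B(v)^*\big)=\sum_k\|\nabla\big(\sqrt{2\sigma(v)}\,(j*e_k)\big)\|_{L^2}^2$; expanding the gradient by Leibniz and using only that $\sigma,\sigma',\sigma^{-1}$ are bounded yields the \emph{decisive} estimate $\mathrm{Tr}\big((-\Delta)B(v)B(v)^*\big)\le C\|j\|_{H^1}^2\big(1+\|\nabla v\|_{L^2}^2\big)$ — note this involves $\|\nabla v\|_{L^2}$ only, not $\|v\|_{H^2}$ as in Proposition \ref{daprato}. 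For the potential part, since $R_\eta=(\mathrm{Id}-\eta\Delta)^{-1}$ has a nonnegative convolution kernel of unit mass, Jensen gives $\sum_k(R_\eta B(v)e_k)^2\le R_\eta\big(\sum_k(B(v)e_k)^2\big)=2\|j\|_{L^2}^2\,R_\eta(\sigma(v))\le 2\|\sigma\|_\infty\|j\|_{L^2}^2$ pointwise, so $\mathrm{Tr}\big(R_\eta W''_\ell(R_\eta u)R_\eta B(v)B(v)^*\big)\le C\|j\|_{L^2}^2\int\!|W''_\ell(R_\eta u)|\,\rmd x\le C\|j\|_{L^2}^2\big(1+\int\! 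W_\ell(R_\eta u)\,\rmd x\big)$, where the last step invokes Assumption \ref{t:ws}(4), which — together with $|W_\ell(u)|\le C(|u|^4+1)$ and $|W'_\ell(u)|\le C(|u|^3+1)$ — holds for $W_\ell$ with constants uniform in $\ell$ (for $\ell$ large), by Assumption \ref{t:ws}(2)--(3). Since on $[t^n_i,t^n_{i+1})$ the frozen datum $v^n$ is a time-average of $u^n$ over the previous subinterval, $\|\nabla v^n_s\|_{L^2}^2\le 2\sup_{r\le s}\mc F_{\ell,\eta}(u^n_r)$, and $\int\! W_\ell(R_\eta u^n_s)\,\rmd x\le\mc F_{\ell,\eta}(u^n_s)$; hence the full trace term is $\le C\big(1+\sup_{r\le s}\mc F_{\ell,\eta}(u^n_r)\big)$ with $C$ depending only on $j$ and on the $\sigma$- and $W$-constants. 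For the martingale, $\|j*g\|_{L^2}\le\|j\|_{L^1}\|g\|_{L^2}$ gives $[M^n]_t\le 2\|j\|_{L^1}^2\,\mc D^n_t$.

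\emph{Closing, and the main obstacle.} Put $X^n_t:=\sup_{r\le t}\mc F_{\ell,\eta}(u^n_r)$. Taking $\sup_{t\le\tau}$ in the displayed inequality and using that $\mc D^n$ is nondecreasing, $\tfrac12(X^n_\tau+\mc D^n_\tau)\le\mc F_{\ell,\eta}(\bar u_0)+C\int_0^\tau(1+X^n_s)\,\rmd s+\sup_{t\le\tau}M^n_t$; raising to the $p$-th power, taking expectation, applying Burkholder--Davis--Gundy with $[M^n]_\tau\le 2\|j\|_{L^1}^2\mc D^n_\tau$ together with Young's inequality to absorb a small multiple of $\mc E(\mc D^n_\tau)^p$ into the left side, and Hölder's inequality on the time integral, gives $\mc E(X^n_\tau+\mc D^n_\tau)^p\le C_p\big(1+\mc F_{\ell,\eta}(\bar u_0)^p\big)+C_p\int_0^\tau\!\mc E(X^n_s)^p\,\rmd s$; Gr\"onwall then closes the estimate (run, for rigor, on $[0,\tau_R]$ with $\tau_R=\inf\{t:X^n_t\ge R\}$, then let $R\to\infty$). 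The resulting constant is uniform in $n,\ell,\eta$ because $\mc F_{\ell,\eta}(\bar u_0)\le\tfrac12\|\bar u_0\|_{H^1}^2+C\|R_\eta\bar u_0\|_{L^4}^4+C\le C\big(1+\|\bar u_0\|_{H^1}^4\big)$, by the Sobolev embedding $H^1\hookrightarrow L^4$ (valid for $d\le3$) and $\|R_\eta\bar u_0\|_{H^1}\le\|\bar u_0\|_{H^1}$. I expect the delicate part to be the rigor of the It\^o step: the $\delta\to0$ passage requires simultaneously lower semicontinuity of the dissipation and of $\mc F_{\ell,\eta}$, the algebraic recombination of the drift terms into a perfect square, and the identification of the limiting stochastic integral as $M^n$ (at the level of second derivatives the approximations converge only weakly). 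A secondary, bookkeeping-type point is that \emph{every} constant — above all $\mathrm{Tr}\big((-\Delta)B(v)B(v)^*\big)\le C\|j\|_{H^1}^2(1+\|\nabla v\|_{L^2}^2)$ and the growth bounds on $W_\ell$ — must stay independent of $n,\ell,\eta$, since this is exactly the uniformity needed for the subsequent compactness argument.
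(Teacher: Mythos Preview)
Your strategy is exactly the paper's: apply It\^o's formula to $\mc F_{\ell,\eta}$ on each subinterval, telescope, bound the trace by $C(1+\sup_{r\le s}\mc F_{\ell,\eta}(u^n_r))$ via the estimates $\mathrm{Tr}((-\Delta)BB^*)\le C\|j\|_{H^1}^2(1+\|\nabla v^n\|_{L^2}^2)$ and $|W''_\ell|\le C(1+W_\ell)$, control the quadratic variation $[M^n]_t\le 2\|j\|_{L^1}^2\mc D^n_t$, and close by BDG/Young/Gronwall.

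The one place where you make life harder than necessary is the regularization of the It\^o step. You propose approximating \emph{both} the functional (by $\mc F^\delta_{\ell,\eta}$) and the process (by $u^{n,\delta}$ solving \eqref{daprato5}), which forces you into weak convergence $R_\delta\Delta R_\delta u^{n,\delta}\rightharpoonup\Delta u^n$, lower semicontinuity of the dissipation, and weak convergence of the stochastic integral. The paper regularizes only the functional and applies It\^o's formula directly to $u^n$: since Proposition~\ref{daprato} already gives $u^n\in L^2(\Omega;L^2([t^n_i,t^n_{i+1}];H^2))$, one has \emph{strong} convergence $R_\delta\Delta R_\delta u^n\to\Delta u^n$ in $L^2(\Omega\times[t^n_i,t^n_{i+1}]\times\bb T^d)$, so the drift term passes to the limit by dominated convergence and the martingale $N^{n,i,\delta}_t\to N^{n,i}_t$ in $L^2(\Omega)$ (hence a.s.\ along a subsequence). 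This removes precisely the delicacy you flag. A second minor difference: rather than a stopping time $\tau_R$, the paper justifies finiteness in Gronwall by the (non-uniform in $n,\ell,\eta$) bound $\mc E(\sup_t\mc F_{\ell,\eta}(u^n_t))^p<\infty$ obtained by iterating \eqref{daprato3}--\eqref{daprato3p}.
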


\begin{proof}
The lemma is essentially achieved by applying It\^o's formula to $\mathcal{F}_{\ell,\eta}$, however we need to introduce a suitable approximation scheme to actually carry out the computation. Given $\delta>0$, let $\mathcal{F}^\delta_{\ell,\eta} \colon L^2 \to \mathbb{R}$ be defined by
\begin{equation}
\label{fle1}
\mathcal{F}^\delta_{\ell,\eta}(u)= \int\! \frac12 | \nabla R_\delta u|^2 + W_{\ell}(R_\eta u) \, \rmd x\;.
\end{equation}
 
By straightforward computations,  $\mathcal{F}^\delta_{\ell,\eta}$ is $C^2$ with locally bounded and uniformly continuous first derivative $\big(D \mathcal{F}^\delta_{\ell,\eta}\big)_u \in L^2$ and second derivative $\big(D^2  \mathcal{F}^\delta_{\ell,\eta} \big)_u \colon L^2 \to L^2$ given by
\[
\big(D \mathcal{F}^\delta_{\ell,\eta}\big)_u= R_\delta \Delta R_\delta u-R_\eta W'_\ell(R_\eta u) \; , \; \big(D^2  \mathcal{F}^\delta_{\ell,\eta} \big)_u=R_\delta (-\Delta) R_\delta + R_\eta W''_{\ell}(R_\eta u) R_\eta \; .
\]
Hence, by It\^o's formula, for each $t\in [t^n_{i},t^n_{i+1}]$ we have,
\begin{equation}
\label{a1}
\begin{split}
  & \mc F^\delta_{\ell,\eta}(u^n_t) + \int_{t^n_i}^{t} \!\int\!
  \sigma^n_i  \left( R_\delta \Delta R_\delta u^n_s-R_\eta W'_\ell(R_\eta u^n_s) \right)  \left( \Delta u^n_s -R_\eta W'_\ell(R_\eta u^n_s)\right)\, \rmd x \, \rmd s 
  \\
  &
  =  \mc F^\delta_{\ell,\eta}(u^n_{t^n_i})
  + \frac 12 \int_{t^n_i}^{t}\!  {\rm Tr}_{L^2} \left( B_{n,i}^* \left[ R_\delta (-\Delta) R_\delta + R_\eta W''_{\ell}(R_\eta u^n_s) R_\eta \right] B_{n,i}\right) \, \rmd s + N^{n,i, \delta}_t \, ,
\end{split}
\end{equation}
where $B_{n,i} \colon L^2 \to L^2$ and is defined as $B_{n,i} \psi= \sqrt{2 \sigma^n_i} j * \psi$ and $N^{n,i, \delta}$, $t\in [t^n_i,t^n_{i+1}]$, is the martingale 
\begin{equation}
N^{n,i, \delta}_t =\int_{t^n_i}^{t}   \left\langle  R_\delta (-\Delta) R_\delta u^n_s+R_\eta W'_\ell(R_\eta u^n_s), B_{n,i}\, \rmd\alpha_s \right\rangle_{L^2} \; .
\end{equation}

Letting $\{e_k\}$, $k \in  \mathbb{Z}^d$, be the standard orthonormal Fourier basis in $L^2$, we bound the trace terms as follows,
\[
\begin{split}
& {\rm Tr}_{L^2} \left( B_{n,i}^*  R_\delta (-\Delta) R_\delta  B_{n,i}\right)= \sum_k \|  R_\delta \nabla  B_{n,i} e_k \|^2_{L^2}\le \sum_k \| \nabla \left( \sqrt{2\sigma^n_i} j*e_k\right) \|^2_{L^2} \\ & \le 4 \sum_k  \left( \| (\nabla \sqrt{\sigma^n_i}) \, j*e_k  \|^2_{L^2}+\| \sqrt{\sigma^n_i}\, (\nabla j)* e_k \|^2_{L^2} \right) \\ & \le C \sum_k  \left( \frac{\|\sigma'\|_\infty^2}{4\inf \sigma} |\hat{j}(k)|^2  \| \nabla v^n_{t^n_i} \|^2_{L^2}+\| \sigma\|_\infty  | \hat{\nabla j}(k) |^2 \right) \le C(\sigma) \| j \|^2_{H^1} (1+\| \nabla v^n_{t^n_i} \|^2_{L^2})
\end{split} 
\]
and
\[
\begin{split}
& {\rm Tr}_{L^2} \left( B_{n,i}^*  R_\eta W''_{\ell}(R_\eta u^n_s) R_\eta B_{n,i}\right)= \sum_k \int\! |R_\eta B_{n,i} e_k |^2 W''_{\ell}(R_\eta u^n_s) \, \rmd x \\ & \quad \le  \sum_k \| R_\eta B_{n,i} e_k \|^2_{L^\infty} \int\! \left| W''_{\ell}(R_\eta u^n_s)\right| \, \rmd x \le \| \sigma\|_\infty \|j\|^2_{L^2} \int\! \left| W''_{\ell}(R_\eta u^n_s)\right| \, \rmd x\;, 
\end{split}
\]
where we used that $\| j*e_k\|_{L^\infty} \le |\hat{j}(k)|$ and $\| \nabla j*e_k\|_{L^\infty} \le |\hat{ \nabla j}(k)|$.

As $u^n \in L^2(\Omega;L^2([t^n_i, t^n_{i+1}];H^2))$ then $R_\delta (-\Delta) R_\delta u ^n -(-\Delta) u^n \to 0$ in $L^2(\Omega\times [t^n_i,t^n_{i+1}] \times \mathbb{T}^d)$. This implies $N^{n,i, \delta}_t \to N^{n,i}_t$ in $L^2(\Omega)$, where $N^{n,i}$ is the martingale
\begin{equation}
N^{n,i}_t =\int_{t^n_i}^{t}\! \left\langle  -\Delta u^n_s+R_\eta W'_\ell(R_\eta u^n_s), B_{n,i}\, \rmd\alpha_s \right\rangle_{L^2} \, .
\end{equation}
Indeed, 
\[ 
\mathcal{E}  \big( N^{n,i, \delta}_t -N^{n,i}_t \big)^2=\mathcal{E} \int^t_{t^n_i}\! \| B_{n,i} \left[ R_\delta (-\Delta) R_\delta u_s ^n -(-\Delta) u_s^n\right] \|^2_{L^2} \, \rmd s \overset{\delta \to 0}{\longrightarrow} 0\;.
\]
  
Using that $\mc P$-a.s.\ $u^n\in C([t^n_i,t^n_{i+1}]; H^1)\cap L^2([t^n_i,t^n_{i+1}]; H^2)$ we can take the limit as $\delta \to 0$ in the first three terms of \eqref{a1} by dominated convergence. As $N_t^{n,i,\delta} \to N_t^{n,i}$ $\mathcal{P}$-a.s. for a suitable subsequence, combining with the previous bound on the trace terms we finally get, for each $t\in [t^n_{i},t^n_{i+1}]$,
\begin{equation}
  \label{a2}
  \begin{split}
  & \mc F_{\ell,\eta}(u^n_t) + \int_{t^n_i}^{t}\!\int\!
  \sigma^n_i  \left(  \Delta  u^n_s-R_\eta W'_\ell(R_\eta u^n_s) \right)^2  \rmd x \, \rmd s 
  \\
  &
 \le  \mc F_{\ell,\eta}(u^n_{t^n_{i}})
  + C(\sigma)\| j \|^2_{H^1}
  \int_{t^n_i}^{t}\!  \int\!
   \left(1+| \nabla v^n_s |^2  +  \left| W''_{\ell}(R_\eta u^n_s)\right| \right)\, \rmd x
  \, \rmd s  +N^{n,i}_t \; ,
  \end{split}
  \end{equation}
where we used that $v^n$ is constant in the time interval $[t^n_i,t^n_{i+1})$.

Fix $t\in [0,T]$ and let $i_n(t)$ be such that $t\in[t^n_{i_n(t)},t^n_{i_n(t)+1})$, by summing \eqref{a2} in all the time intervals $[t^n_{j},t^n_{j+1})$, $j\le i_n(t)$, we deduce,
\begin{equation}
  \label{a2bis}
  \begin{split}
  & \mc F_{\ell,\eta}(u^n_t) + \int_{0}^{t} \!\int\!
  \sigma(v^n_s) \big( \Delta u^n_s -R_\eta W'_\ell (R_\eta u^n_s)\big)^2 \rmd x \, \rmd s 
  \\
  & \quad 
  \le\mc F_{\ell,\eta}(\bar u_{0})  
  + C(\sigma)\| j \|^2_{H^1}
  \int_{0}^{t}\!  \int\!
   \left(1+| \nabla v^n_s |^2  +  \left| W''_{\ell}(R_\eta u^n_s)\right| \right)\, \rmd x
  \, \rmd s + N^n_t\;,
  \end{split}
\end{equation}
where $N^n$ is the continuous $\mc P$-martingale $N^n_t=\sum_{j<i_n(t)} N^{n,j}_{t^n_{j+1}} +N^{n,i_n(t)}_t$.  In particular, the quadratic variation of $N^n$ is
\begin{equation}
\label{a3}
[N^n]_t = 2 \int_{0}^{t}\! \!\int\!\big\{ j*\big[ \sqrt{\sigma(v^n_s)}\big( -\Delta u^n_s + R_\eta W_\ell'(R_\eta u^n_s)\big)\big]\big\}^2 \, \rmd x \, \rmd s\;.
\end{equation}

By the assumptions on $W$ and the definition of $W_\ell$, \eqref{wl}, there exists $C>0$ independent of $\ell$ such that $|W''_\ell(\cdot)| \le C(1+W_\ell(\cdot))$. Moreover, for each $s\in [t^n_i,t^n_{i+1})$ with $i \ge 1$ we have, 
\[ 
\| \nabla v^n_s\|^2_{L^2} \le \frac{1}{t^n_i-t^n_{i-1}} \int^{t^n_i}_{t^n_{i-1}}\! \| \nabla u^n_{s'} \|^2_{L^2} \, \rmd s' \le \sup_{ s' \le s} \| \nabla u^n_{s'}\|^2_{L^2} \;. 
\]
Since $v^n_0 \equiv i_n * \bar{u}_0$, the previous bound yields $\| \nabla v^n_s\|_{L^2} \le \sup_{0 \le s' \le s} \| \nabla u^n_{s'}\|_{L^2}$ for any $0\le s \le T$. Thus, combining the two estimates above we have,
\[  
\int\! \left(1+| \nabla v^n_s |^2  +  \left| W''_{\ell}(R_\eta u^n_s)\right| \right)\, \rmd x \le C( 1+ \sup_{s' \le s} \mathcal{F}_{\ell,\eta}(u^n_{s'}))\;,
\]
hence, taking the supremum over time in \eqref{a2bis} we obtain,
\begin{equation}
\label{a4}
\begin{split}
  & \sup_{s\le t}\mc F_{\ell,\eta}(u^n_s) + \int_{0}^{t} \!\int\!
  \sigma(v^n_s) \big( \Delta u^n_s -R_\eta W'_\ell (R_\eta u^n_s)\big)^2 \rmd x \, \rmd s 
  \\
  & \qquad 
  \le 2\Big\{ \mc F_{\ell,\eta}(\bar u_{0})  
  + 
  C\int_{0}^{t}\! 1+\sup_{s' \le s} \mathcal{F}_{\ell,\eta}(u^n_{s'})
  \, \rmd s 
  + \sup_{s\le t}N^n_s \Big\}\: .
  \end{split}
  \end{equation}
Given $p \ge 1$, the previous inequality implies,
\begin{equation}
\label{a4p}
\begin{split}
  & \left( \sup_{s\le t}\mc F_{\ell,\eta}(u^n_s) \right)^p + \left(\int_{0}^{t} \!\int\!
  \sigma(v^n_s) \big( \Delta u^n_s -R_\eta W'_\ell (R_\eta u^n_s)\big)^2 \rmd x \, \rmd s \right)^p 
  \\
  & \qquad 
  \le C\Big\{  \left(\mc F_{\ell,\eta}(\bar u_{0}) \right)^p  
  + 
  \int_0^t\! 1+ \left(\sup_{s' \le s} \mathcal{F}_{\ell,\eta}(u^n_{s'}) \right)^p
  \, \rmd s 
  + \left(\sup_{s\le t}N^n_s \right)^p \Big\}\;,
  \end{split}
\end{equation}
for some $C=C_p>0$.

By Young's and BDG's inequalities (see, e.g., \cite{RY} for the latter) there exists a constant $C=C_p>0$ such that for each $\gamma>0$ we have,
\[ 
\begin{split}
& \mc E \left( \left(\sup_{s\le t}N^n_s \right)^p \right)  \le \frac\gamma 2  \mc E \left( \left(\sup_{s\le t}N^n_s \right)^{2p} \right)+ \frac{1}{2\gamma}\le 2 C \gamma \, \mc E ([N^n]^p_t ) + \frac{1}{2\gamma} \\ & \quad \le 4C\gamma \,    \mc E \left( \left(\int_{0}^{t} \!\int\! \sigma(v^n_s) \big( -\Delta u^n_s + R_\eta W_\ell'(R_\eta u^n_s)\big)^2 \,\rmd x \, \rmd s \right)^p \right)+\frac{1}{2\gamma}\;,
\end{split}
\]
where we used \eqref{a3}.  Choosing $\gamma>0$ small enough and taking the expectation in \eqref{a4p} we have,
  \begin{equation}
  \label{qen}
  \begin{split}
  & \mc E  \left(\sup_{s\le t}\mc F_{\ell,\eta}(u^n_s) \right)^p + \frac12 \mc E  \left(\left(\int_{0}^{t} \!\int\!
  \sigma(v^n_s) \big( \Delta u^n_s -R_\eta W'_\ell (R_\eta u^n_s)\big)^2 \rmd x \, \rmd s \right)^p \right)
  \\
  & \qquad 
  \le C \Big\{ \left( \mc F_{\ell,\eta}(\bar u_{0}) \right)^p  
  + 
  \int_{0}^{t}\!
   1+\mc E  \left(\sup_{s' \le s} \mathcal{F}_{\ell,\eta}(u^n_{s'})\right)^p \,\rmd s  \Big\}
   \end{split}
  \end{equation} 
As $\mc F_{\ell,\eta}(\cdot) \le C ( 1+ \| \cdot\|_{H^1}^4)$ for some $C$ independent of $\ell$ and $\eta$, applying recursively \eqref{daprato3} and \eqref{daprato3p} on each time interval $[t^n_i,t^n_{i+1}]$ we get $\mc E \left(\sup_{t\in [0,T]}\mc F_{\ell,\eta}(u^n_t) \right)^p  <\infty$. Thus, the bound \eqref{a4bis} follows from \eqref{qen} by Gronwall's inequality.
\end{proof}

\begin{lemma}[Tightness of the approximating sequence] 
\label{lem:4.2}
Let $\bb P^n_{\ell,\eta}$ be the law of the process $u^n$ constructed by solving \eqref{un0}-\eqref{uni}. Then $(\bb P^n_{\ell,\eta})$ is a tight family of probabilities on $C(L^2)$. 
\end{lemma}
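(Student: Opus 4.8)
The plan is to verify tightness of $(\bb P^n_{\ell,\eta})_n$ on $C(L^2)$ through the Arzel\`a--Ascoli characterisation of compactness: a subset $\mc K\subset C(L^2)$ has compact closure if and only if $\{u_t:u\in\mc K\}$ is relatively compact in $L^2$ for every $t\in[0,T]$ and $\mc K$ is equicontinuous in $L^2$. Thus it suffices to exhibit, for every $\varepsilon>0$, a set of this form carrying $\bb P^n_{\ell,\eta}$-mass at least $1-\varepsilon$ uniformly in $n$. The key inputs are the uniform a priori bounds of Lemma \ref{lem:4.1}. Since $W_\ell$ has quadratic growth at infinity from below and $R_\eta$ fixes the zero Fourier mode, the Poincar\'e inequality gives $\mc F_{\ell,\eta}(u)\ge c\|u\|_{H^1}^2-C$, so \eqref{a4bis} yields $\sup_n\mc E\big(\sup_{t\in[0,T]}\|u^n_t\|_{H^1}^{2p}\big)\le C$ for every $p\in[1,\infty)$. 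Moreover, writing $g^n_s:=\sigma(v^n_s)\big(\Delta u^n_s-R_\eta W'_\ell(R_\eta u^n_s)\big)$ for the drift integrand and using that $\sigma$ is bounded and bounded below, $\|g^n_s\|_{L^2}^2\le\|\sigma\|_\infty\int\sigma(v^n_s)\big(\Delta u^n_s-R_\eta W'_\ell(R_\eta u^n_s)\big)^2\,\rmd x$, whence $\sup_n\mc E\big(\int_0^T\|g^n_s\|_{L^2}^2\,\rmd s\big)^p\le C$ by \eqref{a4bis}.

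Next I would decompose the trajectories. Since $v^n$, hence $\sigma(v^n)$, is piecewise constant in time and agrees on each subinterval with the coefficient frozen in \eqref{un0}--\eqref{uni}, summing \eqref{daprato2} (with time-independent test functions) over the subintervals and using $u^n\in L^2(H^2)$ one obtains, $\mc P$-a.s.\ and for all $t$,
\[
u^n_t=\bar u_0+D^n_t+S^n_t,\qquad D^n_t:=\int_0^t g^n_s\,\rmd s,\qquad S^n_t:=\int_0^t\Theta^n_s\,\rmd\alpha_s,
\]
where $\Theta^n_s\psi:=\sqrt{2\sigma(v^n_s)}\,j*\psi$. For the drift term, H\"older's inequality gives $\|D^n_t-D^n_\tau\|_{L^2}\le|t-\tau|^{1/2}\big(\int_0^T\|g^n_s\|_{L^2}^2\,\rmd s\big)^{1/2}$, hence $\sup_n\mc E\,\|D^n\|_{C^{1/2}([0,T];L^2)}^{2p}\le C$. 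For the stochastic term, the trace bound in the proof of Lemma \ref{lem:4.1}, in the limit $\delta\to0$, gives $\|\Theta^n_s\|_{\mathrm{HS}(L^2;H^1)}^2\le C(\sigma)\|j\|_{H^1}^2\big(1+\|\nabla v^n_s\|_{L^2}^2\big)\le C(\sigma)\|j\|_{H^1}^2\big(1+\sup_{r\le T}\|u^n_r\|_{H^1}^2\big)$, so $S^n$ is a well-defined continuous $H^1$-valued martingale; by the Hilbert-space Burkholder--Davis--Gundy inequality and the moment bound above, for every integer $m\ge1$,
\[
\mc E\|S^n_t-S^n_\tau\|_{H^1}^{2m}\le C_m\,\mc E\Big(\int_\tau^t\|\Theta^n_s\|_{\mathrm{HS}(L^2;H^1)}^2\,\rmd s\Big)^m\le C_m\,|t-\tau|^m,
\]
and Kolmogorov's continuity criterion yields $\sup_n\mc E\|S^n\|_{C^\gamma([0,T];H^1)}^{2m}\le C$ for any $\gamma<\tfrac12-\tfrac1{2m}$; fix $m$ large and $\gamma\in(0,\tfrac12)$.

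To conclude, for $R>0$ set $\mc K_R:=\{u\in C(L^2):\|u\|_{C(H^1)}\le R,\ \|u-\bar u_0\|_{C^\gamma([0,T];L^2)}\le R\}$. Since $D^n\in C^{1/2}([0,T];L^2)$ and $S^n\in C^\gamma([0,T];H^1)\hookrightarrow C^\gamma([0,T];L^2)$, the bounds above and Markov's inequality give $\bb P^n_{\ell,\eta}(\mc K_R^{\,c})\le C/R^{q}$ for some $q>0$, uniformly in $n$, which vanishes as $R\to\infty$. On the other hand $\overline{\mc K_R}$ is compact in $C(L^2)$: for each $t$ the set $\{u_t:u\in\mc K_R\}$ is bounded in $H^1$, hence relatively compact in $L^2$ by Rellich's theorem, while the uniform $C^\gamma$ bound gives equicontinuity in $L^2$, so Arzel\`a--Ascoli applies. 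Tightness follows. The only genuinely delicate point is the control of $S^n$: the diffusion operator $\Theta^n_s$ is random and bounded in $\mathrm{HS}(L^2;H^1)$ only by a power of $\sup_r\|u^n_r\|_{H^1}$, so the BDG estimate must be applied with this random factor kept inside the expectation and then absorbed via the uniform moments of Lemma \ref{lem:4.1}; the rest is a routine application of the Kolmogorov and Arzel\`a--Ascoli criteria.
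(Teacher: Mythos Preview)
Your argument is correct, but it differs from the paper's proof. The paper does not decompose $u^n$ into drift plus martingale; instead it applies It\^o's formula directly to $\|u^n_t-u^n_s\|_{L^2}^2$, obtaining a drift term, a bounded trace term, and a martingale whose quadratic variation is controlled by $\delta\cdot\sup_{r\in[s,s+\delta]}\|u^n_r-u^n_s\|_{L^2}^2$. After absorbing the drift via Young's inequality and applying BDG to the supremum of the martingale, this yields the Billingsley-type estimate $\sup_n\sup_s\delta^{-1}\mc E\big(\sup_{t\in[s,s+\delta]}\|u^n_t-u^n_s\|_{L^2}^{2p}\big)\to0$, which gives the modulus-of-continuity condition directly without invoking Kolmogorov's criterion.

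Your route---Cauchy--Schwarz on the drift integral plus BDG and Kolmogorov for the stochastic convolution in $H^1$---is slightly more modular and in fact extracts more information (H\"older continuity of $S^n$ in $H^1$), at the cost of an extra appeal to Kolmogorov. The paper's approach is more hands-on and avoids Kolmogorov entirely, but requires the specific It\^o computation for the squared $L^2$ increment. Both rest on the same a priori input \eqref{a4bis}. One small remark: you phrase everything as ``uniformly in $n$'', but the family is indexed by $(n,\ell,\eta)$ and the statement asserts tightness in all three. Since all your estimates come from \eqref{a4bis}, which is uniform in $n,\ell,\eta$, your argument does give this; just make the uniformity explicit.
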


\begin{proof}
In view of compact embedding $H^1 \hookrightarrow L^2$, a sufficient condition for a subset $A$ of $C(L^2)$ to be precompact is that
\be
\label{cc}
\sup_{u\in A} \sup_{0\le t \le T}  \| u_t\|_{H^1} < +\infty\;, \qquad  \lim_{\delta\to 0} \sup_{u\in A} \omega(u;\delta) = 0\;, 
\ee
where $\omega(u;\delta)$ is the modulus of continuity of the element $u\in C(L^2)$, i.e.,
\begin{equation}
\label{contmod}
\omega(u;\delta) := \sup_{\substack{t,s\in[0,T]\\ |t-s|\le \delta}} \| u_t-u_s\|_{L^2}\;.
\end{equation}
The family $(\bb P^n_{\ell,\eta})$ is tight if the following conditions are fulfilled.

(i) For each $\zeta>0$ there exists $a>0$ such that
\[
\bb P^n_{\ell,\eta}\Big( \sup_{0\le t \le T}  \| u_t\|^2_{H^1}>a\Big) \le \zeta \qquad \forall\, n,\ell,\eta\;.
\]

(ii) For each $\eps>0$ and $\zeta>0$ there exists $\delta\in (0,T)$ such that
\[
\bb P^n_{\ell,\eta}\big(\omega(u;\delta)>\eps\big) \le \zeta \qquad \forall\, n,\ell,\eta\;.
\]

Indeed, if (i) and (ii) are verified, given any $\zeta>0$ we can find $a>0$ and $\delta_k$, $k\in\bb N_0$, such that
\[
\bb P^n_{\ell,\eta}\Big( \sup_{0\le t \le T}  \| u_t\|^2_{H^1}\le a\Big) > 1 - \frac \zeta 2\;, \qquad 
\bb P^n_{\ell,\eta}\Big(\omega(u;\delta_k)\le \frac 1k\Big) > 1-\frac\zeta{2^{k+1}}\;.
\] 
Therefore, the closure $K_\zeta$ of the set
\[
\Big\{u\colon \sup_{0\le t \le T}  \| u_t\|^2_{H^1}\le a\;, \;\; \omega(u;\delta_k)\le \frac 1k\;\;\forall\,k\in \bb N_0\Big\}
\]
is compact in view of \eqref{cc} and has probability $\bb P^n_{\ell,\eta}(K_\zeta)>1-\zeta$. 

Now, we claim that, for any $p\ge 1$,
\begin{equation}
\label{a5}
\sup_{\ell,\eta, n} \mc E \Big( \sup_{0\le t \le T}  \| u^n_t\|^{2p}_{H^1} \Big) <\infty \, 
\end{equation}
and, for each $p>1$,
\begin{equation}
\label{a5b}
\lim_{\delta \to 0}\sup_{\ell,\eta, n} \sup_{s\in [0,T-\delta]} \frac 1 \delta \mc E \Big(\sup_{t\in [s,s+\delta]} \|u_t-u_s\|_{L^2}^{2p}\Big) =0 \;.
\end{equation}
By Chebyshev's inequality, \eqref{a5} implies (i) and, by a simple inclusion of events, see, e.g., \cite[Eq.\ (8.9)]{Bi}, and again Chebyshev's inequality, \eqref{a5b} implies (ii). 

The estimate \eqref{a5} is a direct consequence of \eqref{a4bis} since it can be easily checked that, in view of the assumptions on $W$, there exists $C>0$ such that $\| u\|^2_{H^1} \le C\big(1+\mc F_{\ell,\eta}(u) \big)$ for any $\ell,\eta$. 

To prove \eqref{a5b} we observe that by \eqref{un0}-\eqref{uni} and It\^o's formula, for each $s\in [0,T-\delta]$ and
$t\in [s,s+\delta]$,
\be
\label{rp1}
\| u^n_t -u^n_s\|_{L^2}^2 = A^{s,n}_t  + R^{s,n}_t + M^{s,n}_t\;,
\ee
where 
\[
A^{s,n}_t := 2\int_s^t\! \int\! \sigma(v^n_r) \big[\Delta u^n_r -R_\eta W_\ell'(R_\eta u^n_r)\big](u^n_r -u^n_s) \, \rmd x\, \rmd r \;,
\]
and 
\be
\label{rp2}
R^{s,n}_t = \int_s^t \! \int\! \big( j * \sqrt{\sigma(v^n_r)} \big)^2 \, \rmd x\, \rmd r \le \|\sigma\|_\infty \|j\|_{L^2}^2\, \delta\;;
\ee
finally, $M^{s,n,\phi}_t$, $t\in [s,s+\delta]$, is a continuous square integrable $\mc P$-martingale with quadratic variation,
\be
\label{rp}
\begin{split}
\big[ M^{s,n}\big]_t & = 4 \int_s^t \! \int  \big[ j *\big( \sqrt{\sigma(v^n_r)}(u^n_r -u^n_s)  \big)\big]^2 \, \rmd x\, \rmd r \\ & \le 4 \|\sigma\|_\infty \sup_{r\in [s,s+\delta]} \|u^n_r-u^n_s\|_{L^2}^2\, \delta\;,
\end{split}
\ee

By Cauchy-Schwartz inequality,
\[
\begin{split}
|A^{s,n}_t| & \le 2 \|\sigma\|_\infty \Big(\int_s^t \| u^n_r -u^n_s\|_{L^2}^2 \, \rmd r\Big)^{\frac 12} \\ & \qquad\qquad \times  \Big(\int_s^t\! \int\! \sigma(v^n_r) \big(\Delta u^n_r -R_\eta W_\ell'(R_\eta u^n_r)\big)^2 \, \rmd x\, \rmd r\Big)^{\frac 12}  \\ & \le 2\,\delta^{\frac 12} \, \|\sigma\|_\infty\sup_{r\in [s,s+\delta]} \|u^n_r-u^n_s\|_{L^2} \\ &  \qquad\qquad \times  \Big(\int_{0}^{T}\! \int\!\sigma(v^n_r) \big(\Delta u^n_r -R_\eta W'_\ell (R_\eta u^n_t)\big)^2 \rmd x\, \rmd r \Big)^{\frac 12}\;,
\end{split}
\]
so that, by Young's inequality, there exists $C>0$ such that
\[
|A^{s,n}_t| \le \frac 12 \sup_{r\in [s,s+\delta]} \|u^n_r-u^n_s\|_{L^2}^2 + C\delta\int_{0}^{T}\! \int\!\sigma(v^n_r) \big(\Delta u^n_r -R_\eta W'_\ell (R_\eta u^n_t)\big)^2 \rmd x\, \rmd r\;.
\]
Therefore, taking the supremum for $t\in [s,s+\delta]$ in \eqref{rp1} we deduce,
\be
\begin{split}
\label{rp1b}
\sup_{t\in[s,s+\delta]}\| u^n_t -u^n_s\|_{L^2}^2 & \le 2\sup_{t\in[s,s+\delta]}M^{s,n}_t  + 2\sup_{t\in[s,s+\delta]}R^{s,n}_t \\ & \quad + 2C\delta\int_{0}^{T}\! \int\!\sigma(v^n_r) \big(\Delta u^n_r -R_\eta W'_\ell (R_\eta u^n_t)\big)^2 \rmd x\, \rmd r\;.
\end{split}
\ee

By BDG inequality, see, e.g., \cite{RY}, for any $p>1$ there exists $C=C_p$ such that
\[
\mc E \Big(\sup_{t\in[s,s+\delta]} (M^{s,n}_t)^p \Big)  \le C \, \mc E\big( \big[ M^{s,n}\big]_{s+\delta}^{p/2}\big) \le \frac 1{2^{p+1}} \mc E \Big(\sup_{r\in [s,s+\delta]} \|u^n_r-u^n_s\|_{L^2}^{2p}\Big) + C \delta^p\;,
\]
where we used the bound \eqref{rp} and Young's inequality in the second step. By taking the $p$-th power and then the expectation value in \eqref{rp1b}, the last bound, together with \eqref{a4bis} and \eqref{rp2} implies the claim \eqref{a5b}. 
\end{proof}

\begin{lemma}[Properties of the cluster points]
\label{lem:4.3}
Let $\bb P$ be a cluster point of the sequence $(\bb P^n_{\ell,\eta})$. Then $\bb P$ is a martingale solution to \eqref{1} with initial condition $\bar u_0$. Furthermore, $\bb P$ satisfies the bound \eqref{a4terdue}.
\end{lemma}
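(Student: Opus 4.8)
The plan is to pass to the limit along a subsequence $\bb P^{n_k}_{\ell_k,\eta_k}$ converging weakly to $\bb P$, where we also send $\ell\to\infty$ and $\eta\to0$. By Lemma \ref{lem:4.2} such subsequences exist. First I would record that the apriori bound \eqref{a4bis}, together with lower semicontinuity of $\mc F$ and $\mc W$ under the relevant weak/strong limits and the Skorokhod representation theorem, immediately yields \eqref{a4terdue}: on a common probability space one has $u^{n_k}\to u$ in $C(L^2)$ almost surely, hence $u_t\in H^1$ for all $t$ with $\nabla u^{n_k}_t\rightharpoonup \nabla u_t$ weakly, and $R_{\eta_k}u^{n_k}\to u$ strongly in the appropriate sense so that by Fatou and weak lower semicontinuity $\sup_t \mc F(u_t)$ and $\int_0^T \mc W(u_t)\,\rmd t$ are bounded in every $L^p(\bb P)$ by the right-hand side of \eqref{a4bis}. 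This also gives $\bb P(u\in L^\infty(H^1)\cap L^2(H^2))=1$ and, via the uniform modulus-of-continuity estimate \eqref{a5b}, that $u$ has continuous $L^2$-paths, which is automatic on $C(L^2)$. The condition $\bb P(u_0=\bar u_0)=1$ follows since $v^{n}_{t^n_0}=\imath_n*\bar u_0\to\bar u_0$ in $L^2$ and $u^n_0=\bar u_0$.

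Next I would identify the martingale problem. Fix $\psi\in C^\infty([0,T]\times\bb T^d)$. For the approximating process, testing \eqref{un0}--\eqref{uni} against $\psi$ and using that $v^n$ is piecewise constant, one gets that
\[
M^{\psi,n}_t:=\int u^n_t\psi_t\,\rmd x-\int u^n_0\psi_0\,\rmd x-\int_0^t\!\int\!\Big[u^n_s\partial_s\psi_s+\sigma(v^n_s)\big(\Delta u^n_s-R_\eta W'_\ell(R_\eta u^n_s)\big)\psi_s\Big]\rmd x\,\rmd s
\]
is a continuous square-integrable $\mc P$-martingale with quadratic variation $2\int_0^t\!\int[j*(\sqrt{\sigma(v^n_s)}\psi_s)]^2\rmd x\,\rmd s$. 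The delicate point, which I expect to be the main obstacle, is the passage to the limit in the nonlinear drift term $\int_0^t\!\int\sigma(v^n_s)\Delta u^n_s\,\psi_s$ and $\int_0^t\!\int\sigma(v^n_s)R_\eta W'_\ell(R_\eta u^n_s)\psi_s$. Here one must combine: (a) $v^n_s\to u_s$ in $L^2$ for a.e.\ $s$ (using \eqref{a1.5}, the continuity of paths, and the fact that $t^n_{i}-t^n_{i-1}=T/n\to0$ together with the modulus estimate \eqref{a5b}), hence $\sigma(v^n_s)\to\sigma(u_s)$ in every $L^p$ by boundedness of $\sigma$; (b) the $L^2(H^2)$ bound from \eqref{a4bis} giving $\Delta u^n_s\rightharpoonup\Delta u_s$ weakly in $L^2([0,T]\times\bb T^d)$ along the subsequence; and (c) $R_{\eta_k}u^{n_k}\to u$ strongly in $L^2([0,T];L^p)$ for $p<6$ via the $H^1$ bound and $\eta_k\to0$, so that $W'_{\ell_k}(R_{\eta_k}u^{n_k})\to W'(u)$ in $L^2([0,T]\times\bb T^d)$ using $|W'_\ell(v)|\le C(|v|^3+1)$ uniformly in $\ell$ and $d\le3$ (note $R_{\eta_k}\psi_s\to\psi_s$ strongly too). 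The product of a weakly convergent sequence with a strongly convergent one converges, which handles $\sigma(v^n_s)\psi_s\,\Delta u^n_s$; the reaction term converges by the strong convergences in (a) and (c). One must also check $W'_{\ell_k}$ does not spoil things: since $\sup_k\mc E(\sup_t\|u^{n_k}_t\|_{H^1}^{2p})<\infty$ and $\ell_k\to\infty$, the set where $|R_{\eta_k}u^{n_k}|>\ell_k$ has vanishing measure in an integrated sense, so $W'_{\ell_k}(R_{\eta_k}u^{n_k})$ and $W'(R_{\eta_k}u^{n_k})$ have the same $L^2$-limit.

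Finally, I would conclude that $M^\psi$ as defined in \eqref{a0} is a $\bb P$-martingale with the quadratic variation \eqref{a0.5}. This is the standard closing argument: for $0\le s_1\le\cdots\le s_m\le s\le t$ and bounded continuous $G\colon (L^2)^m\to\bb R$, one passes to the limit in
\[
\mc E\Big[\big(M^{\psi,n_k}_t-M^{\psi,n_k}_s\big)\,G(u^{n_k}_{s_1},\dots,u^{n_k}_{s_m})\Big]=0
\quad\text{and in}\quad
\mc E\Big[\big((M^{\psi,n_k}_t)^2-[M^{\psi,n_k}]_t-\ldots\big)G(\cdots)\Big]=0,
\]
using the almost-sure convergence from Skorokhod together with uniform integrability supplied by \eqref{a4bis} and \eqref{a5} to upgrade the convergences in probability to convergence of expectations; the quadratic-variation term converges because $\sqrt{\sigma(v^{n_k}_s)}\psi_s\to\sqrt{\sigma(u_s)}\psi_s$ in $L^2([0,T]\times\bb T^d)$ and $j\in H^1\subset L^2$ so convolution with $j$ is continuous on $L^2$. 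This identifies $\bb P$ as a martingale solution and completes the proof.
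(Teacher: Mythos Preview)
Your plan is sound and arrives at the same conclusion, but it follows a genuinely different route from the paper. The paper does \emph{not} invoke Skorokhod. Instead it works directly with weak convergence of the laws $\bb P^n_{\ell,\eta}\to\bb P$ and introduces a regularized functional $G^\delta(u):=G(R_\delta u)$, which is continuous on $C(L^2)$ (since $R_\delta\colon L^2\to H^2$ is bounded), so that $\bb E^n_{\ell,\eta}[F\,G^\delta]\to\bb E[F\,G^\delta]$ holds for free. The passage $G_{n,\ell,\eta}\to G^\delta\to G$ is then carried out by proving \emph{uniform} convergence on explicit compact sets $\mc D_a\subset C(L^2)$ built from the a priori bounds and the modulus-of-continuity estimate, with the complements $\mc D_a^\mathrm{c}$ controlled by the moment bounds \eqref{a4ter}. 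Your Skorokhod route is more direct and avoids the $G^\delta$ device entirely; the paper's route avoids changing probability spaces and packages the weak--strong product argument into a deterministic uniform-approximation statement on compacta.

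One point in your argument deserves care. You write that the $L^2(H^2)$ bound gives $\Delta u^{n_k}\rightharpoonup\Delta u$ weakly in $L^2([0,T]\times\bb T^d)$ ``along the subsequence'', and later appeal to ``almost-sure convergence from Skorokhod together with uniform integrability''. But Skorokhod only gives a.s.\ convergence in $C(L^2)$; for a \emph{fixed} $\omega$ the sequence $\|u^{n_k}(\omega)\|_{L^2(H^2)}$ need not be bounded in $k$, so pathwise weak $L^2$-convergence of $\Delta u^{n_k}(\omega)$ is not automatic. The clean fix is to run the weak--strong argument in $L^2(\Omega\times[0,T]\times\bb T^d)$: the uniform bound $\sup_k\mc E\|u^{n_k}\|_{L^2(H^2)}^2<\infty$ gives $\Delta u^{n_k}\rightharpoonup\Delta u$ weakly there, while $G(u^{n_k}_{s_1},\dots)\,\sigma(v^{n_k})\,\psi\to G(u_{s_1},\dots)\,\sigma(u)\,\psi$ strongly there by a.s.\ convergence, boundedness, and dominated convergence. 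Pairing these yields the desired convergence of expectations without ever asserting a.s.\ convergence of $M^{\psi,n_k}_t$. With this adjustment your plan goes through.
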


\begin{proof}
Let $\bb P$ be a cluster point of the sequence $(\bb P^n_{\ell,\eta})$, so that, passing to a subsequence, $\bb P^n_{\ell,\eta} \to \bb P$ weakly.  

We start by proving the estimate \eqref{a4terdue}. In view of the assumptions on $W$ and the definition \eqref{wl} of $W_\ell$, there is $C>0$ such that $\| u\|^2_{H^1} \le C(1+\mc F_{\ell,\eta}(u))$ and, by Sobolev embedding, $\| R_\eta W'_{\ell} (R_\eta u)\|^2_{L^2} \le C (1+ \| u\|^6_{H^1})$ for any $u \in H^1$, where $C>0$ is independent of $\ell$ and $\eta$.

Hence, the bound \eqref{a4bis} combined with Calderon-Zygmund inequality readily implies, for any $p\ge 1$,
\begin{equation}
\label{a4ter}
\bb E^n_{\ell,\eta}  \left(\sup_{t\in [0,T]} \| u_t\|^{2p}_{H^1} + \| u\|_{L^2(H^2)}^{2p} \right) \le C \; , 
\end{equation}
where $C>0$ depends only on $\|\bar{u}_0\|_{H^1}$, $T$ and $p$ but is independent of $n$, $\ell$, and $\eta$. Since both the norms in \eqref{a4ter} are lower semicontinuous under $C(L^2)$-convergence, by Portmanteau's Theorem we infer that for any $p \ge 1$ the bound \eqref{a4terdue} holds with the same constant $C>0$ in \eqref{a4ter}.

Now, we show that $\bb P$ is a martingale solution to \eqref{1} with initial condition $\bar u_0$. By construction, $\bb P^n_{\ell,\eta}( u_0 =\bar u_0)=1$ for any $n, \ell, \eta$ so that $\bb P( u_0 =\bar u_0)=1$. Furthermore, by \eqref{a4terdue}, $\bb P (u \in L^\infty(H^1)\cap L^2(H^2) )=1$.

It remains to prove that for any $\psi\in C^\infty([0,T]\times \bb T^d)$ the process $M^\psi$ as defined in \eqref{a0} is a continuous square integrable $\bb P$-martingale with quadratic variation as in \eqref{a0.5}.

Fix $0\le s< t\leq T$, $\psi\in C^\infty([0,T]\times \bb T^d)$, and for $u \in L^\infty(H^1) \cap L^2(H^2)$ let
\begin{equation}
\label{gnle}
\begin{split}
G_{n,\ell,\eta}(u) & := \langle u_t,\psi_t\rangle_{L^2} - \langle u_s,\psi_s\rangle_{L^2}  \\ & \quad - \int_s^t \! \big\{\langle u_r, \partial_r \psi_r\rangle_{L^2} + \langle \sigma(v^n_r) \big[ \Delta u_r -R_\eta W'_\ell (R_\eta u_r)\big], \psi_r\rangle_{L^2} \big\}\, \rmd r\;,
\end{split}
\end{equation}
with $v^n$ the average of $u$ defined as in \eqref{a1.25}, \eqref{a1.5}. Similarly, let
\begin{equation}
\label{g}
\begin{split}
G(u) & := \langle u_t,\psi_t,\rangle_{L^2} - \langle u_s,\psi_s\rangle_{L^2}  \\ & \quad - \int_s^t \! \big\{ \langle u_r, \partial_r\psi_r \rangle_{L^2} + \langle \sigma(u_r) \big[ \Delta u_r - W' ( u_r)\big], \psi_r \rangle_{L^2}\big\}\, \rmd r\;,
\end{split}
\end{equation}
and, for $\delta>0$ and $u \in C(L^2)$, we define the regularized version of \eqref{g} as
\begin{equation}
\label{gdelta}
G^\delta(u)= G(R_\delta u) \;.
\end{equation}
Observe that the function defined in \eqref{gdelta} is continuous on $C(L^2)$ since $R_\delta: L^2 \to H^2$ continuously. Furthermore, by Sobolev and H\"older inequalities and since $R_\eta$ contracts any $L^p$ norm, 
\begin{equation}
\label{gnlegrowth}
|G_{n,\ell,\eta}(u)|+ |G(u)| + |G^\delta(u)|\le C(1+ \| u\|_{L^2(H^2)}+ \| u\|^3_{L^\infty(H^1)})\;,  
\end{equation}
where $C>0$ does not depend on $n, \ell, \eta$, and $\delta$. As a consequence, by \eqref{a4ter} and \eqref{a4terdue} we get, for any $p \ge 1$,
\begin{equation}
\label{attesap} 
\bb E^n_{\ell,\eta} \left( |G^\delta|^p+ |G_{n,\ell,\eta}|^p \right) +\bb E  \left(|G |^p+ |G^\delta|^p  \right) \le C\;,
\end{equation}
where $C>0$ does not depend on $n, \ell, \eta$, and $\delta$.

In view of \eqref{a1.25}-\eqref{uni}, for any $F\colon C(L^2)\to \bb R$ continuous, bounded, and measurable with respect to the canonical filtration at time $s$ we have,
\begin{equation}
\label{mp1n}
\bb E^n_{\ell,\eta}( F(u) G_{n,\ell,\eta}(u)) =0  
\end{equation} 
and
\begin{equation}
\label{mp2n}
\bb E^n_{\ell,\eta} \left( F(u)  \left[ G_{n,\ell,\eta}(u)^2 -\int_s^t\! \int\! \left( j*(  \sqrt{2 \sigma(v^n_r)} \psi_r) \right)^2 \,\rmd x\,\rmd r\right] \right)=0\;.
\end{equation} 
We would like to pass to the limit in \eqref{mp1n}-\eqref{mp2n} as $n , \ell \to +\infty$ and $\eta \to 0$ in order to conclude that
\begin{equation}
\label{mp1}
\bb E(F(u) G(u)) =0  
\end{equation} 
and
\begin{equation}
\label{mp2}
\bb E \left( F(u)  \left[ G(u)^2 -\int_s^t\!\int\! \left( j*(  \sqrt{2 \sigma(u_r)} \psi_r) \right)^2\,\rmd x\,\rmd r\right] \right)=0\;,
\end{equation} 
which, by the arbitrariness of $F$ and $0\le s< t \le T$, shows that $M^\psi$ as defined in \eqref{a0} is a continuous $\bb P$-martingale with quadratic variation as in \eqref{a0.5}. 

In order to prove \eqref{mp1} and \eqref{mp2} we use an approximation scheme based on \eqref{gdelta}. We fix a decreasing sequence $\delta_k \searrow 0$ and, for each $a>0$ we define $\mc D_a$ as the closure in $C(L^2)$ of the following set,
\begin{equation}
\label{seta}
\bigcap_{k\in\bb N}\Big\{  u \in L^\infty(H^1) \cap L^2(H^2) \colon \| u\|^2_{L^\infty(H^1)}+\| u\|^2_{L^2(H^2)} \le a \;,\;\;  \omega(u;\delta_k) < \frac 1k \Big\}\;,
\end{equation}
where $\omega$ is defined in \eqref{contmod}.
 
We observe that in view of \eqref{cc} the set $\mc D_a$ is compact for any $a>0$ . Moreover, by Lemma \ref{lem:4.1} and the argument in the proof of Lemma \ref{lem:4.2}, we can choose $\delta_k \searrow 0$ such that 
\begin{equation}
\label{sseta}
\lim_{a \to +\infty}  \sup_{n, \ell, \eta} \bb P^n_{\ell,\eta} (\mc D_a^\mathrm{c})=0 \;, \qquad \lim_{a \to +\infty}  \bb P (\mc D_a^\mathrm{c})=0 \;. 
\end{equation}
We claim that, for each $a>0$,
\begin{equation}
\label{unifconv}
\lim_{\delta \to 0} \varlimsup_{n, \ell ,\eta} \sup_{u \in \mc D_a} |G_{n,\ell,\eta}(u)-G^\delta(u)| =0 \;, \qquad  \lim_{\delta \to 0}  \sup_{u \in \mc D_a} |G(u)-G^\delta(u)| =0 \;.
\end{equation}
Postponing the proof of this claim we first derive \eqref{mp1}. We write,
\begin{equation}
\label{mp3}
\bb E^n_{\ell,\eta} (F G_{n,\ell,\eta})= \bb{E}^n_{ \ell,\eta} (F G^\delta) + \bb{E}^n_{\ell,\eta}( \id_{ \mc D_a} F(G_{n, \ell, \eta} -G^\delta)) + \bb{E}^n_{\ell,\eta}( \id_{ \mc D^\mathrm{c}_a} F(G_{n, \ell, \eta} -G^\delta))\;.
\end{equation}
Since $F$ is bounded, by \eqref{unifconv}, for any $a>0$,
\[ 
\lim_{\delta \to 0} \varlimsup_{n, \ell ,\eta} \bb{E}^n_{\ell,\eta}( \id_{ \mc D_a} |F(G_{n, \ell, \eta} -G^\delta)|)=0
\]
and, in view of \eqref{attesap}, \eqref{sseta}, and Chebyshev's inequality,
\[ 
\lim_{a\to\infty}  \varlimsup_{\delta \to 0} \varlimsup_{n, \ell ,\eta} \bb{E}^n_{\ell,\eta}( \id_{ \mc D^\mathrm{c}_a} |F(G_{n, \ell, \eta} -G^\delta)|)=0\;,
\]
hence, by \eqref{mp1n} and \eqref{mp3},
\[
\lim_{\delta \to 0} \lim_{n, \ell ,\eta}\bb{E}^n_{ \ell,\eta} (F(u) G^\delta(u)) =0 \;.
\]

Since $G^\delta \colon C(L^2) \to \mathbb{R}$ is continuous and satisfies \eqref{attesap} we get,
\[ 
0 = \lim_{\delta \to 0} \bb{E} (F G^\delta)=  \bb{E} (F G)+ \lim_{\delta \to 0} \bb E( F(G^\delta -G))\;.
\]
Finally, writing
\[ \bb E( F(G^\delta -G))= \bb{E}( \id_{ \mc D_a} F(G^\delta -G)) + \bb{E}( \id_{ \mc D^\mathrm{c}_a} F(G^\delta -G))\;,
\]
by using \eqref{attesap}, \eqref{sseta}, and \eqref{unifconv} as before we obtain \eqref{mp1}.

In order to prove \eqref{unifconv}, first notice that for $u \in C(L^2)$ and $v^n$ the average of $u$ defined as in \eqref{a1.25}, \eqref{a1.5} we have,
\begin{equation*}
\begin{split}
\| u-v^n\|_{L^\infty(L^2)} & \le \sup_{0 \le t<  \frac Tn} \|u_t- \imath_n* u_0 \|_{L^2} \vee \max_{i=1, \ldots, n-1} \sup_{t \in [t^n_i, t^n_{i+1})} \left\| u_t - \frac{n}T \int^{t^n_{i}}_{t^n_{i-1}}\! u_s \, \rmd s \right\|_{L^2} \\
& \le \omega(u; 2T/n)+ \| \imath_n *u_0 -u_0\|_{L^2} \;.
\end{split}
\end{equation*} 
Thus, by definition of $\mc D_a$, for each $a>0$,
\begin{equation}
\label{u-v}
\lim_{n \to \infty} \sup_{u \in \mc D_a} \| u-v^n\|_{L^\infty(L^2)} =0 \;.
\end{equation}

We define
\begin{equation}
\label{gle}
\begin{split}
G_{\ell,\eta}(u) & := \int\!u_t\psi_t \, \rmd x - \int\!u_s\psi_s\, \rmd x  \\ & \quad + \int_s^t\!\int\! \big\{ u_r \partial_r \psi_r + \sigma(u_r) \big[ \Delta u_r -R_\eta W'_\ell (R_\eta u_r)\big] \psi_r \big\} \, \rmd x\, \rmd r\; .
\end{split}
\end{equation}
Since $\sigma$ is Lipschitz and by Sobolev embedding $\| R_\eta W'_{\ell} (R_\eta u)\|^2_{L^2} \le C (1+ \| u\|^6_{H^1})$ for any $u \in H^1$, where $C>0$ is independent of $\ell$ and $\eta$, by definition of $\mc D_a$ and \eqref{u-v} we easily obtain,
\begin{equation}
\label{unifconvgle}
\begin{split}
& \varlimsup_{n, \ell ,\eta} \sup_{u \in \mc D_a} |G_{n,\ell,\eta}(u)-G_{\ell,\eta}(u)| \\ & \quad \le \varlimsup_{n, \ell ,\eta} \sup_{u \in \mc D_a} C \| u-v^n\|_{L^\infty(L^2)} (1+\|u\|_{L^2(H^2)}+ \| u\|^3_{L^\infty(H^1)}) =0 \;.
 \end{split}
\end{equation}

We are going to show that 
\begin{equation}
\label{unifconv2}
\varlimsup_{\ell ,\eta} \sup_{u \in \mc D_a} |G_{\ell,\eta}(u)-G(u)| =0 \;, \qquad  \lim_{\delta \to 0}  \sup_{u \in \mc D_a} |G(u)-G^\delta(u)| =0 \;,
\end{equation}
which clearly imply \eqref{unifconv} by \eqref{unifconvgle}.

Since $\psi$ is bounded and $R_\eta$ contracts also the $L^1$-norm we estimate,
\begin{equation}
\label{decgleta}
 \begin{split}
&|G_{\ell,\eta}(u)-G(u)| \le C \int_s^t \!\int\!\big| W'( u_r) -R_\eta W'_\ell (R_\eta u_r)\big| \, \rmd x\, \rmd r \, ,\\ &\quad \le C \int_s^t \!\int\!  \big| W'( u_r) -R_\eta W' ( u_r)\big| + \big|  W'( u_r) - W'_\ell ( u_r)\big|\, \rmd x\, \rmd r \\ & \qquad + C \int_s^t \!\int\! \big|  W_\ell'( u_r) - W'_\ell (R_\eta u_r)\big|  \, \rmd x\, \rmd r =I+I\!I+I\!I\!I\;.
\end{split}
\end{equation}
Notice that $\| R_\eta v-v \|^2_{L^2} \le  \eta \| v\|^2_{H^1}$ for any $v \in H^1$ and, by Sobolev and Holder inequalities, for any $v \in H^2$ we have, 
\[ \| W'(v)\|_{H^1} \le C   \| 1+|v|^3 \|_{H^1} \le C (1+ \|v\|^3_{H^1}+\|v\|_{H^2}\| v\|^2_{H^1} )\;, 
\]
hence,
\begin{equation}
\label{stimaI}
I\le C \int_s^t\! \|W'( u_r) -R_\eta W' ( u_r) \|_{L^2} \, \rmd r \le C \eta \int_s^t\! (1+ \|u_r\|^3_{H^1}+\|u_r\|_{H^2}\| u_r\|^2_{H^1} ) \, \rmd r\;.
\end{equation}
On the other hand, by the assumptions on $W$ and the definition \eqref{wl} of $W_\ell$, we have $|W'(u)|+|W'_\ell(u)| \le C (1+|u|^3)$ for a $C>0$ independent of $\ell$. Combining Cauchy-Schwartz, Sobolev, Chebyschev, and Young inequalities we get,
\begin{equation}
\label{stimaII}
\begin{split}
I\!I\le C \int_s^t\! \int_{|u_r|>l}\! (1+|u_r|^3) \, \rmd r \le C  \int_s^t\! (1+ \|u_r\|^3_{L^6} ) | \{ |u_r|>l\}|^{1/2} \, \rmd r \\ \le C  \int_s^t\! (1+ \|u_r\|^3_{H^1} ) \| u_r\|^{1/2}_{L^2} \ell^{-1/2} \, \rmd r \le C \ell^{-1/2} (1+ \| u\|^4_{L^\infty(H^1)}) \;. 
\end{split}
\end{equation}
Finally, noticing that $|W'_\ell(u)-W'_\ell(u')| \le C (1+|u|^2+|u'|^2)|u-u'|$ for an absolute constant $C>0$ independent of $\tau, \tau '$, and $\ell$, by Cauchy-Schwartz, Holder, and Sobolev inequalities, arguing as above we get,
\begin{equation}
\label{stimaIII}
\begin{split}
I\!I\!I & \le  C \int_s^t\! \int\! (1+|u_r|^2+|R_\eta u_r|^2) |u_r-R_\eta u_r| \,\rmd x \, \rmd r \\ & \le C  \int_s^t\! (1+ \|u_r\|^2_{L^4} )  \| u_r -R_\eta u_r \|_{L^2} \, \rmd r \\ & \le  C \eta^{1/2} \int_s^t\! (1+ \| u_r\|^2_{H^1}) \| u_r\|_{H^1} \, \rmd r \le C \eta^{1/2} (1+ \| u\|^3_{L^\infty(H^1)}) \;. 
\end{split}
\end{equation}
Combining \eqref{decgleta}-\eqref{stimaIII} the first claim in \eqref{unifconv2} follows. The proof of the second claim in  \eqref{unifconv2} is entirely similar. Indeed, first recall that $\sigma$ and $\psi$ are bounded and that $R_\delta$ and $\Delta$ commute on $H^2$. Thus, in view of \eqref{g} and \eqref{gdelta}, 
\begin{equation}
\label{decgdelta}
\begin{split}
& |G^\delta(u) -G(u)| \le C  \left( \| u_t -R_\delta u_t\|_{L^1} + \| u_s -R_\delta u_s\|_{L^1} + \int_s^t\! \| u_r-R_\delta u_r \|_{L^1} \, \rmd r \right) \\ & \quad\qquad + C \int_s^t\! \| W'(u_r)-W'(R_\delta u_r) \|_{L^1}  \, \rmd r \\ & \quad\qquad +\left| \int_s^t\! \int\!  \left( R_\delta \sigma( R_\delta u_r) \Delta u_r - \sigma(u_r) \Delta u_r \right) \psi_r   \, \rmd x \, \rmd r \right| =I'+I\!I'+I\!I\!I'\;.
\end{split}
\end{equation}
Since $\| R_\delta v-v \|^2_{L^2} \le  \delta \| v\|^2_{H^1}$ for any $v \in H^1$, we have,
\begin{equation}
\label{stimaI'}
I' \le C \| u -R_\delta u \|_{L^\infty (L^2)} \le C \delta^{1/2} \| u\|_{L^\infty (H^1)}\;, 
\end{equation} and 
arguing as in \eqref{stimaII} we also obtain,
\begin{equation}
\label{stimaII'}
I\!I' \le C \delta^{1/2} (1+ \| u\|^3_{L^\infty(H^1)})\;. 
\end{equation}
Combining \eqref{decgdelta}-\eqref{stimaII'}, the second claim in \eqref{unifconv2} follows once we prove that, for each $a>0$,
\begin{equation}
\label{unifconv3}
\lim_{\delta \to 0}  \sup_{u \in \mc D_a} \left| \int_s^t\! \langle R_\delta \sigma( R_\delta u_r) \Delta  u_r - \sigma(u_r) \Delta u_r, \psi_r \rangle_{L^2} \, \rmd r \right| =0 \;.
\end{equation}
We argue by contradiction and suppose that \eqref{unifconv3} fails. Then, there exists $a>0$, $\rho>0$, $\delta_k \to 0$, and a sequence $\{ u^{(k)} \} \subset \mc D_a$ such that, for each $k\ge 1$,
\begin{equation}
\label{contrhp}
\left| \int_s^t\! \langle R_{\delta_k} \sigma( R_{\delta_k} u^{k}_r) \Delta u^{k}_r - \sigma(u^{k}_r) \Delta u^{k}_r , \psi_r\rangle_{L^2} \rmd r \right| \ge \rho>0 \;.
\end{equation}
It is easy to check that $ \{ u^{k}\}\subset  \mc D_a \subset C(L^2)$ is equicontinuous and $\{u_t^{k}\} \subset H^1$ is equibounded; in view of the compact embedding $H^1 \hookrightarrow L^2$ we can apply the Ascoli-Arzel\`a theorem to infer that, up to subsequences, $u^{k} \to u \in C(L^2)$ as $k \to \infty$. Moreover, by standard lower semicontinuity argument is easy to check that $u \in \mc D_a$ and in addition $\Delta u^{k} \rightharpoonup \Delta u$ in $L^2([0,T] \times \bb T^d)$ as $k \to \infty$. 

Since  $u^{k} \to u$ in $C(L^2)$ and $\sigma$ is bounded and continuous we have $R_{\delta_k} u^{k} \to u$, $\sigma(u^{k}) \to \sigma(u)$, $\sigma(R_{\delta_k}u^{k}) \to   \sigma(u)$ and $R_{\delta_k}\sigma(R_{\delta_k}u^{k}) \to   \sigma(u)$ in $C(L^2)$ and in turn in $L^2([0,T] \times \bb T^d)$ as $k \to \infty$. As $\psi$ is bounded and smooth and $\Delta u^{k} \rightharpoonup \Delta u$ in $L^2([0,T] \times \bb T^d)$, as $k \to \infty$ we have,
\[  
\lim_{k \to \infty} \int_s^t\! \langle R_{\delta_k} \sigma( R_{\delta_k} u^{k}_r) \Delta u^{k}_r-  \sigma(u^{k}_r) \Delta u^{k}_r, \psi_r \rangle_{L^2}\, \rmd r =0 \;, 
\]
which contradicts \eqref{contrhp} and proves \eqref{unifconv3}. 

To deduce \eqref{mp2} from \eqref{mp2n} we first notice that
\begin{equation}
\label{mp4}
\begin{split}
\lim_{n ,\ell,\eta} \bb E^n_{\ell,\eta} \left( F(u)  \int_s^t\! \int\! \left( j*(  \sqrt{2 \sigma(v^n_r)} \psi_r) \right)^2 \, \rmd x \, \rmd r  \right) \\ =\bb E \left( F(u)  \int_s^t\!  \int\! \left( j*(  \sqrt{2 \sigma(u_r)} \psi_r) \right)^2 \, \rmd x \, \rmd r \right)\;.
\end{split}
\end{equation}
Indeed, as $u \mapsto F(u)  \int_s^t \int \big( j*(  \sqrt{2 \sigma(u_r)} \psi_r) \big)^2 \, \rmd x \, \rmd r$ is bounded and continuous, restricting the expectations to $\mc D_a$  and its complement the conclusion follows from \eqref{sseta} and \eqref{u-v}.

Finally, arguing as in the proof of \eqref{mp1} and using \eqref{attesap} for some $p>2$ we have,
\[ 
\lim_{n , \ell, \eta} \bb E^n_{\ell,\eta} \left(F(u)   G_{n,\ell,\eta}(u)^2 \right) =\bb E \left( F(u)   G(u)^2 \right)\;,
\]
which together with \eqref{mp2n} and \eqref{mp4} yields \eqref{mp2}.
\end{proof}

\section{Uniqueness results and strong existence}
\label{sec:5}

In this section we conclude the proof of Theorem \ref{t:eu}. To connect the notions of martingale and strong solutions we first introduce the notion of weak solution.

A pair $((\Omega,\mc G,\mc G_t,\mc P),(u,\alpha))$, where $(\Omega,\mc G,\mc G_t,\mc P)$ is a standard filtered probability space and $(u,\alpha)$ are $\mc G_t$-adapted processes, is a \emph{weak solution} to \eqref{1} with initial datum $\bar u_0$ iff 
\begin{itemize}
\item[i)] $\alpha\colon \Omega\to C(H^{-\bar{s}})$, $\bar{s}>d/2$, is a $L^2$-cylindrical Wiener process with respect to  $\mc G_t$, i.e., it is a $L^2$-cylindrical Wiener process and its increments $\alpha_t-\alpha_s$ are independent of $\mc G_s$ for $0\le s<t\in [0,T]$;
\item[ii)] $u\colon \Omega \to C(L^2)$, $\mc P (u_0=\bar u_0) =1$, and $\mc P(u \in L^\infty(H^1) \cap L^2(H^2)) =1$;
\item[iii)] for each $\psi\in C^\infty\big([0,T]\times \bb T^d\big)$ and $t\in [0,T]$, the identity \eqref{form} holds $\mc P$-a.s.
\end{itemize}

\emph{Pathwise uniqueness} of weak solutions holds if whenever $((\Omega,\mc G,\mc G_t,\mc P),(u,\alpha))$ and $((\Omega,\mc G,\mc G_t,\mc P),(u',\alpha'))$ are two weak solutions on the same filtered space with $\alpha=\alpha'$ then $\mc P(u_t=u'_t \; \;\forall\, t\in [0,T])=1$.

We remark that if a weak solution $((\Omega,\mc G,\mc G_t,\mc P),(u,\alpha))$ is such that $u$ is $\mc G^\alpha_t$-adapted (recall that $\mc G^\alpha_t$ denotes the filtration generated by $\alpha$ completed with respect to $\mc P$) then the map $u\colon \Omega \to C(L^2)$ is a strong solution on the probability space $(\Omega,\mc G,\mc P)$ equipped with the cylindrical Wiener process $\alpha$. 

By a martingale representation lemma, we first show that existence of weak solutions can be deduced from the existence of martingale solutions. 

\begin{lemma}
\label{lem:5.1}
Given $\bar u_0\in H^1$, let $\bb P$ be a martingale solution to \eqref{1} with initial condition $\bar u_0$. There exists a  weak solution $((\Omega,\mc G,\mc G_t,\mc P),(u,\alpha))$ to \eqref{1} such that $\mc P \circ u^{-1} =\bb P$.
\end{lemma}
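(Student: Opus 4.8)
The plan is to run the standard infinite-dimensional martingale-representation construction, paying attention to the degeneracy of the diffusion coefficient. Work on the canonical space $(C(L^2),\mc B,\bb P)$, endowed with the $\bb P$-augmentation $(\mc G_t)$ of the canonical filtration $(\mc B_t)$, and let $u$ be the canonical process, which is a martingale solution by hypothesis. Since $\bb P(u\in L^\infty(H^1)\cap L^2(H^2))=1$ and, by Sobolev embedding, $|W'(u_s)|\le C(1+|u_s|^3)$ gives $W'(u_s)\in L^2$ whenever $u_s\in H^1$, the map $s\mapsto\sigma(u_s)\big(\Delta u_s-W'(u_s)\big)$ lies $\bb P$-a.s.\ in $L^1([0,T];L^2)$; hence the $L^2$-valued process
\[
M_t:=u_t-u_0-\int_0^t\!\sigma(u_s)\big(\Delta u_s-W'(u_s)\big)\,\rmd s
\]
is $\bb P$-a.s.\ well defined, continuous, and $\mc G_t$-adapted. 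Testing $M_t$ against a time-independent $\psi\in C^\infty(\bb T^d)$ gives $\langle M_t,\psi\rangle_{L^2}=M^\psi_t$, with $M^\psi$ the process \eqref{a0} associated to the constant-in-time test function $\psi$; by the definition of martingale solution $M^\psi$ is a continuous martingale, and \eqref{a0.5} gives $[M^\psi]_t\le 2\|\sigma\|_\infty\|j\|_{L^2}^2\int_0^t\|\psi\|_{L^2}^2\,\rmd s$. Recalling from Section \ref{sec:2} the Hilbert--Schmidt operators $B(v)\colon L^2\to L^2$, $B(v)\psi=\sqrt{2\sigma(v)}\,j*\psi$, and summing the last bound over an orthonormal basis yields $\bb E\|M_t\|_{L^2}^2\le 2\|\sigma\|_\infty\|j\|_{L^2}^2\,T<\infty$; thus $M$ is a square integrable continuous $L^2$-valued $\mc G_t$-martingale.

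The next step is to identify the operator quadratic variation of $M$. Using $B(v)^*\phi=j*(\sqrt{2\sigma(v)}\,\phi)$, formula \eqref{a0.5} evaluated at constant-in-time test functions, together with polarization, gives $[M^\psi,M^\phi]_t=\int_0^t\langle B(u_s)B(u_s)^*\psi,\phi\rangle_{L^2}\,\rmd s$ for all $\psi,\phi\in C^\infty(\bb T^d)$, i.e.\ $\langle\!\langle M\rangle\!\rangle_t=\int_0^t B(u_s)B(u_s)^*\,\rmd s$. Moreover, $v\mapsto B(v)$ is Lipschitz from $L^2$ into the Hilbert--Schmidt operators on $L^2$: using $\|j*e_k\|_{L^\infty}\le|\hat{j}(k)|$ and that $\sqrt{\sigma}$ is Lipschitz (since $\sigma\in C^2$ is bounded and bounded away from $0$), one gets $\|B(v)-B(v')\|_{HS}^2\le C\|v-v'\|_{L^2}^2\sum_k|\hat{j}(k)|^2=C\|v-v'\|_{L^2}^2\|j\|_{L^2}^2$. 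Consequently $s\mapsto B(u_s)$ is continuous and $\mc G_t$-adapted, hence progressively measurable, with values in the Hilbert--Schmidt operators on $L^2$.

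Now I invoke an infinite-dimensional martingale representation theorem, see e.g.\ \cite[Thm.~8.2]{DaZ}: since in general $B(v)$ is neither injective nor surjective, the probability space must be enlarged. Set $\Omega:=C(L^2)\times\widetilde\Omega$, where $(\widetilde\Omega,\widetilde{\mc G},\widetilde{\mc G}_t,\widetilde{\mc P})$ carries an independent $L^2$-cylindrical Wiener process, put $\mc P:=\bb P\otimes\widetilde{\mc P}$, let $(\mc G_t)$ now denote the $\mc P$-augmentation of $(\mc B_t\otimes\widetilde{\mc G}_t)$, and keep the symbol $u$ for the lift to $\Omega$ of the canonical process, so that $\mc P\circ u^{-1}=\bb P$. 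Then there exists an $L^2$-cylindrical Wiener process $\alpha\colon\Omega\to C(H^{-\bar s})$, $\bar s>d/2$, whose increments $\alpha_t-\alpha_s$ are independent of $\mc G_s$, such that $M_t=\int_0^t B(u_s)\,\rmd\alpha_s$ holds $\mc P$-a.s.

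It remains to verify that $\big((\Omega,\mc G,\mc G_t,\mc P),(u,\alpha)\big)$ is a weak solution to \eqref{1}. Property i) holds by construction; ii) (and $\mc P\circ u^{-1}=\bb P$) hold since $u$ is the lift of the canonical process and $\bb P$ is a martingale solution, so the regularity and the initial condition are $\bb P$-a.s.\ valid. For iii), testing $M_t=\int_0^t B(u_s)\,\rmd\alpha_s$ against a time-independent $\psi\in C^\infty(\bb T^d)$ gives \eqref{form} in that case; the general case with time-dependent $\psi_s$ follows by the It\^o product rule applied to $t\mapsto\langle u_t,\psi_t\rangle_{L^2}$ (equivalently, by approximating $\psi_s$ by step functions in $s$ and using the continuity in $t$ of the resulting terms), and a single exceptional null set is obtained by letting the test function range over a countable subset dense in $C^\infty([0,T]\times\bb T^d)$ and using continuity in $(t,\psi)$. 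I expect the only genuinely delicate point to be the application of the martingale representation theorem: one must know that $s\mapsto B(u_s)$ is a predictable, Hilbert--Schmidt-operator-valued process with $\langle\!\langle M\rangle\!\rangle_t=\int_0^t B(u_s)B(u_s)^*\,\rmd s$ — which the two middle paragraphs supply — and handle the enlargement correctly, the enlargement being unavoidable because of the degeneracy of $B(v)$; the rest is routine.
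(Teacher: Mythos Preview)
Your proof is correct and follows essentially the same approach as the paper: you identify the $L^2$-valued continuous square-integrable martingale $M_t=u_t-u_0-\int_0^t\sigma(u_s)(\Delta u_s-W'(u_s))\,\rmd s$, compute its operator quadratic variation $\int_0^t B(u_s)B(u_s)^*\,\rmd s$, and apply the martingale representation theorem \cite[Thm.~8.2]{DaZ} on an enlarged probability space to obtain the cylindrical Wiener process $\alpha$. The paper is slightly terser in that it defines $M_t$ as $\sum_k M^{e_k}_t e_k$ and then identifies it with your formula, and it does not spell out the verification of \eqref{form} for time-dependent test functions, but the argument is otherwise the same.
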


\begin{proof}
Let $\bb P$ be a martingale solution. Recall \eqref{a0} and let $\{e_k\}$, $k \in  \mathbb{Z}^d$, be an orthonormal basis in $L^2$. We claim that the process $M=(M_t)_{t\in[0,T]}$ defined by $M_t := \sum_{k} M^{e_k}_t e_k$ is a $L_2$-valued, continuous square integrable $\bb P$-martingale with quadratic variation,
\be
\label{Mvrt}
[M]_t = \int_0^t\!  B(u_s) B(u_s)^*\,\rmd s\;,
\ee
where we recall $B(u) \colon L^2 \to L^2$ is the  Hilbert-Schmidt operator given by $B(u)\psi = \sqrt{2 \sigma(u)} j *  \psi$. 

The martingale property of $M$ is obvious and \eqref{Mvrt} is a direct consequence of \eqref{a0.5}. Since $\| B(u)\|_{HS}$ is bounded uniformly w.r.t.\ $u \in L^2$, $M$ is square integrable. Moreover, by \eqref{a0},
\[
M_t = u_t - u_0 - \int_0^t\! \sigma(u_s)\Big(\Delta u_s - W'(u_s)\Big)\,\rmd s \quad\forall\,t\in [0,T] \qquad\bb P\mbox{-a.s.}\;,
\]
where the identity has to be understood between elements of $L^2$. Since $\bb P(u \in L^\infty(H^1) \cap L^2(H^2))=1$ we deduce the $\bb P$-a.s.\ continuity of $M$. 

In view of the previous claim, we can apply the representation theorem \cite[Thm.\ 8.2]{DaZ} and deduce the existence of an  enlargement of the filtered probability space $(C(L^2),\mc B, \mc B_t, \bb P)$, denoted by $(\Omega,\mc G,\mc G_t,\mc P)$, equipped with a cylindrical Wiener process $\alpha\colon \Omega\to C(H^{-\bar{s}})$, $\bar{s}>d/2$, and a $\mc G_t$-progressively measurable map $u\colon \Omega \to C(L^2)$  such that $\mc P \circ u^{-1} =\bb P$ and $M_t = \int_0^t \! B(u_s)\,\rmd \alpha_s$. In particular, $((\Omega,\mc G,\mc G_t,\mc P),(u,\alpha))$ is a weak solution to \eqref{1} with initial condition $\bar u_0$.  
\end{proof}

By the previous lemma and It\^o formula we next show the continuity property of the trajectories for martingale solutions.

\begin{lemma}
\label{lem:5.2}
Given $\bar u_0\in H^1$, let $\bb P$ be a martingale solution to \eqref{1} with initial condition $\bar u_0$. Then, $\bb P (u \in C(H^1))=1$.
\end{lemma}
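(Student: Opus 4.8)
The plan is to reduce, via Lemma~\ref{lem:5.1}, to a weak solution $((\Omega,\mc G,\mc G_t,\mc P),(u,\alpha))$ with $\mc P\circ u^{-1}=\bb P$, to run It\^o's formula on the free energy $\mc F(u_t)$, to deduce that $t\mapsto\mc F(u_t)$ is $\mc P$-a.s.\ continuous, and then to upgrade weak $H^1$-continuity of $u$ to norm continuity using the compact embedding $H^1\hookrightarrow L^4$, available since $d\le3$. First I would record two elementary consequences of $u\in C(L^2)\cap L^\infty(H^1)$: $\mc P$-a.s.\ one has $u_t\in H^1$ for \emph{every} $t\in[0,T]$, $\sup_{t\in[0,T]}\|u_t\|_{H^1}<\infty$, and $s\mapsto u_s$ is weakly continuous into $H^1$ (any $t$ is a limit of times where $\|u_\cdot\|_{H^1}$ is bounded, and $L^2$-convergence together with $H^1$-boundedness identifies the weak limit as $u_t$ by Rellich's theorem). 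Moreover, since $d\le3$ one has $\|v\|_{L^4}\le C\|v\|_{L^2}^{1-\theta}\|v\|_{H^1}^{\theta}$, so $s\mapsto u_s$ is continuous into $L^4$; hence, by Assumption~\ref{t:ws}(3) (which gives $|W(a)-W(b)|\le C(1+|a|^3+|b|^3)|a-b|$), the map $s\mapsto\int W(u_s)\,\rmd x$ is $\mc P$-a.s.\ continuous on $[0,T]$.

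Next I would establish an It\^o formula for $t\mapsto\mc F(u_t)$. Since $\mc F$ is not $C^2$ on $L^2$, I would argue exactly as in the proofs of Proposition~\ref{daprato} and Lemma~\ref{lem:4.1}: regularise $\mc F$ by the resolvent $R_\delta$ (truncating $W$ at level $\ell$ and mollifying its argument by $R_\eta$ if convenient), apply It\^o's formula \cite[Thm.~4.17]{DaZ} to the resulting $C^2$ functional along the weak-solution identity \eqref{form}, and let $\delta\to0$ (then $\ell\to\infty$, $\eta\to0$) exploiting $u\in L^\infty(H^1)\cap L^2(H^2)$. Because only $\mc P$-a.s.\ integrability is available here, I would localise with the stopping times $\tau_N:=\inf\{t\colon\|u_t\|_{H^1}^2+\int_0^t\mc W(u_s)\,\rmd s>N\}$, which increase to $T$ by the previous paragraph. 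The outcome is, $\mc P$-a.s.\ for $t\le\tau_N$,
\[
\mc F(u_t)=\mc F(\bar u_0)-\int_0^t\!\mc W(u_s)\,\rmd s+\tfrac12\int_0^t\!\mathrm{Tr}_{L^2}\big((-\Delta+W''(u_s))B(u_s)B(u_s)^*\big)\,\rmd s+N_t,
\]
where $N$ is a continuous local martingale with $[N]_t=\int_0^t\|B(u_s)^*(-\Delta u_s+W'(u_s))\|_{L^2}^2\,\rmd s\le C\int_0^t\mc W(u_s)\,\rmd s$.

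Then I would observe that every term on the right-hand side is $\mc P$-a.s.\ continuous in $t$: $\int_0^t\mc W(u_s)\,\rmd s$ because $\mc W(u_\cdot)\in L^1([0,T])$ thanks to $\mc W(u)\le C(1+\|u\|_{H^2}^2+\|u\|_{H^1}^6)$ and $u\in L^2(H^2)\cap L^\infty(H^1)$; the trace term because, arguing as in Lemma~\ref{lem:4.1}, its integrand is bounded by $C\|j\|_{H^1}^2(1+\|u_s\|_{H^1}^2)+C\|j\|_{L^2}^2\|\sigma\|_\infty\int(\sqrt{W(u_s)}+1)\,\rmd x\le C(1+\sup_t\mc F(u_t))$, where Assumption~\ref{t:ws}(4) is precisely what renders $\int|W''(u_s)|\,\rmd x$ controllable by $\mc F(u_s)$; and $N$ because its quadratic variation is $\mc P$-a.s.\ finite. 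Letting $N\to\infty$, $t\mapsto\mc F(u_t)$ is $\mc P$-a.s.\ continuous on $[0,T]$.

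Finally, for $s\to t$ one has $u_s\to u_t$ in $L^2$, hence in $L^4$ by the first paragraph, hence $\int W(u_s)\,\rmd x\to\int W(u_t)\,\rmd x$; together with $\mc F(u_s)\to\mc F(u_t)$ this gives $\|\nabla u_s\|_{L^2}^2=2\mc F(u_s)-2\int W(u_s)\,\rmd x\to\|\nabla u_t\|_{L^2}^2$. Since also $\nabla u_s\rightharpoonup\nabla u_t$ weakly in $L^2$ (from $L^2$-convergence of $u_s$ and boundedness of $\|\nabla u_s\|_{L^2}$), convergence of the norms upgrades this to strong convergence $\nabla u_s\to\nabla u_t$ in $L^2$; combined with $u_s\to u_t$ in $L^2$ this yields $u_s\to u_t$ in $H^1$. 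Thus $u\in C(H^1)$ $\mc P$-a.s., i.e.\ $\bb P(u\in C(H^1))=1$. The main obstacle is the It\^o-formula step: the regularisation/limit procedure and, above all, the control of the $W''$-trace term with merely the $L^\infty(H^1)$ bound at hand, which is exactly where Assumption~\ref{t:ws}(4) enters; the remaining steps are soft, resting on interpolation, the compact embedding, and the Hilbert-space fact that weak convergence plus convergence of norms implies strong convergence.
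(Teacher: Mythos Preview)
Your proposal is correct and follows essentially the same route as the paper: reduce to a weak solution via Lemma~\ref{lem:5.1}, observe weak $H^1$-continuity from $u\in C(L^2)\cap L^\infty(H^1)$, derive an It\^o formula for $\mc F(u_t)$ by regularisation and localisation, use it to get continuity of $t\mapsto\mc F(u_t)$, and then upgrade to strong $H^1$-continuity via continuity of $\int W(u_t)\,\rmd x$ and the Hilbert-space fact that weak convergence plus norm convergence yields strong convergence. The only cosmetic difference is that the paper localises with $\tau_\kappa=\inf\{t:\int_0^t\|u_s\|_{H^2}^2\,\rmd s>\kappa\}$, which is manifestly a stopping time since the integrand is in $L^1$; your $\tau_N$ additionally involves $\|u_t\|_{H^1}^2$, whose continuity is precisely what is being proved, so to be safe you should either drop that term (it is redundant anyway since $\sup_t\|u_t\|_{H^1}<\infty$ a.s.) or note that lower semicontinuity of $t\mapsto\|u_t\|_{H^1}$ together with the usual conditions on the filtration suffices.
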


\begin{proof}
Since $\bb P (u \in C(L^2)\cap L^\infty(H^1))=1$, we already know that $\bb P$-a.s.\ the trajectories are $H^1$-weak continuous, so we have only to show the $\bb P$-a.s.\ continuity of the real-valued process $t\mapsto \|u_t\|_{H^1}$. To this end, let $((\Omega,\mc G,\mc G_t,\mc P),(u,\alpha))$ be the weak solution associated to $\bb P$ as constructed in Lemma \ref{lem:5.1}. We shall  prove the $\mc P$-a.s.\ continuity of the map $t\mapsto \mc F(u_t)$, where $\mc F \colon H^1 \to \bb R$ is the functional,
\begin{equation} 
\label{fe}
\mc F(u)= \int\! \frac12 |\nabla u|^2 + W(u) \, \rmd x\;.
\end{equation}
Note indeed the map $[0,T] \ni t \mapsto \int\! W(u_t) \,\rmd x$ is $\mc P$-a.s.\ continuous since $\mc P(u\in C(L^2)\cap L^\infty(H^1))=1$.
 
In order to apply It\^o's formula to $\mc F$, we proceed by approximation as in the proof of  Lemma \ref{lem:4.1}. Given $\delta>0$ and $\ell>0$, let $\mc F^\delta_\ell \colon L^2 \to \mathbb{R}$ be the regularized version of $\mc F$ defined by
\begin{equation}
\label{fe1}
\mc F^\delta_\ell(u)= \int\! \frac12 | \nabla R_\delta u|^2 + W_{\ell}(R_\delta u) \, \rmd x\;,
\end{equation}
where, as usual, $R_\delta = (\mathrm{Id}-\delta\Delta)^{-1}$ and $W_\ell$ is defined in \eqref{wl}. Since $\mc F^\delta_\ell$ is $C^2$ with locally uniformly continuous first and second derivatives, we can apply It\^o's formula and deduce,
\[
\begin{split}
  & \mc F^\delta_\ell(u_t) + \int_0^t\!\int\!
  \sigma(u_s)  (\Delta u_s -W'(u_s))( R_\delta \Delta R_\delta u^n_s-R_\eta W'_\ell(R_\delta u^n_s))\, \rmd x \, \rmd s 
  \\
  & \quad 
  =  \mc F^\delta_\ell(u_0)
  + \frac 12 \int_0^t\!  {\rm Tr}_{L^2} (B(u_s)^* \left[ R_\delta (-\Delta) R_\delta + R_\delta W''_{\ell}(R_\delta u_s) R_\delta \right] B(u_s)) \, \rmd s \\ & \qquad + \int_0^t  \left\langle  R_\delta (-\Delta) R_\delta u_s+R_\delta W'_\ell(R_\delta u_s), B(u_s)\, \rmd\alpha_s \right\rangle_{L^2}\quad\forall\,t\in [0,T] \qquad\mc P\mbox{-a.s.} \;.
\end{split}
\]
Recall that, by definition of martingale solution, $\bb P(u \in L^\infty(H^1) \cap L^2(H^2))=1$. 
Given $\kappa >0$ let
\[
\tau_\kappa := \inf \bigg\{t\in [0,T] \colon  \int_0^t \! \|u_s\|_{H^2}^2 \, \rmd s >\kappa\bigg\} \;,
\]
setting $\tau_\kappa=T$ if the set in the right-hand side is empty. Note that $\tau_\kappa \uparrow T$ as $\kappa\to\infty$ $\bb P$-a.s.. By stopping at $\tau_\kappa$ the It\^o 's formula above, straightforward estimates (similar to those in the proof of  Lemma \ref{lem:4.1}) allow to take the limit $\delta\to 0$ and $\ell\to \infty$. By taking afterwards the limit as $\kappa \to \infty$ we finally get,
\[
\begin{split}
  & \mc F(u_t) + \int_0^t\!\int\!\sigma(u_s)  (\Delta u_s -W'(u_s))^2\, \rmd x \, \rmd s \\ & \quad =  \mc F(u_0) + \frac 12 \int_0^t\! {\rm Tr}_{L^2} (B(u_s)^* [-\Delta + W''(u_s)] B(u_s)) \, \rmd s \\ & \qquad  +  \int_0^t \!\left\langle -\Delta u_s + W'(u_s), B(u_s)\, \rmd\alpha_s \right\rangle_{L^2} \quad\forall\,t\in [0,T] \qquad\mc P\mbox{-a.s.} \;.
  \end{split}
\]
Since $\bb P(u \in L^\infty(H^1) \cap L^2(H^2))=1$, $\mc P$-a.s.\ the second term on the left-hand side is $\mc P$-a.s.\ continuous and the trace in the second term on the right-hand side is $\mc P$-a.s.\ bounded uniformly in time. Indeed, arguing as in \eqref{a1}-\eqref{a2bis} we deduce the inequality,
\[ 
\begin{split}
{\rm Tr}_{L^2} (B(u_s)^* [-\Delta + W''(u_s)] B(u_s)) &\leq C_\sigma \|j \|^2_{H^1} \int\! (1+|\nabla u_s|^2+|W''(u_s)|) \, \rmd x \\ & \leq C_\sigma \|j \|^2_{H^1} (1+ \mc F(u_s))\; . 
\end{split}
\]
 Combining these facts with the a.s.\ continuity of the stochastic integral we get the $\mc P$-a.s.\ continuity of the map $t\mapsto \mc F(u_t)$.
\end{proof}

By means of an $H^{-1}$ estimate inspired by \cite{AL}, we next prove pathwise uniqueness of weak solutions.

\begin{proposition}
\label{prop:5.1}
Let $((\Omega,\mc G,\mc G_t,\mc P),(u,\alpha))$ and $((\Omega,\mc G,\mc G_t,\mc P),(v,\alpha'))$ be two weak solutions to \eqref{1} with initial condition $\bar u_0\in H^1$ defined on the same filtered space. If $\alpha'=\alpha$ then $\mc P(u_t=v_t \; \forall\, t\in [0,T]) = 1$. 
\end{proposition}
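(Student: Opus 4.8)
The plan is to prove pathwise uniqueness via an $H^{-1}$ estimate in the spirit of \cite{AL}, exploiting crucially the cancellation coming from the gradient structure of the diffusion term. Set $w_t := u_t - v_t$, so that $w_0 = 0$, and note that since $\alpha = \alpha'$ the stochastic integrals in \eqref{form} for $u$ and for $v$ are driven by the \emph{same} noise; hence $w$ solves an equation with \emph{no} martingale part, namely
\[
\rmd w_t = \big[ \sigma(u_t)(\Delta u_t - W'(u_t)) - \sigma(v_t)(\Delta v_t - W'(v_t)) \big] \, \rmd t + B_t \, \rmd\alpha_t,
\]
where $B_t := B(u_t) - B(v_t)$ has $\|B_t\|_{HS} \le C\,\|\sigma(u_t) - \sigma(v_t)\|_{L^2}\,\|j\|_{H^1} \le C\,\|w_t\|_{L^2}\,\|j\|_{H^1}$ by the Lipschitz bound on $\sigma$. (So the equation for $w$ does carry a martingale part, but one controlled by $\|w_t\|$ itself.) The aim is to apply It\^o's formula to $t \mapsto \|w_t\|_{H^{-1}}^2 = \langle w_t, (\mathrm{Id}-\Delta)^{-1} w_t \rangle_{L^2}$ — or, to avoid the zero mode issue on the torus, to $\|(-\Delta)^{-1/2} \Pi w_t\|_{L^2}^2 + \langle w_t, 1\rangle_{L^2}^2$ after splitting off the conserved mean; since $w$ has no genuinely new independent noise, these quantities can be handled by the same approximation-by-$R_\delta$ device used in Lemmas \ref{lem:4.1} and \ref{lem:5.2}.

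The main term to estimate is the drift pairing
\[
2\big\langle (-\Delta)^{-1}\Pi w_s,\; \sigma(u_s)(\Delta u_s - W'(u_s)) - \sigma(v_s)(\Delta v_s - W'(v_s)) \big\rangle_{L^2}.
\]
Writing $\sigma(u)(\Delta u - W'(u)) - \sigma(v)(\Delta v - W'(v)) = \sigma(u)\big[\Delta w - (W'(u)-W'(v))\big] + (\sigma(u)-\sigma(v))\big[\Delta v - W'(v)\big]$, the first piece contributes the key negative term: $\langle (-\Delta)^{-1}\Pi w, \sigma(u)\Delta w\rangle$. Integrating by parts and using $\sigma(u) \ge 1/C$, the leading part is $-\langle \nabla(-\Delta)^{-1}\Pi w, \sigma(u)\nabla(-\Delta)^{-1}\cdots\rangle$ — more transparently, write $\sigma(u)\Delta w = \Delta(\sigma(u) w) - \text{(lower order in derivatives of }\sigma)$, so the dominant contribution is $-\langle \Pi w, \sigma(u)w\rangle \le -\tfrac1C\|w\|_{L^2}^2$, producing a genuinely dissipative term. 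The remaining contributions — those involving $\nabla\sigma(u)$, the reaction difference $W'(u)-W'(v)$ (controlled by $|u-v|(1+|u|^2+|v|^2)$ thanks to the cubic growth of $W'$), and the factor $\sigma(u)-\sigma(v)$ multiplying $\Delta v - W'(v) \in L^2$ — must all be absorbed either by a small fraction of the dissipative $-\tfrac1C\|w\|_{L^2}^2$ term or into a Gronwall factor. Here one uses $\|(-\Delta)^{-1}\Pi w\|_{H^2} \le C\|w\|_{L^2}$, Sobolev embeddings on $\bb T^d$ for $d\le 3$, and the a.s.\ finiteness of $\int_0^T(\|u_s\|_{H^2}^2 + \|v_s\|_{H^2}^2)\,\rmd s$ together with $\sup_{[0,T]}(\|u_s\|_{H^1} + \|v_s\|_{H^1})$, which is where the regularity $u,v\in L^\infty(H^1)\cap L^2(H^2)$ from the definition of weak solution enters decisively. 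The It\^o correction term $\tfrac12\int_0^t \mathrm{Tr}\big((-\Delta)^{-1}\Pi\, B_s B_s^*\big)\,\rmd s$ is bounded by $C\int_0^t \|w_s\|_{L^2}^2\,\rmd s$ using $\|B_s\|_{HS}^2 \le C\|w_s\|_{L^2}^2$ and the boundedness of $(-\Delta)^{-1}\Pi$ on $L^2$, so it too is absorbable.

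Collecting the estimates, after choosing the absorbing constants appropriately one arrives at a bound of the form
\[
\|w_t\|_{H^{-1}}^2 \le \int_0^t \varphi_s\,\|w_s\|_{H^{-1}}^2\,\rmd s + \text{(martingale)}_t,
\]
where $\varphi_s := C\big(1 + \|u_s\|_{H^2}^2 + \|v_s\|_{H^2}^2\big)$ satisfies $\int_0^T \varphi_s\,\rmd s < \infty$ $\mc P$-a.s.\ — note in particular that the negative $-\tfrac1C\|w_s\|_{L^2}^2$ dissipation is used only to absorb the bad terms and can simply be discarded at the end, but it is what makes that absorption possible. A localization by the stopping times $\tau_\kappa = \inf\{t : \int_0^t(\|u_s\|_{H^2}^2 + \|v_s\|_{H^2}^2)\,\rmd s > \kappa\}$ renders the martingale term a true martingale and $\varphi$ bounded, so a stochastic Gronwall argument (or just taking expectations and applying the deterministic Gronwall lemma on $[0,\tau_\kappa]$) forces $\mc E\,\|w_{t\wedge\tau_\kappa}\|_{H^{-1}}^2 = 0$ for all $t$; letting $\kappa\to\infty$ gives $w_t = 0$ in $H^{-1}$ for all $t$ a.s., hence $u_t = v_t$ in $L^2$ for all $t$ by path continuity in $L^2$. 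The main obstacle is the careful bookkeeping in the second paragraph: extracting the dissipative $-\tfrac1C\|w\|_{L^2}^2$ from $\langle(-\Delta)^{-1}\Pi w, \sigma(u)\Delta w\rangle$ when $\sigma(u)$ is only $H^2$ and not constant, and then checking that \emph{every} error term (especially the ones with $\nabla\sigma(u)$ against $\nabla v$, and $(\sigma(u)-\sigma(v))\Delta v$) can be dominated in $d\le 3$ using only the available $H^1$-boundedness and $L^2(H^2)$-integrability — this is precisely the delicate point, and it is why the argument of \cite{AL} for the deterministic case needs the extra stochastic-calculus scaffolding (approximation by $R_\delta$, stopping times, BDG) worked out above.
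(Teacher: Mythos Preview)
Your outline has the right architecture (an $H^{-1}$ energy, It\^o's formula via $R_\delta$-regularization, stopping times, Gronwall), but it misses the one structural idea that makes the estimate close when the mobility is non-constant: the change of variable $h(u):=\int_0^u \sigma(r)^{-1}\,\rmd r$. The paper does \emph{not} work with $\|u-v\|_{H^{-1}}^2$; it works with
\[
\Psi_t=\tfrac12\big\|h(u_{t\wedge\tau_\kappa})-h(v_{t\wedge\tau_\kappa})\big\|_{H^{-1}}^2 .
\]
The point is that $D_u\Psi = h'(u)\,R_1(h(u)-h(v)) = \sigma(u)^{-1}R_1(h(u)-h(v))$, so when this is paired with the drift $\sigma(u)(\Delta u-W'(u))$ the mobilities cancel \emph{exactly}, and the drift contribution collapses to
\[
\big\langle R_1(h(u)-h(v)),\,\Delta(u-v)-\big(W'(u)-W'(v)\big)\big\rangle_{L^2}.
\]
There is no term of the form $(\sigma(u)-\sigma(v))\Delta v$ and no commutator $[\sigma(u),R_1]$ to estimate. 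Using $R_1\Delta=-\mathrm{Id}+R_1$ one then gets the clean dissipative term $-\langle h(u)-h(v),u-v\rangle_{L^2}$ on the left, and the remaining pieces ($I^1,I^2,I^3$ in the paper) can all be split as $\eps\langle h(u)-h(v),u-v\rangle + C_\kappa\Psi$ after interpolation (in particular the product bound $\|fg\|_{H^{-1}}\le C\|f\|_{H^1}\|g\|_{H^{-1/2}}$ for the trace term $I^3$). The stopping time used is on $\|u_t\|_{H^1}+\|v_t\|_{H^1}$, which is legitimate because of the $C(H^1)$ regularity established in Lemma~\ref{lem:5.2}.

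By contrast, your decomposition $\sigma(u)\Delta w + (\sigma(u)-\sigma(v))\Delta v + \ldots$ produces two kinds of error terms that do \emph{not} close with the available regularity in $d=3$. First, extracting the dissipation from $\langle R_1 w,\sigma(u)\Delta w\rangle$ forces a commutator $[\sigma(u),R_1]$; writing it out yields contributions like $\langle \nabla\sigma(u)\cdot\nabla R_1 w,\Delta R_1 w\rangle$, which after H\"older and interpolation are bounded by $\eps\|w\|_{L^2}^2 + C_\eps\|u\|_{H^2}^4\|w\|_{H^{-1}}^2$, and $\|u\|_{H^2}^4$ is \emph{not} in $L^1([0,T])$ under the hypothesis $u\in L^2(H^2)$. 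Second, the cross term $\langle R_1 w,(\sigma(u)-\sigma(v))\Delta v\rangle$ is no better: any estimate that keeps two factors of $w$ (as it must, to feed Gronwall from zero initial data) requires either $\|w\|_{L^3}$ or $\|\nabla w\|_{L^2}$ on one side, i.e.\ strictly more than the $\|w\|_{L^2}^2$ dissipation can absorb. Your claim that ``every error term can be dominated using only $L^\infty(H^1)\cap L^2(H^2)$'' is exactly the step that fails. Likewise, bounding the It\^o correction simply by $C\|w\|_{L^2}^2$ is too crude: that constant may exceed the dissipation constant, and you need the interpolation trick (splitting as $\eps\|\cdot\|_{L^2}^2 + C_\eps\|\cdot\|_{H^{-1}}^2$) that in the paper hinges on writing $\beta_s=\gamma_s\,(h(u_s)-h(v_s))$ with $\gamma_s\in H^1$.

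In short: the ``cancellation coming from the gradient structure'' that you mention in your first sentence is precisely the $h$-transform of Alt--Luckhaus, and the argument does not go through without it.
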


\begin{proof}
We observe that by $\mc P$-a.s.\ continuity it is enough to show that $\mc P(u_t=v_t)=1$ for any $t\in [0,T]$. For $\kappa > \kappa_0 := 2\|\bar u_0\|_{H^1}$ we introduce the stopping time $\tau_\kappa\colon \Omega\to \bb R_+$ defined by
\[
\tau_\kappa := \inf\{t\in [0,T] \colon \|u_t\|_{H^1}+\|v_t\|_{H^1}>\kappa\}\;,
\]
setting $\tau_\kappa=T$ if the set in the right-hand side is empty. Note that, $\mc P$-a.s.\  the $H^1$-continuity yields  $\tau_\kappa >0$ and $\tau_\kappa \uparrow T$ as $\kappa \to \infty$. Therefore, it is enough to prove that $\mc P(u_{t\wedge \tau_\kappa} =v_{t\wedge \tau_\kappa})=1$ for any $\kappa>\kappa_0$ and $t\in [0,T]$. This will be achieved by showing that the real random variable $\Psi_t\colon\Omega\to \bb R$ defined by
\[
\Psi_t := \frac 12 \big\|h(u_{t\wedge \tau_\kappa}) - h(v_{t\wedge \tau_\kappa})\big\|_{H^{-1}}^2\;, \qquad h(u) := \int_0^u\!\frac{1}{\sigma(r)}\,\rmd r\;,
\]
is $\mc P$-a.s.\ vanishing for any $\kappa>\kappa_0$ and $t\in [0,T]$. To this purpose, we estimate the evolution in time of $\Psi_t$ via It\^o's calculus.

Since the function $\Psi\colon L^2\times L^2 \to \bb R$ defined by $\Psi(u,v):=\frac 12 \big\|h(u) - h(v)\big\|_{H^{-1}}^2$ is not twice differentiable, we proceed by approximation. We introduce the regularized version of $\Psi$ defined by $\Psi^\delta(u,v):=\frac 12 \big\|h(R_\delta u) - h(R_\delta v)\big\|_{H^{-1}}^2$. Since $h \colon \bb R \to \bb R$ is $C^3$, $R_\delta\colon L^2\to H^2$ is bounded, and $H^2$ is compactly embedded in $C(\bb T^d)$, it is easy to show that $\Psi^\delta$ is $C^2$. Setting $f^\delta(u,v) := R_1(h(R_\delta u)-h(R_\delta v))$, the first derivative $(D\Psi^\delta)_{u,v} \in L^2 \times L^2$ is given by 
\[
(D\Psi^\delta)_{u,v} = R_\delta \begin{pmatrix} h'(R_\delta u) f^\delta(u,v) \\ h'(R_\delta v) f^\delta(u,v) \end{pmatrix}\;, 
\]
and the second derivative $(D^2\Psi^\delta)_{u,v}\colon L^2\times L^2 \to L^2\times L^2$ reads,
\[
\begin{split}
(D^2\Psi^\delta)_{u,v} & = R_\delta \begin{pmatrix} h''(R_\delta u) f^\delta(u,v) & 0 \\ 0 & - h''(R_\delta v) f^\delta(u,v) \end{pmatrix} R_\delta \\ & \quad + R_\delta \begin{pmatrix} h'(R_\delta u) R_1 h'(R_\delta u) & - h'(R_\delta u) R_1 h'(R_\delta v) \\ - h'(R_\delta v) R_1 h'(R_\delta u) & h'(R_\delta v) R_1 h'(R_\delta v)\end{pmatrix} R_\delta\;.
\end{split}
\]

As $h$ is Lipschitz we have $ \| (D \Psi^\delta)_{u,v} \|_{L^2 \times L^2} \leq C  (\| u\|_{L^2}+\| v\|_{L^2})$ \, 
hence the first derivative is bounded on bounded subsets of $L^2 \times L^2$.
 As $R_\delta \colon L^2 \hookrightarrow C(\bb T^d)$ is compact, $L^2 \ni u \mapsto h'(R_\delta u) \in C(\bb T^d)$ is uniformly continuous on bounded subset, hence the uniform continuity of $D \Psi^\delta$ on bounded subset follows. Concerning the second derivative, notice that since $h'$ and $h''$ are bounded
 and $\| f^\delta(u,v) \|_{L^\infty} \leq C (\| u\|_{L^2}+\| v\|_{L^2})$ with a constant independent of $\delta$, for each $\phi_1 , \phi_2 \in L^2$ we have
 \begin{equation}
 \label{d2}
 \left|  \left\langle
 \begin{pmatrix} \phi_1 \\ \phi_2 \end{pmatrix} ,
  (D^2\Psi^\delta)_{u,v} \begin{pmatrix} \phi_1 \\ \phi_2 \end{pmatrix}
 \right\rangle_{L^2 \times L^2} \right| \leq C (\| u\|_{L^2}+\| v\|_{L^2}+1) (\| \phi_1 \|_{L^2}^2 + \| \phi_2\|_{L^2}^2) \, ,
 \end{equation}
where the constant $C$ does not depend on $\delta$, $\phi_1$, and $\phi_2$. The same argument used for the first derivative entails that the second derivative $D^2 \Psi^\delta$ is uniformly continuous on bounded subsets of $L^2 \times L^2$.

By It\^o's formula (notice $\Psi^\delta(u_0,v_0)=\Psi^\delta(\bar u_0,\bar u_0)=0$), 
\begin{equation}
\label{itodelta}
\begin{split}
\Psi^\delta(u_t,v_t) & = \int_0^t\! \bigg\langle(D\Psi^\delta)_{u_s,v_s}\,,\begin{pmatrix}\sigma(u_s)(\Delta u_s-W'(u_s)) \\ \sigma(v_s)(\Delta v_s-W'(v_s)) \end{pmatrix}\bigg\rangle_{L^2\times L^2}\,\rmd s \\ & \quad + \int_0^t\! \mathrm{Tr}_{L^2\times L^2} \big((D^2\Psi^\delta)_{u_s,v_s} \bb B(u_s,v_s) \bb B(u_s,v_s)^* \big)\,\rmd s \\ & \quad + \int_0^t\! \big\langle(D\Psi^\delta)_{u_s,v_s}\,, \bb B(u_s,v_s) \,\rmd \alpha_s \big\rangle_{L^2\times L^2}\;,
\end{split}
\end{equation}
where $\bb B$ is the Hilbert-Schmidt operator on $L^2\times L^2$ defined by 
\[
\bb B(u_s,v_s) =\begin{pmatrix}B(u_s) \\ B(v_s) \end{pmatrix}\;.
\]

In order to take the limit as $\delta \to 0$ in the previous identinty, notice that for $u ,v \in C(L^2)$ we have $\Psi^\delta (u_t,v_t) \to  \Psi(u_t,v_t)$ and 
\[
 \lim_{\delta \to 0} (D \Psi^\delta)_{u_t,v_t}  =   \begin{pmatrix} h'( u_t) R_1(h(u_t)-h(v_t)) \\ h'( v_t)  R_1(h(u_t)-h(v_t)) \end{pmatrix}\;, 
 \]
 in $L^2 \times L^2$ uniformly for $t \in [0,T]$, as $h, h'$ are Lipschitz and $R_1 \colon L^2 \hookrightarrow L^\infty$. Since $\mc P$-a.s.\ $u, v \in C(H^1) \cap L^2(H^2)$, this allows to pass to the limit also in the first term on the r.h.s. of \eqref{itodelta} by dominated convergence. Moreover, the same uniform convergence together with the computation of the quadratic variation allows to pass to the limit (up to subsequences) in the stochastic integral. Finally, we rewrite the trace term in \eqref{itodelta} as
\[
\begin{split}
& \mathrm{Tr}_{L^2\times L^2} \big((D^2\Psi^\delta)_{u_s,v_s} \bb B(u_s,v_s) \bb B(u_s,v_s)^* \big) \\ & \quad\qquad = \sum_k \big\langle  \bb B(u_s,v_s) e_k , (D^2 \Psi^\delta)_{u_s,v_s} \bb B(u_s,v_s) e_k \big\rangle_{L^2 \times L^2} \; , 
\end{split}
\]
where $\{ e_k \} \subset L^2$ is an orthonormal basis.  Since $\bb B$ is Hilbert-Schmidt and the bound \eqref{d2} holds, in order to take the limit in the trace term by dominated convergence w.r.t.\ to $k$ it is enough to show that 
\begin{equation}
\label{tracelimit}
\begin{split}
\lim_{\delta \to 0}
(D^2\Psi^\delta)_{u_s,v_s} &=  \begin{pmatrix} h''( u_s) R_1(h(u_s)-h(v_s)) & 0 \\ 0 &  -h''( v_s) R_1(h(u_s)-h(v_s)) \end{pmatrix}  \\ & \quad +  \begin{pmatrix} h'( u_s) R_1 h'( u_s) & - h'( u_s) R_1 h'( v_s) \\ - h'(v_s) R_1 h'( u_s) & h'( v_s) R_1 h'( v_s)\end{pmatrix} \;, 
\end{split}
\end{equation}
in the weak operator topology on $L^2 \times L^2$, uniformly for $s \in [0, T]$, as $h$, $h'$, $ h''$ are Lipschitz and $R_1 \colon L^2 \hookrightarrow L^\infty$.

By stopping at $\tau_\kappa$ and recalling that $h'  \equiv 1/\sigma $, we finally get
\[
\begin{split}
\Psi_t & = \int_0^{t\wedge \tau_\kappa}\! \big\langle R_1 (h(u_s)-h(v_s)), \Delta (u_s - v_s) - W'(u_s) + W'(v_s)\big\rangle_{L^2} \,\rmd s \\ & + \frac 12 \int_0^{t\wedge \tau_\kappa}\! \mathrm{Tr}_{L^2}\Big(\big[R_1 (h(u_s)-h(v_s)) \big]\big[h''(u_s) B(u_s)B(u_s)^* - h''(v_s) B(v_s)B(v_s)^*\big] \\ & \qquad + (B(u_s)^*h'(u_s) - B(v_s)^*h'(v_s)) R_1 (h'(u_s) B(u_s) - h'(v_s) B(v_s)) \Big) \, \rmd s \\ & + \int_0^{t\wedge \tau_\kappa}\! \Big\langle R_1 [h(u_s)-h(v_s)], \Big[\frac{1}{\sigma(u_s)} B(u_s) - \frac{1}{\sigma(v_s)} B(v_s)\Big] \, \rmd \alpha_s \Big\rangle_{L^2}\;.
\end{split}
\]
Let $\{ e_k \} \subset L^2$ be the Fourier orthonormal basis and define

\[
f_s = R_1 (h(u_s)-h(v_s))\;, \qquad \beta_s  := \frac1{\sqrt{\sigma(u_s)}} - \frac1{\sqrt{\sigma(v_s)}}\;.
\]
By using that $R_1\Delta = -\mathrm{Id} + R_1$, and recalling the definitions of $h(u)$ and $B(u)$, after some simple algebraic computations evaluating the trace by Fourier series the above identity reads,
\be
\label{dt}
\Psi_t + \int_0^{t\wedge \tau_\kappa}\! \big\langle  h(u_s)-h(v_s), u_s - v_s\big\rangle_{L^2} \,\rmd s = I^1_t+I^2_t+I^3_t+  M_{t\wedge\tau_\kappa}\;,
\ee
where
\[
\begin{split}
I^1_t & := \int_0^{t\wedge \tau_\kappa}\! \big\langle f_s, u_s - v_s - W'(u_s) + W'(v_s)\big\rangle_{L^2} \,\rmd s \;, \\ I^2_t & := \| j \|_{L^2}^2\int_0^{t\wedge \tau_\kappa}\! \int\! f_s(x)  \left(\frac{\sigma'(v_s(x))}{\sigma(v_s(x))} - \frac{\sigma'(u_s(x))}{\sigma(u_s(x))}\right) \, \rmd x\;, \\ I^3_t & := \int_0^{t\wedge \tau_\kappa}\! \sum_{k\in\bb Z^d} \big\langle \beta_s j * e_k , R_1 \beta_s j * e_k \big\rangle_{L^2}\;,
\end{split}
\]
and $M_{t\wedge\tau_\kappa}$ is the stochastic integral (the last term in the previous It\^o's formula).  

By Assumption \ref{t:ws}, H\"older inequality, and the Sobolev embedding $H^1 \hookrightarrow L^6$ we have,
\[
\begin{split}
|I^1_t| & \le \int_0^{t\wedge \tau_\kappa}\! \big\langle |f_s|, |u_s - v_s|(1+ u_s^2 + v_s^2) \big\rangle_{L^2} \,\rmd s \\ & \le \int_0^{t\wedge \tau_\kappa}\! \|f_s\|_{L^6}  \|u_s - v_s\|_{L^2} \|1+ u_s^2 + v_s^2\|_{L^3} \,\rmd s \\ & \le C \int_0^{t\wedge \tau_\kappa}\! \|f_s\|_{H^1}  \|u_s - v_s\|_{L^2} \big(1+ \|u_s\|_{H^1}^2 + \|v_s\|_{H^1}^2\big) \,\rmd s   \\ & \le C (1+\kappa^2)  \int_0^{t\wedge \tau_\kappa}\! \sqrt{\Psi_s}\, \|u_s - v_s\|_{L^2} \,\rmd s \;,
\end{split}
\]
where in the last inequality we used that $\|f_s\|_{H^1}=\sqrt{2\Psi_s}$ and $\|u_s\|_{H^1}^2 + \|v_s\|_{H^1}^2\le 2\kappa^2$ for any $s\le \tau_\kappa$. 

To estimate $I^2_t$ we observe that by Assumption \ref{t:ws} there is $C>0$ for which $\big|\sigma(a)^{-1} \sigma'(a) - \sigma(b)^{-1} \sigma'(b)\big|  \le C |a-b|$, so that, from Cauchy-Schwartz inequality and arguing as before,
\[
\begin{split}
|I^2_t|  & \le C \| j \|_{L^2}^2 \int_0^{t\wedge \tau_\kappa}\!  \int\! f_s(x) |u_s(x)-v_s(x)| \, \rmd x\,\rmd s \le C  \int_0^{t\wedge \tau_\kappa}\!  \|f_s\|_{H^1} \|u_s-v_s\|_{L^2} \\ & \le C \int_0^{t\wedge \tau_\kappa}\! \sqrt{\Psi_s}\, \|u_s - v_s\|_{L^2} \,\rmd s \;.
\end{split}
\]

In view of the definition of $h(u)$ and Assumption \ref{t:ws}, it is easy to show that, for a suitable $C>0$,  
\be
\label{abh}
\max\{(a-b)^2;(h(a)-h(b))^2\} \le C (h(a)-h(b))(a-b)\;.
\ee
Therefore, by the previous estimates and Young inequality, 
\be
\label{r12}
|I^1_t| + |I^2_t| \le \frac 12 \int_0^{t\wedge \tau_\kappa}\! \big\langle  h(u_s)-h(v_s), u_s - v_s\big\rangle_{L^2}\,\rmd s  + C(1+\kappa^2) \int_0^{t\wedge \tau_\kappa}\! \Psi_s \,\rmd s \;.
\ee

To estimate $I^3_t$, we first notice that there is $C>0$ such that, for any $f\in H^1$ and $g\in H^{-1/2}$, 
\be
\label{fg}
\|f g\|_{H^{-1}} \le C \|f\|_{H^1} \|g\|_{H^{-1/2}}\;.
\ee
Indeed, by Fourier expansion and Parseval identity,
\[
\begin{split}
\|fg\|_{H^{-1}}^2 & = \sum_{k\in\bb Z^d} \frac{\big|\hat{fg}(k)\big|^2}{1+|k|^2}  = \sum_{k\in\bb Z^d} \frac1{1+|k|^2} \bigg|\frac1{(2\pi)^{d/2}}\sum_{k'\in \bb Z^d} \hat f(k-k') \hat g (k') \bigg|^2 \\ & \le \sum_{k\in\bb Z^d} \frac1{1+|k|^2} \bigg(\sum_{k'\in \bb Z^d} |\hat f(k-k')| |\hat g (h)| \bigg)^2 \\ & \le \sum_{k\in\bb Z^d} \frac1{1+|k|^2} \sum_{k'\in \bb Z^d} |\hat f(k-k')|^2  (1+|k-k'|^2)\sum_{k'\in \bb Z^d} \frac{|\hat g(k')|^2}{1+|k-k'|^2} \\ & = \|f\|_{H^1}^2 \sum_{k'\in \bb Z^d} |\hat g(k')|^2 \sum_{k\in\bb Z^d} \frac1{(1+|k|^2)(1+|k-k'|^2)} \le C \|f\|_{H^1}^2 \|g\|_{H^{-1/2}}^2 \;,
\end{split}
\]
where we used that there is $C>0$ such that, for $d=1,2,3$,
\[
\begin{split}
& \sum_{k\in\bb Z^d} \frac1{(1+|k|^2)(1+|k-k'|^2)} \le \frac C{\sqrt{1+|k'|^2}} \;.
\end{split}
\]
By \eqref{fg} and standard interpolation,
\be
\label{r31}
\begin{split}
& \sum_{k\in\bb Z^d} \big\langle \beta_s j * e_k , R_1 \beta_s j * e_k \big\rangle_{L^2} = \sum_{k\in\bb Z^d} \|\beta_s j* e_k\|_{H^{-1}}^2 \\ & \le C\sum_{k\in\bb Z^d} \|j*e_k\|_{H^1}^2 \|\beta_s\|_{H^{-1/2}}^2 \le C \| j\|_{H^1}^2 \|\beta_s\|_{H^{-1/2}}^2 \le C \|\beta_s\|_{L^2} \|\beta_s\|_{H^{-1}}\;.
\end{split}
\ee

By the definition of $h(\cdot)$ and Assumption \ref{t:ws} it is straightforward to verify that $|\beta_s|\le C|h(u_s)-h(v_s)|$. Moreover, we claim that $\gamma_s := (h(u_s)-h(v_s))^{-1}\beta_s \in L^\infty\cap H^1$ and that, for a suitable $C>0$, $\|\gamma_s\|_{H^1} \le C( 1+\|u_s\|_{H^1}+\|v_s\|_{H^1})$. To see this, notice that the function $\tilde\sigma(r) := \sigma(h^{-1}(r))^{-1/2}$ is $C^2$ with bounded derivatives, and satisfies
\[
\gamma_s = \int_0^1\! \tilde\sigma'(h(v_s(x))+\lambda(h(u_s(x))-h(v_s(x))))\,\rmd \lambda\;,
\]
from which the claim follows. 

By using \eqref{fg}, for any $s\le \tau_\kappa$, 
\[
\big\|\beta_s\big\|_{H^{-1}} \le C \big\|\gamma_s\big\|_{H^1} \big\|h(u_s)-h(v_s)\big\|_{H^{-1/2}} \le C (1+ 2\kappa) \big\|h(u_s)-h(v_s)\big\|_{H^{-1/2}}\;,
\] 
hence, by \eqref{r31}, interpolation and Young inequality, for any $\eps>0$,
\[
\begin{split}
|I^3_t| & \le C (1+ 2\kappa)  \int_0^{t\wedge \tau_\kappa}\! \big\|h(u_s)-h(v_s)\big\|_{L^2} \big\|h(u_s)-h(v_s)\big\|_{H^{-1/2}}\,\rmd s \\ & \le C (1+ 2\kappa)  \int_0^{t\wedge \tau_\kappa}\! \big\|h(u_s)-h(v_s)\big\|_{L^2}^{3/2} \big\|h(u_s)-h(v_s)\big\|_{H^{-1}}^{1/2}\,\rmd s \\ & \le \eps \int_0^{t\wedge \tau_\kappa}\! \big\|h(u_s)-h(v_s)\big\|_{L^2}^2\,\rmd s + C_\eps  (1+\kappa^2)\int_0^{t\wedge \tau_\kappa}\!  \big\|h(u_s)-h(v_s)\big\|_{H^{-1}}^2 \,\rmd s
\;.
\end{split}
\]
Since by \eqref{abh} $\big\|h(u_s)-h(v_s)\big\|_{L^2}^2 \le \big\langle  h(u_s)-h(v_s), u_s - v_s\big\rangle_{L^2}$, by choosing $\eps$ small enough we conclude that there is $C>0$ such that,
\be
\label{r3}
|I^3_t|  \le \frac 12 \int_0^{t\wedge \tau_\kappa}\! \big\langle  h(u_s)-h(v_s), u_s - v_s\big\rangle_{L^2}\,\rmd s  + C (1+\kappa^2) \int_0^{t\wedge \tau_\kappa}\! \Psi_s \,\rmd s \;.
\ee

By \eqref{dt}, \eqref{r12}, and \eqref{r3} we get,
\[
\Psi_t \le C \int_0^{t\wedge \tau_\kappa}\! \Psi_s \,\rmd s + M_{t\wedge\tau_\kappa} \le C \int_0^t\! \Psi_s \,\rmd s + M_{t\wedge\tau_\kappa} \;.
\]
By the optional stopping theorem $\mc E (M_{t\wedge\tau_\kappa}) = \mc E(M_0) =0$; therefore, by taking the expectation in both sides and applying Gronwall's inequality we conclude that $\mc E(\Psi_t) = 0$, and therefore $\mc P$-a.s.\ $\Psi_t=0$.
\end{proof}

By the Yamada-Watanabe argument \cite{YW} (see also \cite{RY,RSZ}) we next deduce uniqueness of the law of weak solutions.

\begin{proposition}
\label{prop:5.2}
Given $\bar u_0\in H^1$, the following holds.
\begin{itemize}
\item[a)] The law $\bb Q = \mc P \circ (u,\alpha)^{-1}$ on $C(L^2) \times C(H^{-\bar{s}})$ is the same for any weak solution $((\Omega,\mc G,\mc G_t,\mc P),(u,\alpha))$ to \eqref{1} with initial datum $\bar u_0$.
\item[b)] There exists a Borel map $\Theta \colon C(H^{-\bar{s}}) \to C(L^2)$, $\mc B_t(C(H^{-\bar{s}}))/\mc B_t$ measurable, and such that for any weak solution $((\Omega,\mc G,\mc G_t,\mc P),(u,\alpha))$ we have $u=\Theta\circ\alpha$ $\mc P$-a.s..
\end{itemize}
\end{proposition}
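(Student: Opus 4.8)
The plan is to run the classical Yamada--Watanabe scheme \cite{YW} (in the form presented in \cite{RY,RSZ}), combining pathwise uniqueness of weak solutions (Proposition~\ref{prop:5.1}) with the existence of at least one weak solution, which is guaranteed by Theorem~\ref{th:4.1} together with Lemma~\ref{lem:5.1}. First I would fix a weak solution $((\Omega,\mc G,\mc G_t,\mc P),(u,\alpha))$ and consider the law $\bb Q:=\mc P\circ(u,\alpha)^{-1}$ on the Polish space $C(L^2)\times C(H^{-\bar{s}})$. Since $\alpha$ is by definition a cylindrical Wiener process, the second marginal of $\bb Q$ is always the law $\bb W$ of the cylindrical Wiener process on $C(H^{-\bar{s}})$, regardless of the chosen weak solution. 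Disintegrating $\bb Q$ over $\bb W$ produces a family of regular conditional probabilities $\{\bb Q_\a\}_{\a\in C(H^{-\bar{s}})}$ on $C(L^2)$ with $\bb Q(\rmd u,\rmd\a)=\bb Q_\a(\rmd u)\,\bb W(\rmd\a)$. A standard consequence of the weak-solution property (see \cite{RY,RSZ}) is that the kernel $\a\mapsto\bb Q_\a$ is \emph{adapted}, i.e.\ $\a\mapsto\bb Q_\a(A)$ is measurable with respect to the $\bb W$-completion of $\mc B_t(C(H^{-\bar{s}}))$ for every $A\in\mc B_t$; this is where one uses that the identity \eqref{form} involves $\a$ only up to time $t$ and that $u$ is $\mc G_t$-adapted.

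Given two weak solutions with laws $\bb Q^1,\bb Q^2$, I would then build a single filtered probability space carrying both solutions driven by \emph{the same} noise. On $\bar\Omega:=C(L^2)\times C(L^2)\times C(H^{-\bar{s}})$ set $\bar{\bb P}(\rmd u^1,\rmd u^2,\rmd\a):=\bb Q^1_\a(\rmd u^1)\,\bb Q^2_\a(\rmd u^2)\,\bb W(\rmd\a)$, and equip $\bar\Omega$ with the $\bar{\bb P}$-completion of the filtration generated by $(u^1,u^2,\a)$. Using adaptedness of the kernels, one checks that $\a$ is still a cylindrical Wiener process with respect to this filtration and that each triple $((\bar\Omega,\ldots),(u^i,\a))$ is again a weak solution to \eqref{1} with initial datum $\bar u_0$; the verification of \eqref{form} on the new space reduces, via the martingale characterisation of the It\^o integral \cite{DaZ}, to the corresponding identity under $\bb Q^i$, which holds by hypothesis. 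Applying Proposition~\ref{prop:5.1} on $(\bar\Omega,\ldots)$ yields $u^1_t=u^2_t$ for all $t\in[0,T]$, $\bar{\bb P}$-a.s. Hence for $\bb W$-a.e.\ $\a$ the product $\bb Q^1_\a\otimes\bb Q^2_\a$ is carried by the diagonal of $C(L^2)\times C(L^2)$, which forces $\bb Q^1_\a=\bb Q^2_\a$ and, moreover, that this common measure is a Dirac mass $\delta_{\Theta(\a)}$ for some $\Theta(\a)\in C(L^2)$. In particular $\bb Q^1=\bb Q^2$, proving part~(a), and the construction produces the map $\Theta$.

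It remains to record the measurability of $\Theta$ and to deduce (b). The assignment $\a\mapsto\delta_{\Theta(\a)}$ is a measurable map into the space of Borel probability measures on $C(L^2)$, and extracting the atom of a Dirac mass is a Borel operation, so $\Theta$ is Borel; its $\mc B_t(C(H^{-\bar{s}}))/\mc B_t$-measurability (after $\bb W$-completion) is inherited from the adaptedness of the kernel $\a\mapsto\bb Q_\a$ established above. Finally, for any weak solution $((\Omega,\mc G,\mc G_t,\mc P),(u,\a))$, the conditional law of $u$ given $\a$ equals $\bb Q_\a=\delta_{\Theta(\a)}$, whence $u=\Theta\circ\a$ holds $\mc P$-a.s., which is part~(b). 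Combined with Lemma~\ref{lem:5.1} and the remark preceding it, this also yields the existence of a strong solution whose law is the martingale solution $\bb P$, as well as uniqueness of strong solutions, completing the proof of Theorem~\ref{t:eu}.

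The step I expect to be the main obstacle is showing that the glued processes $(u^i,\a)$ are genuine weak solutions on $(\bar\Omega,\bar{\bb P})$ with the enlarged filtration, i.e.\ that both the Wiener property of $\a$ and the stochastic-integral equation \eqref{form} survive the change of probability measure. This rests on the adaptedness of the disintegration kernels and is the technical heart of the Yamada--Watanabe argument; once it is in place, everything else is soft measure theory and an application of Proposition~\ref{prop:5.1}.
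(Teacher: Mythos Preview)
Your proposal is correct and follows essentially the same Yamada--Watanabe scheme as the paper: disintegrate the joint laws over the Wiener measure, use the adaptedness of the conditional kernels (the paper isolates this as Lemma~\ref{lem:5.3}), couple two solutions on the product space $C(L^2)\times C(L^2)\times C(H^{-\bar s})$ via the product of the kernels, verify that both coordinates remain weak solutions with respect to the enlarged filtration, and then invoke Proposition~\ref{prop:5.1}. The only organisational difference is that the paper treats (a) and (b) in two passes---first deducing $\bb Q^1=\bb Q^2$, then rerunning the construction with a single solution to extract the Dirac structure---whereas you extract the Dirac mass $\delta_{\Theta(\alpha)}$ directly from the diagonal concentration of $\bb Q^1_\alpha\otimes\bb Q^2_\alpha$; both routes are equivalent.
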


\begin{proof}
The proof can be easily achieved by adapting the argument in \cite{RY} for finite dimensional diffusions. However, for the reader's convenience, we present the complete strategy. 

\medskip
\noindent
a) Fix $\bar{u}_0 \in H^1$ and let $((\Omega^i,\mc G^i,\mc G^i_t,\mc P^i),(u^i,\alpha^i))$, $i=1,2$, be two weak solutions to \eqref{1} with initial condition $\bar u_0$. To take advantage of the pathwise uniqueness proved in Proposition \ref{prop:5.1}, we need to bring them on a same filtered probability space. 

Denote by $\bb P^*$ the law of the cylindrical Wiener process on $C(H^{-\bar{s}})$. Set also $\bb Q^i= \mc P^i \circ (u^i,\alpha^i)^{-1} $ be the probabilities on $C(L^2)\times C(H^{-\bar{s}})$ induced by the pair $(u^i,\alpha^i)$. As the spaces involved are Polish, these probabilities can be disintegrated w.r.t.\ $\bb P^*$ so that
\[
\bb Q^i ( \rmd v, \rmd v')=   \bb Q^i_{v'} (\rmd v) \, \bb P^*(\rmd v') \;, \quad i=1,2\;,
\]
where $\bb P^*$-a.s.\ $ \bb Q^i_{v'}$ is a probability on $C(L^2)$. Moreover, $v' \mapsto \bb Q^i_{v'}(A)$ is a Borel map for any $A \in \mc B(C(L^2))$. In the sequel, we need the following result, which is a straightforward adaptation to the present context of \cite[Chap.\ 4, Lemma (1.6)]{RY}.

\begin{lemma}
\label{lem:5.3}
If $A\in \mc B_t(C(L^2))$, $t\in [0,T]$, the map $w^3 \mapsto \bb Q^i_{w^3}(A)$ is $\mc B_t(C(H^{-\bar{s}}))$-measurable up to a negligible set.
\end{lemma}

Consider now the product space $\bs W := C(L^2) \times C(L^2) \times  C(H^{-\bar{s}}) $, whose elements are denoted by $w = (w^1,w^2,w^3)$. On $\bs W$ we define the probability measure,
\[
\Pi(\rmd w^1,\rmd w^2,\rmd w^3) := \bb Q^1_{w^3} (\rmd w^1) \bb Q^2_{w^3} (\rmd w^2) \bb P^*(\rmd w^3) \;,
\]
and we endow $\bs W$ with the filtration $\mc G_t$ defined as the completion with respect to $\Pi$ of the canonical filtration $\mc B_t(\bs W)$.

We now claim that, for $i=1,2$, $((\bs W,\mc G,\mc G_t,\Pi),(w^i,w^3))$ are weak solutions to \eqref{1} with initial condition $\bar u_0$ on the same filtered space, and such that $\bb P^i=\mc P^i \circ (u^i)^{-1}$ is the law of $w^i$. The latter assertion, which is immediate by construction, clearly implies condition ii) in the definition of weak solution, hence it remains to verify conditions i) and iii).

To prove that $w^3$ is a $L^2$-cylindrical Wiener process with respect to $\mc G_t$, we only need to check that for any $0\le s<t\in [0,T]$ the process $w^3_t-w^3_s$ is independent of $\mc G_s$. This property follows by noticing that, letting $A_1,A_2\in \mc B_s(C(L^2))$, $B\in\mc B_s(C(H^{-\bar{s}}))$, by Lemma \ref{lem:5.3}, for any $\psi\in C^\infty(\bb T^d)$,
\[
\begin{split}
& \bb E^{\Pi}\big[ \exp\big(\rmi \langle \psi, w^3_t - w^3_s\rangle \big) \id_{w^1\in A_1}\id_{w^2\in A_2} \id_{w^3\in B}\big] \\ & \qquad = \int_B\! \exp\big(\rmi \langle \psi, w^3_t - w^3_s\rangle \big) \bb Q^1_{w^3} (A_1) \bb Q^2_{w^3} (A_2) \bb P^*(\rmd w^3) \\ & \qquad = \int_B\! \bigg[\int\!\exp\big(\rmi \langle \psi, \bar w_t - \bar w_s\rangle \big)\,\bb P^*_{t,w^3}(\rmd \bar w)\bigg] \bb Q^1_{w^3} (A_1) \bb Q^2_{w^3} (A_2) \bb P^*(\rmd w^3) \\ & \qquad = \exp\big(- (t-s) \|\psi\|_{L^2}^2 \big) \int_B\! \bb Q^1_{w^3} (A_1) \bb Q^2_{w^3} (A_2) \bb P^*(\rmd w^3) \\ & \qquad = \exp\big(- (t-s) \|\psi\|_{L^2}^2 \big)\,  \Pi(A_1\times A_2\times B)\;,
\end{split}
\]
where $\bb P^*_{t,w^3}$ is a regular version of the conditional probability $\bb P^*(\,\cdot\,|w^3_s,s\in [0,t])$. 

Let $\psi\in C^\infty([0,T]\times \bb T^d)$ and $t\in [0,T]$. Since $\mc P^i$-a.s.,
\[
\begin{split}
\langle u^i_t, \psi_t \rangle_{L^2} & = \langle \bar u_0, \psi_0\rangle_{L^2}  + \int_0^t\! \langle u^i_s, \partial_s \psi_s\rangle_{L^2}\,\rmd s \\ & \quad + \int_0^t\! \langle \sigma(u^i_s) (\Delta u^i_s - W'(u^i_s)), \psi_s\rangle_{L^2} \, \rmd s + \int_0^t\! \langle \psi_s, B(u^i_s)\,\rmd\alpha^i_s \rangle_{L^2}\;,
\end{split}
\]
then, as follows from, e.g., \cite[Chap.\ 4, Ex.\ (5.16)]{RY}, $\Pi\,$-a.s.,
\[
\begin{split}
\langle w^i_t, \psi_t \rangle_{L^2} & = \langle \bar u_0, \psi_0\rangle_{L^2}  + \int_0^t\! \langle w^i_s, \partial_s \psi_s\rangle_{L^2} \,\rmd s\\ & \quad + \int_0^t\! \langle \sigma(w^i_s) (\Delta w^i_s - W'(w^i_s)), \psi_s\rangle_{L^2} \, \rmd s + \int_0^t\! \langle \psi_s, B(w^i_s)\, \rmd w^3_s \rangle_{L^2}\;,
\end{split}
\]
which is property iii) in the definition of weak solutions.

By the pathwise uniqueness in Proposition \ref{prop:5.1}, $\Pi((w^1_t,w^3_t) = (w^2_t,w^3_t) \; \forall\, t\in [0,T]) = 1$, which implies $\bb Q^1=\bb Q^2$, i.e., the uniqueness of the law of weak solutions.  

\medskip
\noindent
b) Let $((\Omega,\mc G,\mc G_t,\mc P),(u,\alpha))$ be a weak solution of \eqref{1} with initial condition $\bar u_0$, whose existence is ensured by Lemma \ref{lem:5.1}. We apply the previous construction with $((\Omega^i,\mc G^i,\mc G^i_t,\mc P^i),(u^i,\alpha^i)) = ((\Omega,\mc G,\mc G_t,\mc P),(u,\alpha))$ for $i=1,2$. Thus, for 
\[
\mc P \circ (u,\alpha)^{-1} = \bb Q\;, \qquad \bb Q(\rmd w^i,\rmd w^3) = \bb Q_{w^3} (\rmd w^i) \, \bb P^*(\rmd w^3)\;, \quad i = 1,2\;, 
\]
\[
\Pi(\rmd w^1,\rmd w^2,\rmd w^3) = \bb Q_{w^3} (\rmd w^1) \bb Q_{w^3} (\rmd w^2) \bb P^*(\rmd w^3) \;,
\] 
pathwise uniqueness yields $\Pi(w^1_t = w^2_t \; \forall\, t\in [0,T]) = 1$. As a consequence, the processes $w^1$ and $w^2$ are simultaneously equal and independent under the measure $\bb Q_{w^3} (\rmd w^1) \bb Q_{w^3} (\rmd w^2)$ for $\bb P^*$-a.s\ $w^3$. This is possible only if there exists a Borel map $\Theta\colon C(H^{-\bar{s}}) \to C(L^2)$ such that $\bb Q_{w^3} = \delta_{\Theta(w^3)}$ for $\bb P^*$-a.s\ $w^3$. Furthermore, in view of Lemma \ref{lem:5.3}, $\Theta$ is $\mc B_t(C(H^{-\bar{s}}))/\mc B_t(C(L^2))$ measurable. Therefore, $\bb Q(\rmd w^1,\rmd w^3) = \delta_{\Theta(w^3)} (\rmd w^1) \,\bb P^*(\rmd w^3)$, whence $u=\Theta\circ\alpha$ $\mc P$-a.s.. 
\end{proof}

\begin{proof}[Proof of Theorem \ref{t:eu}] 
By Theorem \ref{th:4.1}, for each initial datum $\bar u_0\in H^1$ there exists a martingale solution $\bb P$ to \eqref{1} satisfying \eqref{a4terdue}, which implies \eqref{steu} in view of the growth assumptions of the potential $W$. Moreover, $\bb P(u\in C(H^1)) =1$ in view of Lemma \ref{lem:5.2}. By Lemma \ref{lem:5.1} and item a) of Proposition \ref{prop:5.2}, the uniqueness of the martingale solution $\bb P$ follows. 

The pathwise uniqueness proved in Proposition \ref{prop:5.1} clearly implies the uniqueness of strong solutions. Therefore, we are left with the proof of existence of strong solutions.

Given $\bar u_0\in H^1$ and a probability space $(\Omega,\mc G,\mc P)$ equipped with a cylindrical Wiener process $\alpha$, we claim that the process $u=\Theta\circ \alpha$, with $\Theta$ as given in item b) of Proposition \ref{prop:5.2}, is a strong solution to \eqref{1} with initial datum $\bar u_0$. To show this, we observe that the law of $(u,\alpha)$ is equal to the law $\bb Q$ of weak solutions, uniquely determined according to Proposition \ref{prop:5.2}. In addition, as seen in the proof of that proposition, denoting by $(w^1,w^2)$ the elements of $C(L^2)\times C(H^{-\bar{s}})$ and by $\tilde {\mc G}$ [resp.\ $\tilde{\mc G}_t$] the $\sigma$-algebra $\mc B(C(L^2)\times C(H^{-\bar{s}}))$  [resp.\ filtration $\mc B_t(C(L^2)\times C(H^{-\bar{s}}))$] completed under $\bb Q$, the pair $((C(L^2)\times C(H^{-\bar{s}}),\tilde{\mc G},\tilde{\mc G}_t,\bb Q), (w^1,w^2))$ is a weak solution to \eqref{1} with initial datum $\bar u_0$. Therefore, setting $\mc G_t = (u,\alpha)^{-1}(\tilde{\mc G}_t)$, the same reasoning as in the proof of Proposition \ref{prop:5.2}, based on \cite[Chap.\ 4, Ex.\ (5.16)]{RY}, implies that the pair $((\Omega,\mc G_t,\mc G,\mc P),(u,\alpha))$ is a weak solution to \eqref{1} with initial datum $\bar u_0$. Since $\Theta$ is $\mc B_t(C(H^{-\bar{s}}))/\mc B_t(C(L^2))$ measurable, the process $u$ is $\mc G^\alpha_t$-adapted, hence $u$ is a strong solution. 
\end{proof} 
 
\appendix

\section{A class of $C_0$-semigroups}
\label{app:a}

In this appendix we prove the following lemma, concerning the generation of $C_0$-semigroups on the Sobolev space $H^1$. Here we set $\mc H:=H^1$ and denote the norm and inner product in $\mc H$ simply by $\|\cdot\|$ and $\langle \cdot,\cdot\rangle$. 

\begin{lemma}
\label{semigroups}
Let $\mc H=H^1$, $d=2,3$, $v \in H^2$ and $A\colon H^3\subset \mc H \to \mc H$ defined by $Au = \sigma(v) \Delta u$.  Then the following holds.
\begin{enumerate}
\item $A$ is closed, densely defined, and it generates a $C_0$-semigroup $S(t)$, $t\ge 0$, on $\mc H$ satisfying $\| S(t) \| \le e^{m_0 t}$ for any $t\ge 0$ for some $m_0>0$ (depending only on $\sigma$).
\item Given $\delta>0$ let $R_\delta := (\mathrm{Id} -\delta \Delta)^{-1}$ and $A_\delta \colon \mc H \to \mc H$ be defined by $A_\delta u := \sigma(R_\delta v) R_\delta \Delta R_\delta u$. Then $A_\delta$ is a bounded (indeed compact) operator on $\mc H$ and generates a uniformly continuous (semi)group of linear operators $S_\delta(t)$, $t\ge 0$. Moreover, $\| S_\delta(t) \| \le e^{m_0 t}$ for any $\delta>0$ small enough (depending only on $\sigma(v)$) and any $t\ge 0$.   
\item Consider the linear operator $\lim_{\delta \to 0} A_\delta$ defined on $\{ u \in \mc H \colon \exists \lim_{\delta \to 0} A_\delta u \}$ as $(\lim_{\delta \to 0} A_\delta) u:= \lim_{\delta \to 0} A_\delta u$.  Then $\lim_{\delta \to 0} A_\delta =A$ as unbounded operators and $S_\delta(t) u \to S(t)u$ in $\mc H$ for every $u \in \mc H$ and every $t\ge 0$.
\end{enumerate} 
\end{lemma}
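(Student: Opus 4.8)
The plan is to verify the three items by combining the Lumer--Phillips theorem with an explicit choice of equivalent inner product on $\mc H = H^1$ adapted to the weight $\sigma(v)$. For item (1), the idea is that $A = \sigma(v)\Delta$ is, up to a zeroth-order perturbation, self-adjoint and dissipative with respect to the weighted inner product $\langle u,w\rangle_\sigma := \int \sigma(v)^{-1}(uw + \nabla u\cdot\nabla w)\,\rmd x$, which is equivalent to the standard $H^1$ inner product since $1/C \le \sigma \le C$. First I would check that $A$ with domain $H^3$ is densely defined and closed (closedness follows since $\sigma(v)$ is bounded below and elliptic regularity identifies the graph norm with the $H^3$ norm). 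Then, integrating by parts, for $u \in H^3$ one computes $\langle Au, u\rangle_\sigma = \int (\Delta u)(u - \Delta u)\,\rmd x + (\text{terms with }\nabla(\sigma(v)^{-1}))$; the leading part is $-\|\nabla u\|_{L^2}^2 - \|\Delta u\|_{L^2}^2 + \int|\nabla u|^2 \le 0$ up to a bounded correction, and the remainder is controlled by $\|\sigma'\|_\infty$, $\|\nabla v\|$, and Sobolev embedding in $d\le 3$ (this is exactly where $d=2,3$ and $v\in H^2$ enter, to make $\nabla(\sigma(v)^{-1}) = -\sigma'(v)\sigma(v)^{-2}\nabla v$ a bounded multiplier on the relevant spaces). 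Hence $A - m_0$ is dissipative for a suitable $m_0 = m_0(\sigma)$. For the range condition, one solves $(m_0 + \lambda - A)u = g$ for large $\lambda$, i.e. $(\lambda' - \sigma(v)\Delta)u = g$, which is a standard elliptic problem solvable in $H^3$ by the Lax--Milgram theorem in the weighted space. Lumer--Phillips then yields the $C_0$-semigroup $S(t)$ with $\|S(t)\| \le e^{m_0 t}$.

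For item (2), since $R_\delta\colon \mc H \to H^3$ is bounded and $\Delta\colon H^3 \to H^1$ is bounded, $A_\delta = \sigma(R_\delta v)R_\delta\Delta R_\delta$ is a bounded operator on $\mc H$; compactness follows because $R_\delta$ gains two derivatives and $H^3 \hookrightarrow H^1$ is compact on the torus. A bounded operator always generates a uniformly continuous group $S_\delta(t) = e^{tA_\delta}$. The uniform bound $\|S_\delta(t)\| \le e^{m_0 t}$ is again obtained via Lumer--Phillips: I would show $A_\delta - m_0$ is dissipative in the same weighted inner product (now with weight $\sigma(R_\delta v)^{-1}$), using that $R_\delta$ is a self-adjoint contraction on $L^2$ commuting with $\Delta$, so $\langle A_\delta u, u\rangle_\sigma$ produces terms $-\|\nabla R_\delta u\|^2 - \|\Delta R_\delta u\|^2 + \ldots$ of the same structure as before; the corrections involve $\nabla(\sigma(R_\delta v)^{-1})$, and since $R_\delta v \to v$ in $H^2$ one has uniform bounds on $\|\nabla(\sigma(R_\delta v)^{-1})\|$ for $\delta$ small, giving the same $m_0$. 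The main technical care here is that the weighted norm $\langle\cdot,\cdot\rangle_{\sigma(R_\delta v)}$ is only uniformly equivalent to $\|\cdot\|_{H^1}$ for $\delta$ small, so the dissipativity estimate transfers to the original norm with an absorbed constant; this forces the restriction ``$\delta$ small depending on $\sigma(v)$.''

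For item (3), I would first identify $\lim_{\delta\to 0} A_\delta$ with $A$ as unbounded operators. If $u\in H^3$, then $R_\delta u \to u$ in $H^3$ and $R_\delta\Delta R_\delta u \to \Delta u$ in $H^1$, while $\sigma(R_\delta v)\to \sigma(v)$ uniformly (by $R_\delta v\to v$ in $H^2\hookrightarrow C(\bb T^d)$), so $A_\delta u \to Au$; conversely, if $A_\delta u$ converges in $\mc H$ then testing against smooth functions shows the limit must be the distribution $\sigma(v)\Delta u$, and a boundedness/elliptic-regularity argument (the kind already used at the end of the proof of Proposition \ref{daprato}) forces $u\in H^3$, so the two operators have the same domain and agree. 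The convergence $S_\delta(t)u \to S(t)u$ for all $u\in\mc H$, $t\ge 0$, then follows from the Trotter--Kato approximation theorem: I have the uniform stability bound $\|S_\delta(t)\|\le e^{m_0 t}$ from item (2), and I need resolvent convergence $(\lambda - A_\delta)^{-1}g \to (\lambda - A)^{-1}g$ for $g$ in a dense set and some $\lambda$ in the common resolvent half-plane. This last point is the heart of the matter and the step I expect to be the main obstacle: one writes $u_\delta := (\lambda - A_\delta)^{-1}g$, derives a uniform $H^3$ bound on $u_\delta$ from the equation $(\lambda - \sigma(R_\delta v)R_\delta\Delta R_\delta)u_\delta = g$ together with the dissipativity estimate and elliptic regularity, extracts a weak-$H^3$ limit, and passes to the limit in the equation using $\sigma(R_\delta v)\to\sigma(v)$ uniformly and $R_\delta\to\mathrm{Id}$ strongly, identifying the limit as $(\lambda-A)^{-1}g$ by uniqueness; a standard subsequence argument upgrades this to full convergence. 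Once resolvent convergence on a dense set plus uniform stability are in hand, Trotter--Kato delivers the strong convergence of the semigroups, locally uniformly in $t$.
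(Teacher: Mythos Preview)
Your plan takes a genuinely different route from the paper: you work in the weighted inner product $\langle u,w\rangle_\sigma = \int \sigma(v)^{-1}(uw+\nabla u\cdot\nabla w)$, whereas the paper stays in the standard $H^1$ inner product throughout. For item~(1) the paper's choice is actually cleaner: integrating $\int \nabla(\sigma(v)\Delta u)\cdot\nabla u$ by parts the \emph{other} way gives $-\int\sigma(v)(\Delta u)^2$, so no derivative ever lands on $\sigma(v)$ and one obtains $m_0=(\sup\sigma)^2/(2\inf\sigma)$ depending only on $\sigma$, as the lemma asserts. Your weighted computation instead produces a cross term $\int\sigma^{-1}\sigma'(v)\,\nabla v\cdot\nabla u\,\Delta u$, which you can still absorb via Gagliardo--Nirenberg, but only with $m_0$ depending on $\|v\|_{H^2}$.

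There are two genuine gaps. In item~(2), your claim that ``the corrections \ldots\ are of the same structure as before'' is not right. The dissipativity of $A_\delta$ must hold for all $u\in H^1$ (not $H^3$), and your cross term becomes $\int\sigma^{-1}\sigma'(R_\delta v)\,\nabla R_\delta v\cdot R_\delta\Delta R_\delta u\cdot\nabla u$. Here $\nabla R_\delta v\in L^6$ and $R_\delta\Delta R_\delta u\in L^2$ force $\nabla u$ into $L^3$, but for $u\in H^1$ in $d=3$ you only have $\nabla u\in L^2$; splitting $\nabla u=\nabla R_\delta u-\delta\nabla\Delta R_\delta u$ does not help, since $\|\delta\nabla\Delta R_\delta u\|_{L^3}\sim\delta^{-1/4}\|u\|_{H^1}$. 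The paper handles (2) in the standard inner product, where the analogous manipulation produces instead the commutator $(\mathrm{Id}-\delta\Delta)[\sigma(R_\delta v),R_\delta]$ acting on $\Delta R_\delta u\in L^2_0$, and proves this is $O(\delta^{1/8})$ in operator norm; this estimate is the real content of (2) and your plan bypasses it without a working substitute. In item~(3), the asserted uniform $H^3$ bound on $u_\delta=(\lambda-A_\delta)^{-1}g$ is false: the dissipativity estimate only yields $\|\Delta R_\delta u_\delta\|_{L^2}$ bounded, hence $R_\delta u_\delta$ bounded in $H^2$ and $u_\delta$ merely bounded in $H^1$ (indeed $u_\delta-R_\delta u_\delta=-\delta\Delta R_\delta u_\delta$ is small in $L^2$ but not in $H^1$). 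With only weak $H^1$ compactness you can identify the limit, but Trotter--Kato needs \emph{strong} resolvent convergence in $H^1$; the paper obtains this by a separate argument showing $\|u_\delta\|_{H^1}\to\|u\|_{H^1}$, going back to the equation and using lower semicontinuity. Your ``standard subsequence argument'' does not cover this step.
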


\begin{proof}
(1) The operator $A$ is densely defined and closed. Indeed, let $Au_n=f_n \to f$ in $\mc H$ and $\{ u_n \} \subset H^3$, $u_n \to u$ in $\mc H$. Since $v \in H^2$ and $d\le 3$, it is easy to check that multiplication by $\sigma(v)^{-1}$ is a bounded operator on $\mc H$, hence $\Delta u_n =\sigma(v)^{-1} f_n \to \sigma(v)^{-1} f$ in $\mc H$. By elliptic regularity $\{ u_n\} \subset H^3$ is bounded, hence $u \in H^3$. In addition, $u_n \to u$ in $H^2$ and therefore $A u=f$ in $L^2$ and in turn in $\mc H$.

The rest of statement (1) follows from the Lumer-Phillips theorem \cite{Pazy} once we prove that there exists $m_0>0$ such that

a)  $\| ((m+m_0)\mathrm{Id} -A  ) u \| \ge m \| u \|$ for any $m>0$ and $u\in H^3$;

b) $A-(m+m_0) \mathrm{Id}$ is surjective for each $m>0$.

To prove a) it is clearly enough to show that $\langle( m_0 \mathrm{Id} -A) u, u\rangle \ge 0$ for some $m_0>0$ and any $u \in H^3$. We have,
\[
\begin{split}
\langle( m_0 \mathrm{Id} -A) u, u\rangle & = m_0 \|u\|^2 - \int\! \sigma(v) u \Delta u \,\rmd x + \int\! \sigma(v) \left(  \Delta u \right)^2 \, \rmd x \\ & \ge m_0 \| u \|^2 + (\inf \sigma) \int\! (\Delta u)^2 \,\rmd x - (\sup \sigma) \int |u| | \Delta u| \, \rmd x \\ & \ge \left(m_0-  \frac{(\sup \sigma)^2}{2(\inf \sigma) }\right)  \| u\|^2+ \frac12 (\inf \sigma)  \int\! (\Delta u)^2 \, \rmd x\;, 
\end{split} 
\]
where we used Young's inequality in the last step. Claim a) follows for $m_0$ large enough.

To prove b), for $\lambda \in [0,1]$ we consider the family of bounded operators $A^\lambda \colon H^3 \to \mc H$ defined by $A^\lambda u =-(m+m_0) u +\left(\lambda \sigma(v)+1-\lambda \right) \Delta u$. Notice that $[0,1] \ni \lambda \to A^\lambda \in \mc L (H^3; \mc H)$ is norm continuous and $A^0= -(m+m_0) \mathrm{Id} +\Delta$ is  surjective (actually a Banach space isomorphism). Thus, by the continuity method, claim b) follows once we prove that there exists  $c>0$ such that $\| u\|_{H^3}  \le c \| A^\lambda u\|$ for any $u\in H^3$ and $\lambda \in [0,1]$.
Arguing as in the last displayed formula we get,
\[ 
\left(m+m_0 - \frac{( \max \{1,\sup \sigma \})^2}{2 \min\{( 1, \inf \sigma) \}} \right)\| u\|^2+ \frac12  \min \{1, \inf \sigma \}  \int\! (\Delta u)^2 \, \rmd x   \le  \| A^\lambda u\| \|u\|\;, 
\]
and, by Young's inequality, $\| u\| \le c \| A^\lambda u\|$ for a suitable $c>0$ depending only on $m_0, m$ and $\sigma$. 

Finally, notice that $\Delta u =\left(\lambda \sigma(v)+1-\lambda \right)^{-1} \left( A^\lambda u   + (m+m_0) u \right) \in \mc H$, which together with the previous inequality gives $\| \Delta u\| \le C(m_0,m,\sigma,v) \| A^\lambda u + (m+m_0) u \| \le C^\prime(m_0,m,\sigma,v) \| A^\lambda u\|$ and the conclusion follows by elliptic regularity.  

\smallskip
(2) First we notice that $R_\delta \colon H^s \to H^{s+2}$ is continuous (a linear isomorphism), $\Delta R_\delta u =R_\delta \Delta u$, $\| R_\delta u \|_{H^s} \le \| u\|_{H^s}$, and $R_\delta u \to u$ in $H^s$ as $\delta \to 0$ for any $u\in H^s$ and $s \in \bb R$. Note also that $R_\delta v \in H^4 \subset C^2$ for $d=2,3$, hence $\sigma(R_\delta v) \in C^2$. Since $R_\delta \Delta R_\delta \colon \mc H \to H^3$ is bounded, $H^3 \hookrightarrow H^2$ is compact, and multiplication by $\sigma(R_\delta v)$ is  bounded on $H^2$ we see that $A_\delta \colon \mc H \to H^2$ is compact, hence $A_\delta \colon \mc H \to \mc H$ is a compact operator. Therefore, $A_\delta$ generates a uniformly continuous (semi)group of linear operators $S_\delta(t)$, $t\ge 0$. As in part (1) above, in view of the Lumer-Phillips theorem, the exponential estimate follows once we prove that there exists $m_0>0$ such that, for any $\delta>0$ small enough,

a)  $\| ((m+m_0)\mathrm{Id} -A_\delta  ) u \| \ge m \| u \|$ for any $m>0$ and $u\in H^3$;

b) $A_\delta-(m+m_0) \mathrm{Id}$ is surjective for each $m>0$.

To prove a), again it is enough to check that there exists $m_0>0$ such that for any $\delta>0$ small enough $\langle( m_0 \mathrm{Id} -A_\delta) u, u\rangle \ge 0$ for any $u \in H^3$. Integrating by parts,
\[
\langle( m_0 \mathrm{Id} -A_\delta) u, u\rangle = m_0 \| u\|^2 + I_\delta + I\!I_\delta\;,
\]
where
\[
I_\delta: = - \int\! \sigma(R_\delta v) u R_\delta \Delta R_\delta u  \, \rmd x \;, \qquad I\!I_\delta := \int\! \Delta u \, \sigma(R_\delta v) R_\delta \Delta R_\delta u \, \rmd x \:.
\]
Now, using Sobolev inequality,
\[
\begin{split}
I_\delta & = \int\! \left( \sigma(R_\delta v) \nabla u \nabla R^2_\delta u + u \sigma^\prime(R_\delta v) \nabla R_\delta v \nabla R^2_\delta u  \right) \, \rmd x \\ & \ge - \left( \sup \sigma + C \| \sigma' \|_\infty  \| v\|_{H^2} \right) \| u\|^2 \;,  
\end{split}
\]
which leads to chose $m_0=\sup \sigma + C \| \sigma' \|_\infty  \| v\|_{H^2}$. On the other hand, as $\mathrm{Id}=(\mathrm{Id}-\delta \Delta) R_\delta$ we have,
\[
\begin{split}
I\!I_\delta & = \int\! \sigma(R_\delta v) |\Delta R_\delta u|^2 \, \rmd x + \int\! \Delta R_\delta u  \big( \mathrm{Id} -\delta \Delta \big) \left[ \sigma(R_\delta v),  R_\delta \right]  \Delta R_\delta u  \, \rmd x \\ & \ge \Big( \inf \sigma - \left\| \big(\mathrm{Id}-\delta \Delta \big)  \left[\sigma(R_\delta v),  R_\delta \right] \right\|_{L^2_0 \to L^2}  \Big)\int\! |\Delta R_\delta u|^2 \, \rmd x \; ,
\end{split}
\]
since $\Delta R_\delta u \in L^2_0$, the closed subspace of functions with zero average, and claim a) follows once we show that $\left\| \big(\mathrm{Id}-\delta \Delta \big)  \left[ \sigma(R_\delta v),  R_\delta \right] \right\|_{L^2_0 \to L^2} = o(1) $ as $\delta \to 0$.

To estimate the commutator, notice that for $w \in L^2_0$, $g=R_\delta w$ and $f=R_\delta ( \sigma(R_\delta v) w)$, we have $f, g \in H^2$, $g -\delta \Delta g= w$, $f- \delta \Delta f= \sigma( R_\delta v) w$ and 
\[
\begin{split}
\big(\mathrm{Id}-\delta \Delta \big) & \left[\sigma(R_\delta v),  R_\delta \right] w = \big(\mathrm{Id}-\delta \Delta \big) (-f+\sigma(R_\delta v) g ) \\ & = - \delta \big(  g \Delta \sigma(R_\delta v) +2 \nabla g \nabla \sigma(R_\delta v) \big)  
\\ & = - \delta \big[ \sigma^{\prime} (R_\delta v)  g  R_\delta  \Delta v + \sigma^{\prime \prime}(R_\delta v ) g |\nabla R_\delta v|^2 +2 \sigma^{\prime} (R_\delta v) \nabla g \nabla R_\delta v \big] \; .
\end{split}
\]
If $\varphi \in L^2_0$ and $\psi=R_\delta \varphi \in H^2$ solves $\psi -\delta \Delta \psi= \varphi$, then $\int \psi^2 +\delta |\nabla \psi|^2 \le \| \varphi\|_{L^2} \| \psi\|_{L^2} $, whence $\| \nabla \psi\|_{L^2} \le \delta^{-1/2} \| \varphi\|_{L^2}$ and, by H\"older and Sobolev embedding $H^1 \hookrightarrow L^6$, we have $\| \psi\|_{L^p} \le C\delta^{(6-3p)/4p} \| \varphi\|_{L^2}$ for any $2\le p\le 6$. Analogously, $\int  \delta |\nabla \psi|^2 +\delta^2 |\Delta \psi|^2 \le  \delta \| \varphi\|_{L^2} \| \Delta \psi\|_{L^2} $, whence $\| \Delta \psi\|_{L^2} \le \delta^{-1} \| \varphi\|_{L^2}$ and by Holder, Sobolev  and Calderon-Zygmund inequalities we have $\| \nabla \psi\|_{L^p} \le C\delta^{(6-5p)/4p} \| \varphi\|_{L^2}$ for any $2\le p\le 6$. Applying these estimates to $\varphi=w$ and $\varphi=\Delta  v \in L^2_0$ we have,
\[ 
\left\| \big(\mathrm{Id}-\delta \Delta \big)  \left[  \sigma(R_\delta v),  R_\delta \right] w \right\|_{L^2} \le   \delta \| \sigma\|_{C^2} \left( \| g\|_{L^4} \|R_\delta \Delta v \|_{L^4} +\| g\|_{L^6} \| \nabla R_\delta v \|_{L^6}^2 \right. \]
\[ \left. + \| \nabla g\|_{L^4} \| \nabla R_\delta v\|_{L^4} \right) \le C \delta^{1/8} \| \sigma\|_{C^2} (1+ \| v\|_{H^2}^2) \| w\|_{L^2}\;,
\]
so that $\left\| \big(\mathrm{Id}-\delta \Delta \big)  \left[  \sigma(R_\delta v),  R_\delta \right]  \right\|_{L^2_0 \to L^2 } \leq C\delta^{1/8}$ and the claim follows.
 
To prove b), notice that $A_\delta$ is a compact operator, hence $A_\delta-(m+m_0) \mathrm{Id}$ is Fredholm of index zero. Since it is injective by part a), then it is surjective. 

(3) Concerning the first statement, we notice that $\lim_{\delta \to 0} A_\delta u=Au$ for any $u \in H^3$ because $R_\delta \Delta R_\delta u \to \Delta u$ in $H^1$, $\sigma(R_\delta v) \to \sigma(v)$ in $H^2$ and the product is jointly continuous for $d=2,3$. Conversely, suppose $\lim_{\delta \to 0} A_\delta u$ exists for some $u \in \mc H$, we claim that $u \in H^3$ and the limit is $Au$. To see this, notice that $A_\delta u$ is bounded in $\mc H$, hence $R_\delta \Delta R_\delta u=\sigma(R_\delta v)^{-1} A_\delta u$ is also bounded in $\mc H$, which implies $\Delta u \in \mc H$, where the Laplacian is taken in the sense of distributions. Then $u \in H^3$ by elliptic regularity and the conclusion follows from the initial observation.

To finish the proof it is enough to apply \cite[Thm.\ 5.2]{T} to infer convergence of semigroups from convergence of the corresponding resolvent operators at some common point. Fix $m_0>0$ as in part (1) and (2) above and $m>0$ so that both $(m+m_0) \mathrm{Id}-A$ and $(m+m_0) \mathrm{Id}-A_\delta$ are injective on their respective domains and onto. We claim that $ u_\delta:= \left((m+m_0) \mathrm{Id}-A_\delta \right)^{-1}f  \to  \left((m+m_0) \mathrm{Id}-A\right)^{-1}f=:u$ as $\delta \to 0$ for any $f \in \mc H$. 
Indeed, by definition $(m+m_0) u_\delta- A_\delta u_\delta =f$ and in view of the dissipativity inequality (part (2)  of the proof, claim a)), we have $m \| u_\delta\| \le \| f \|$ but indeed even $\| \Delta R_\delta u_\delta \|_{L^2} \le C(m,m_0, \| v\|_{H^2})\| f \|$ for $\delta>0$ small enough. Thus, by elliptic regularity $R_\delta u_\delta $ is bounded in $H^2$ and, up to subsequences, $R_\delta u_\delta \to \bar{u}$  strongly in $H^1$ and weakly in $H^2$ as $\delta \to 0$ for some $\bar u \in H^2$ possibly depending on the subsequence. Observe that $u_\delta-R_\delta u_\delta=\delta \Delta R_\delta u_\delta \to 0$ in $L^2$, hence $u_\delta \rightharpoonup \bar{u}$ weakly in $H^1$. Since $\Delta R_\delta R_\delta u_\delta= \sigma(R_\delta v)^{-1}  \left( (m+m_0)u_\delta -f \right)$ is also bounded in $\mc H$, hence $R_\delta R_\delta u_\delta$ is bounded in $H^3$ by elliptic regularity, $R_\delta R_\delta u_\delta \to \bar{u}$ in $H^1$, which in turn gives $\bar{u} \in H^3$ and $R_\delta \Delta R_\delta u_\delta \rightharpoonup \Delta u$ weakly in $H^1$.

Since $H^2 \hookrightarrow C^0$ for $d=2,3$ and $\sigma(R_\delta v) \to \sigma(v)$ uniformly, taking $L^2$ scalar product of the equation for $u_\delta$ with some $g\in L^2$, as $\delta \to 0$ we get $\langle (m+m_0) \bar{u}- A \bar{u}, g\rangle_{L^2} =\langle f,g\rangle_{L^2}$, which in turn gives $(m+m_0) \bar{u}- A \bar{u} =f$ because $g$ is arbitrary. By injectivity, $\bar{u}=u$ is independent of the chosen subsequence and so far we get $u_\delta \rightharpoonup u$ weakly in $\mc H$ and the proof is complete once we show that $\| u_\delta \| \to \| u \|$ as $\delta \to 0$.
 In order to conclude, we argue as in part (2) above and and we write,
\[
\begin{split}
& (m_0+m)\| u_\delta\|^2 + \int\! \sigma(R_\delta v) |\Delta R_\delta u_\delta|^2 \, \rmd x = \langle f,u_\delta \rangle \\ & \qquad + \int\! \sigma(R_\delta v)u_\delta R_\delta \Delta R_\delta u \, \rmd x - \int\! \Delta R_\delta u_\delta \big( \mathrm{Id} -\delta \Delta \big)  \left[ \sigma(R_\delta v),  R_\delta \right]  \Delta R_\delta u_\delta\, \rmd x \;.
\end{split}
\]
Since $\left\| \big(\mathrm{Id}-\delta \Delta \big)  \left[  \sigma(R_\delta v),  R_\delta \right] \right\|_{L^2_0 \to L^2} =o(1) $ as $\delta \to 0$, the previous convergence properties yields,
\[
\begin{split}
& \varlimsup_{\delta \to 0} \Big[ (m_0+m)\| u_\delta\|^2 +\int \sigma(R_\delta v) |\Delta R_\delta u_\delta|^2 \, \rmd x \Big] = \langle f,u_\delta \rangle + \int\! \sigma(v)u  \Delta  u \, \rmd x \\ & \qquad = (m_0+m)\| u \|^2 + \int\! \sigma(v) |\Delta u|^2 \, \rmd x\;.
\end{split}
\]
On the other hand, by $L^2$-weak lower semicontinuity,
\[
\begin{split}
& \varlimsup_{\delta \to 0}  (m_0+m)\| u_\delta\|^2 +\int\! \sigma( v) |\Delta u|^2 \, \rmd x \le \varlimsup_{\delta \to 0}  (m_0+m)\| u_\delta\|^2 \\ & \quad +\varliminf_{\delta\to 0} \int\! \sigma(R_\delta v) |\Delta R_\delta u_\delta|^2 \, \rmd x \le \varlimsup_{\delta \to 0} \big[ (m_0+m)\| u_\delta\|^2 + \int\! \sigma(R_\delta v) |\Delta R_\delta u_\delta|^2 \, \rmd x \big]\;,
\end{split}
\]
hence $\varlimsup_{\delta \to 0}\| u_\delta\|^2 \le \| u\|^2$ and therefore $\| u_\delta\| \to \| u\|$ as $\delta \to 0$ as claimed.
\end{proof}

\medskip

\subsection*{Acknowledgements}
We thank M.\ Mariani for useful discussions.

\end{document}